\newcommand{\E}{\mathbb{E}}
\newcommand{\N}{\mathbb{N}}
\renewcommand{\P}{\mathbb{P}}
\newcommand{\R}{\mathbb{R}}
\newcommand{\cB}{\mathcal{B}}
\newcommand{\cF}{\mathcal{F}}
\newcommand{\cO}{\mathcal{O}}
\newcommand{\Ew}[1]{\mathbb{E}\left[#1\right]}
\renewcommand{\Pr}[1]{\mathbb P\left(#1\right)}
\newcommand{\norm}[1]{\lVert #1 \rVert}
\newcommand{\abs}[1]{\lvert #1 \rvert}
\newcommand{\as}{\text{ almost surely}}
\newcommand{\io}{\text{ i.o.}}
\newcommand{\lest}{\le_{\text{st}}}
\newcommand{\convh}{\overline{\text{co}}}
\newcommand{\interior}[1]{%
  {\kern0pt#1}^{\mathrm{o}}%
}
\theoremstyle{plain}
\newtheorem{theorem}{Theorem}
\newtheorem{lemma}{Lemma}
\newtheorem{corollary}{Corollary}
\newtheorem*{claim*}{Claim}
\theoremstyle{remark}
\newtheorem{assumption}{Assumption}[section]
\newtheorem{definition}{Definition}
\newtheorem{remark}{Remark}
\declaretheorem[style=definition]{example}
\renewcommand\thmcontinues[1]{Continued}
\title{Stability and Convergence of Distributed Stochastic Approximations with large Unbounded Stochastic Information Delays}
\author{Adrian Redder, Arunselvan Ramaswamy, Holger Karl}
\begin{document}

\maketitle

\begin{abstract}
    We generalize the Borkar-Meyn stability Theorem (BMT) to distributed stochastic approximations (SAs) with information delays that possess an arbitrary moment bound. To model the delays, we introduce Age of Information Processes (AoIPs): stochastic processes on the non-negative integers with a unit growth property. We show that AoIPs with an arbitrary moment bound cannot exceed any fraction of time infinitely often. 
In combination with a suitably chosen stepsize, this property turns out to be sufficient for the stability of distributed SAs. 
Compared to the BMT, our analysis requires crucial modifications and a new line of argument to handle the SA errors caused by AoI. In our analysis, we show that these SA errors satisfy a recursive inequality. To evaluate this recursion, we propose a new Gronwall-type inequality for time-varying lower limits of summations. As applications to our distributed BMT, we discuss distributed gradient-based optimization and a new approach to analyzing SAs with momentum.
\end{abstract}



\section{Introduction}

Stochastic approximations (SAs) have been of renewed interest 
since the seminal contributions by Robbins \& Monro \cite{robbins1951stochastic} and Kiefer \& Wolfowitz \cite{kiefer1952stochastic}  due to various applications in signal processing, economics, game theory, machine learning, and optimization \cite{benveniste2012adaptive, bravo2016adjusted,lei2020synchronous,s2013stochastic, uryasev2013stochastic,ghadimi2012optimal}. Traditionally, SA is a centralized paradigm implemented as a single iteration. Centralized paradigms, however, typically suffer from computational bottlenecks or are infeasible due to the decentralized nature of a problem. Therefore, distributed asynchronous SA algorithms have been developed where multiple systems/nodes/agents interact with each other to solve a global SA problem. Such distributed asynchronous parallel implementations of SA algorithms were first considered for stochastic gradient-based methods \cite{tsitsiklis1986distributed}. An extensive collection of distributed parallel algorithms can be found in \cite{bertsekas2015parallel}. 

Distributed SA refers to algorithms that are executed via computer networks and are thus affected by communication delays. Asynchronous SA traditionally refers to iterations that run with different clocks, such that at every time step only some iterations are updated \cite{borkar1998asynchronous}. Recently, asynchronous SA has also been used to refer to algorithms where a global variable is updated by a set of workers (computing nodes). In these asynchronous computing scenarios, asynchronous updates lead to information delays from the perspective of the global variable iteration \cite{lian2015asynchronous}. 
The advantage of this approach is the potentially significantly enhanced convergence speed \cite{zhang2013asynchronous}. The problem is that the resulting iteration is affected by errors due to the information delays, which can lead to loss of convergence guarantees or even instability.

Beyond computing scenarios, information delays arise in distributed SA scenarios due to the decentralized nature of physical multi-agent systems. Such systems usually have to run distributed algorithms under limited communication resources, for instance, remote battery-powered wireless sensor networks \cite{he2020distributed}. In the most extreme scenario, such systems must withhold the exchange of information for as long as possible  to minimize their power consumption. The natural question here is how little information sharing can be allowed so that distributed systems can solve SA problems in a decentralized manner.

The dominant feature in the aforementioned computing and optimization scenarios is that a set of variables is updated as a function of old values of itself. It therefore follows that the resulting iterations are affected by \emph{Age-of-Information (AoI)}. \emph{With this work, we intend to close a chapter on SA algorithms, namely, distributed SAs with large unbounded stochastic information delays.} Specifically, we give sufficient conditions for the stability and convergence of distributed SAs in the presence of AoI.
As an illustration, consider the following distributed iteration $x_n \coloneqq (x^1_n, \ldots x^D_n)$ in 
$\R^d\coloneqq\R^{d_1} \times \ldots \times \R^{d_D}$:
\begin{equation}
\label{eq: main_iteration_example}
   x^i_{n+1} = x^i_n + a(n) f^i(x^1_{n-\tau_{i1}(n)}, \ldots, x^D_{n-\tau_{iD}(n)}, \xi^i_n), \quad n\ge1, \quad (1\le i \le D),
\end{equation}
where  
\begin{enumerate}
    \item $\tau_{ij}(n)$ are delay/AoI random variables that incur since iteration $i$ uses the value of iteration $j$ from the old time step $n-\tau_{ij}(n)$ to evaluate its local drift/dynamics $f^i$ at time $n$.  
    \item $\xi^i_n$ are random samples from a sample space $\Xi$.
    \item $\{a(n)\}$ is a sequence of positive numbers referred to as the algorithm stepsize.
\end{enumerate}
Under very mild conditions on the drift functions $f^i$ 
we will show that \eqref{eq: main_iteration_example} is stable and converges almost surely to compact connected invariant set of the ODE $ \dot{x}(t) = \E_{\xi}[f(x(t),\xi)]$ provided there exists \emph{an arbitrary} $p>0$, such that $\sup_{n\ge 0} \Ew{\tau^p_{ij}(n)} < \infty$.
As of now, this was only known if either (a) $x_n$ is prematurely assumed to be stable almost surely and the above condition holds for at least $p>1$ \cite[Section 6]{borkar2009stochastic}, or (b) the AoI variables $\tau_{ij}(n)$ are almost surely bounded \cite{bhatnagar2011borkar}. 

\subsection{Main Contribution}

The main contribution of this paper is the stability and convergence of distributed SA algorithms in the presence of AoI. The iterations that we consider are of the following form:
\begin{equation}
    \label{eq: main_rewritten_intro}
    x_{n+1} = x_n + a(n)\left[h(x_n) + e_n + M_{n+1} \right], \quad n\ge1,
\end{equation}
where $h$ is a function that characterizes the mean algorithm dynamics/drift (e.g. $h(x) = \E_{\xi}[f(x,\xi)]$ for iteration \eqref{eq: main_iteration_example}), $e_n$ is a sequence of drift errors due to the AoI $\tau_{ij}(n)$ and $M_{n+1}$ is a martingale difference noise sequence due to the use of samples. The standard procedure to analyze such iterations is to first establish (or assume) its stability, i.e. that $\sup_{n\ge0} \norm{x_n} < \infty$. Then one verifies that the iteration converges assuming stability.
There are various schemes to establish the stability of SA algorithms, see \cite{benaim1996dynamical}, \cite[Chapter 4]{borkar2009stochastic} and the reference therein. One of the most remarkable schemes is the stability through scaling approach proposed in \cite{borkarmeyn2000ode}, which is now known as the Borkar-Meyn Theorem (BMT). The stability is shown by studying a family of ordinary differential equations (ODEs) with scaled dynamics $\frac{h(c x)}{c}$ for $c\in [1,\infty]$. This scheme is attractive as its assumptions can be verified solely using the algorithm drift $h$. Other schemes in the literature are often problem-specific and are, for instance, based on an available Lyapunov function \cite{harold1997stochastic}. 

The BMT considers iteration \eqref{eq: main_rewritten_intro} with zero drift errors, i.e. with  $e_n = 0$. Further, it is  straightforward to show that the BMT also holds when $e_n$ is a deterministic or random bounded sequence in $o(1)$. \emph{On the other hand, it is very far from obvious that the drift errors $e_n$ caused by AoI converge to zero almost surely when $x_n$ is not assumed stable.} Our main contribution is a distributed BMT, \Cref{thm: main_distributed}, for exactly such scenarios. Our derivation carefully accounts for the errors due to AoI. Two key contributions, also of independent interest, are required to show the distributed BMT:
\begin{enumerate} 
    \item We introduce Age-of-Information Processes (AoIPs) to model the information delay processes $\tau_{ij}(n)$. An AoIP is a stochastic process on the non-negative integers with a unit growth property. Delay processes that capture the newest available information from a source at a monitor are naturally AoIPs. We show asymptotic growth properties for AoIPs as a function of suitable moment bounds in \Cref{lem: aoi_bound}. 
    With this, we show in \Cref{lem: step_size_AoI_sum} that the SA stepsize accumulated over intervals with AoI length $\tau_{ij}(n)$ converges to zero almost surely, i.e. that $\sum_{k=n-\tau_{ij}(n)}^{n-1} a(k)  \to 0 \as$. This convergence property, which relates the AoIPs and the SA stepsize decay, is identified as the key sufficient condition to establish the stability and almost sure convergence of distributed SAs.

    \item We propose a new Gronwall-type inequality, \Cref{lem: new_gronwall}, to bound iterations that satisfy a linear recurrence inequality with varying lower time horizons. We identified that the SA errors due to AoI satisfy such recursive inequalities both in norm as well as in $L_2$, these inequalities are then evaluated using the new Gronwall-type inequality.
\end{enumerate}
In addition to these components, our distributed BMT requires crucial modifications compared to the traditional BMT to handle the algorithm drift errors caused by AoI. With the distributed BMT, we provide for the first time a set of sufficient conditions for the stability and convergence of distributed SAs with information delays that merely possess an arbitrary uniform moment bound.


\subsection{Further Results}
Beyond our main stability theorem, our analysis has also led to new insights into the traditional BMT. 
Our new line of argument for the distributed BMT applies to the original BMT and has revealed how to weaken a key assumption in the original BMT: the original BMT requires that $\frac{h(c x)}{c}$ converges pointwise to some limit $h_\infty(x)$ as $c \to \infty$, where $h_\infty$ is globally asymptotically stable to the origin. We show that it is merely required that a scaling sequence $c_n \nearrow \infty$ exists such that $\lim\limits_{n \to \infty} \frac{h(c_{n}x)}{c_{n}}$ is globally asymptotically stable to the origin (\Cref{thm: genBMT}). Previously, the limit needed to exist for any scaling sequence drifting to infinity.

Finally, beyond natural applications to distributed optimization (\Cref{sec: app1}), our tools also enable the study of stochastic approximations with momentum (\Cref{sec: app2}). Consider the following SA iteration with Polyak's heavy ball momentum:
\begin{equation}
\begin{split}
     \label{eq: momentum_intro}
    x_{n+1} &= x_n + a(n) m_k \\
    m_k &= \beta m_{k-1} + (1-\beta)g(x_k)
\end{split} 
\end{equation}
with $m_0 = 0$, $\beta \in [0,1)$ and $g(x_k) \coloneqq h(x_k) + M_{k+1} $. This iteration has been extensively for stochastic gradient descent with momentum, but in general only for specific SA iterations or for linear SA iterations, whereby always a momentum parameter $\beta_n \nearrow \infty$ has been chosen. See \Cref{sec: app2} for a discussion of the relevant related work on SAs with momentum.

We observed that \eqref{eq: momentum_intro} can be studied by splitting the moving average of the past drift terms into two contributions ``new'' and ``old'' drift terms. Specifically,   \eqref{eq: momentum_intro} can be written in moving average form as 
\begin{equation}
    x_{n+1} = x_n + a(n) (1-\beta)\left[\sum_{i=1}^n \beta^{n-i} g(x_i) \right]
\end{equation}
Now define a deterministic AoI sequence $\tau(n) \coloneqq \lceil \frac{n}{\log(n+1)}\rceil$ and split the above summation into two components:
\begin{equation}
\label{eq: momentum_intro_rewritten}
    x_{n+1} = x_n + a(n) (1-\beta)\left[\sum_{i=n-\tau(n)}^n \beta^{n-i} g(x_i) \right] +  a(n) (1-\beta)\left[\sum_{i=1}^{n-\tau(n)-1}\beta^{n-i} g(x_i) \right].
\end{equation}
Under standard assumptions for the drift $h$ and the martingale difference noise $M_{n+1}$, we will show that the second summation, which averages ``old'' drifts, is in $o(1)$. The resulting iteration can then be studied as an iteration affected by AoI along the lines of our distributed BMT. Specifically, we will conclude that the BMT also holds for heavy-ball stochastic approximations \eqref{eq: momentum_intro} and we can therefore provide sufficient conditions for stability and convergence of SAs with heavy-ball momentum (\Cref{thm: momentum}). This is a new result for general SA iteration. See \Cref{sec: app2} for details.

\subsection{Related Work}

If not otherwise stated, all of the following works consider decaying stepsizes that satisfy the Robbins and Monro conditions, i.e. the stepsizes are not summable but square summable.

One of the earliest work on asynchronous distributed SA date back to \cite{bertsekas1982distributed}, where an abstract dynamic programming approach was proposed in the presence of bounded communication and computing delays. In \cite{tsitsiklis1994asynchronous} the first asynchronous distributed SA algorithm for settings with potentially unbounded delay was proposed. The assumptions are tailored towards Q-learning and it was required that all delays satisfy that $n-\tau_{ij}(n) \to \infty$ almost surely. The first asynchronous distributed SA of the form \eqref{eq: main_iteration_example}, which from today's point of view is in the standard form of an SA iteration, was considered in \cite{borkar1998asynchronous}. Here, almost sure convergence is shown assuming stability and that the delays satisfy, for some $p>0$,
\begin{equation}
\label{eq: conditional_moment_bound}
    \sup_{n\ge 0} \Ew{\tau^p_{ij}(n) \mid \tau_{ij}(k),  k \le n-1} < \infty.
\end{equation}
This assumption is quite restrictive compared to the unconditional version assumed in this paper. For example, even a simple random walk with resets on the positive integers, \cite{redder22allerton}, will not satisfy \eqref{eq: conditional_moment_bound}. This is because \eqref{eq: conditional_moment_bound} basically requires a delay moment bound independent of the delay at the previous time step. In \cite[Chapter 6]{borkar2009stochastic} it was then shown that $\sup_{n\ge 0} \Ew{\tau^p_{ij}(n)} < \infty$ for some $p>1$ is sufficient for almost sure convergence to an equilibrium, still assuming stability. 
In \cite{bhatnagar2011borkar} the stability of general asynchronous SAs was then considered for bounded delays. Specifically, a version of the BMT was shown to hold under bounded delays and slightly stronger martingale noise assumptions than typical. We will not require any of those restrictions. 
The key insights that allow us to make this progress are the disclosure of 1. the crucial interplay between the algorithm AoI and algorithm stepsize, and 2. the recursive structure of the SA drift errors caused by AoI. Finally, it should be noted that the analysis in \cite{bhatnagar2011borkar} also considers the aforementioned traditional asynchronous updates, which can be included in our framework under the assumptions presented therein.

Beyond general stochastic approximation iterations, there have been several works on distributed gradient-based methods with delays. Distributed SGD with bounded information delays were first considered in the seminal work of \cite{tsitsiklis1986distributed}. Here it was sketched for the first time that delays may be allowed to grow sublinearly relative to a global clock when a sufficiently rapid decaying stepsize is chosen. 
Finite time error bounds for asynchronous SGD algorithms under convex stochastic objectives, constant stepsizes and bounded delays were proposed in \cite{agarwal2011distributed} and \cite{feyzmahdavian2016asynchronous}. Finite time bounds for the mean square variation of the mean gradient of SGD under a time-varying stepsize were proposed in
\cite{lian2015asynchronous} for general non-convex objectives and bounded delays.
Almost sure convergence of SGD to stationary points under merely locally Lipschitz continuous gradients with noise-dependent Lipschitz constants was proven in \cite{ramaswamy2021distributed}. However, stability was assumed as well as the delay conditions proposed in \cite[Chapter 6]{borkar2009stochastic}.

As far as delays are concerned the closest to our work is \cite{zhou2022distributed}. The delays considered therein are potentially large and unbounded but are assumed as deterministic. We claim that it is more representative to work with stochastic delays. However, stochastic delays lead to SA error bounds that are summations over random time horizons, see \Cref{sec: main_results}.

The algorithm considered in \cite{zhou2022distributed} is asynchronous SGD, i.e. multiple workers are computing updates for a single global variable. The authors focus on two scenarios, general non-convex objectives and variational coherent objectives. 
The second scenario is not the scope of our paper, and we instead compare it to the first scenario, which considers asynchronous SGD without projections for an unconstrained non-convex optimization problem with objective $h(x) \coloneqq \E_\xi[f(x;\xi)]$. The paper shows that $\lim\limits_{n\to \infty} \Ew{\norm{\nabla_x h(x_n)}^2} = 0$, i.e. that the gradient converges in mean square, where here the expectation is with respect to $x_n$. For the same scenario, we provide conditions that show that $ \norm{\nabla_xh(x_n)} \to 0 \as$. This provides a stronger characterization for every individual trajectory of the stochastic iteration and shows that practically every instantiation of the algorithm converges to a critical point. Furthermore, our analysis holds under different assumptions. Both analyses require that $\E_\xi[\nabla_xf(x;\xi)]$ is Lipschitz continuous. The analysis in \cite{zhou2022distributed} requires that $\sup_{x \in \R^d} \E_\xi[\norm{\nabla_x f(x; \xi)}^2] < \infty$, which implies that  $\sup_{x\in\R^d} \norm{\nabla_x h(x)} < \infty$. We only require that $\E_\xi[\norm{\nabla_x f(x; \xi)}^2] \le K(1+\norm{x}^2$)  for all $x\in \R^d$ for some $K>0$ and we thus even allow the objective gradient $\nabla_x h(x)$ to be unbounded. However, to apply our distributed BMT we require in addition that $h$ basically acts like a convex function when scaled to infinity. This growth condition is naturally satisfied for quadratic objectives and can be easily guaranteed by $L_2$ regularization. Our analysis, therefore, covers for the first time the stability and convergence of SGD for a class of quadratic stochastic objectives in the presence of large unbounded stochastic delays. We will discuss the details to this application in \Cref{sec: app1}.

\subsection{Methods}


Our stability analysis is inspired by the traditional BMT, i.e. the stability is obtained through a scaling limit of the SA dynamics $h$. 
We highlight all key differences in \Cref{sec: outline}, where we outline our stability analysis.
Both stability and convergence are obtained through ODE techniques \cite{benaim1996dynamical}, i.e. discrete iterations are shown to closely track solutions to certain ODEs.  We would like to point out that it was mentioned in \cite{zhou2022distributed} that the almost sure convergence of stochastic approximations with unbounded delays might be difficult to achieve via an ODE approach (as to be presented in this work). The reason for this is that it was hypothesized that without the assumption of stability, the large delays would separate the SA iteration from its continuous-time counterpart. As we will see, the convergence property $\sum_{k=n-\tau_{ij}(n)}^{n-1} a(k)  \to 0 \as$, relating delays and algorithm stepsize, is sufficient to connect these two worlds. We can therefore provide for the first time sufficient conditions for the convergence of distributed stochastic approximations with unbounded delays assuming neither that the iterations are stable nor that the iterations have bounded dynamics.


\section{Problem Setup and Main Results}
\label{sec: setup}

Throughout this paper, we focus on the following generalization of iteration \eqref{eq: main_iteration_example}:
\begin{equation}
\label{eq: main_iteration}
   x^i_{n+1} = x^i_n + a(n) \left[ h^i(x^1_{n-\tau_{i1}(n)}, \ldots, x^D_{n-\tau_{iD}(n)}) + M^i_{n+1}\right], \quad n\ge1, \quad (1\le i \le D),
\end{equation}
where $h^i: \R^d \to \R^{d_i}$ and $M^i_{n+1}$ are the local drift functions and local additive noise terms of iteration $i$, respectively. Further, $\tau_{ij}(n)$ are delay random variables that incur since iteration $i$ uses the iteration value of iteration $j$ from time $n-\tau_{ij}(n)$ to evaluate its drift term at time $n$.
In standard terminology, \eqref{eq: main_iteration} is called a distributed stochastic approximation scheme. If the errors 
\begin{equation}
    \label{eq: local_drift_errors}
    e^i_n \coloneqq h^i(x^1_{n-\tau_{i1}(n)}, \ldots, x^D_{n-\tau_{iD}(n)}) -h^i(x^1_n,\ldots ,x^D_n)
\end{equation}
satisfy that $e^i_n \in o(1)$ almost surely, then we expect that \eqref{eq: main_iteration} tracks solutions to the ODE
\begin{equation}
    \label{eq: ODE_method}
    \dot{x}(t) = h(x(t))
\end{equation}
under suitable assumptions on $a(n)$, $h$ and $M^i_{n+1}$. This is the dynamical systems perspective (also known as the ODE method) of SA. The standard regularity assumption to ensure that the ODE \eqref{eq: ODE_method} is well-posed is that $h$ satisfies a Lipschitz condition, which we will make for the rest of the paper:
\begin{assumption}
\label{asm: lipschitz}
    $h^i : \R^d \to \R^{d^i}$ is the $i$-th component of a Lipschitz-continuous map $h: \R^d \to  \R^d$  with Lipschitz constant $L>0$.
\end{assumption}
The Lipschitz condition will play a crucial role to establish the stability and convergence of \eqref{eq: main_iteration} in the presence of the errors $e^i_n$. In addition, we require that rescaled versions of the ODE \eqref{eq: ODE_method} converge to an ODE with a globally asymptotically stable equilibrium.
\begin{assumption}
\label{asm: BMT}
    The functions $h_c(x) \coloneqq \frac{h(cx)}{c}, c\ge 1$, $x\in \R^d$, satisfy $h_c(x) \to h_\infty(x)$ pointwise as $c\to \infty$ for some $h_\infty \in C(\R^d)$. Furthermore, the ODE
    \begin{equation}
    \label{eq: c_infty_ODE}
        \dot{x}(t) = h_\infty(x(t)) 
    \end{equation}
    has the origin as its unique globally asymptotically stable equilibrium.
\end{assumption}
\Cref{asm: BMT} originates from the traditional Borkar-Meyn Theorem (BMT), see \cite{borkarmeyn2000ode} or \cite[Chapter 3]{borkar2009stochastic}. As mentioned before, it is one of our main contributions to extend the BMT to a distributed setting with unbounded stochastic delays. Moreover, we will present a weaker version of \Cref{asm: BMT} in \Cref{sec: genBMT} inspired by the analysis of our distributed BMT.

For the additive noise terms $M^i_{n+1}$, we make another standard assumption from the SA literature rephrased to suit the distributed SA setting:
\begin{assumption}
    \label{asm: noise}
    $M^i_{n+1} \in \R^{d^i}$ is the $i$-th component of a martingale difference noise process $\{M_n\}_{n\ge1}$ in $\R^d$ with respect to the filtration 
    \begin{equation}
        \cF_1 = \sigma(x_1),\ \cF_n \coloneqq \sigma\left(x_1, M_2, \ldots M_n, \tau_{ij}(1), \ldots \tau_{ij}(n), 1\le i,j \le D\right),\quad n\ge2:
    \end{equation}
    \begin{enumerate}
        \item $\Ew{M^i_{n+1} \mid \cF_n} = 0$.
        \item $\Ew{\norm{M^i_{n+1}}^2 \mid \cF_n} \le K^2\left(1 + \norm{(x^1_{n- \tau_{i1}(n)}, \ldots, x^D_{n-\tau_{iD}(n)})}^2 \right)$ for some $K>0$.
    \end{enumerate}
\end{assumption}
\begin{remark}
    \Cref{asm: noise} bounds the conditional second moment of the martingale difference noise components at time $n\ge 1$ based on the associated iteration values in \eqref{eq: main_iteration} that are used to evaluate the drift components at time $n\ge1$. This appears to be the most useful version since the noise is often a function of the iteration value used at the corresponding time step. A more general version that might also be of use is \Cref{sec: app2}:
    \begin{equation}
        \label{eq: weaker_martingale}
        \Ew{\norm{M^i_{n+1}}^2 \mid \cF_n} \le K^2\left(1 + \sup_{0 \le k_{ij} \le \tau_{ij}(n)}\norm{(x^1_{n- k_{i1} }, \ldots, x^D_{n-k_{iD}})}^2 \right) \text{ for some } K>0.
    \end{equation}
    Our analysis, notably \Cref{lem: error_L2_bound}, \Cref{lem: main_l2_bound} and \Cref{lem: martingale_convergence} hold with minor modifications with \eqref{eq: weaker_martingale} instead of \Cref{asm: noise}.2. We use \Cref{asm: noise}.2 to simplify the presentation. 
\end{remark}
Finally, we will now state our assumptions for the stepsize sequence $a(n)$ and the age processes $\tau_{ij}(n)$. These processes can be the consequence of various transport phenomena that result in the use of aged information $x^j_{n-\tau_{ij}(n)}$. We call these random variables aged since they are old versions of the current variables $x^j_n$. To analyze the $\tau_{ij}(n)$'s, we define what we call an age of information process. 

\subsection{Age of Information Processes}

 \begin{definition}
\label{def:AoI_process}
An \textbf{Age of Information Process (AoIP)} is a discrete-time stochastic process $\tau(n)$ on the non-negative integers that is associated with a system containing two components: an information stream, often modeled as a stochastic process $X=\{X_n\}_{n\ge1}$, and a monitor that processes this information stream. The AoIP $\tau(n)$ captures the age of the 
information from $X$ that is available to the monitor for further processing at time $n$. Specifically, when samples $X_{m(1)}, \ldots, X_{m(n)}$ from time steps $0 \le m(1) < \ldots < m(n) \le n$ are available to the monitor at time $n$, then the AoI at time $n$ is defined as
\begin{equation}
    \tau(n) \coloneqq n - m(n).
\end{equation}
By construction an AoIP has two natural properties: 1. $\tau(n+1) \le \tau(n) + 1$ (unit growth property) and
2. $n - \tau(n)$ is monotonically increasing.
\end{definition}

In distributed multi-agent systems each agent is usually in correspondence with a local process $X^i$ for every agent~$i$. One can now naturally associate two AoIPs $\tau_{ij}(n)$ and $\tau_{ji}(n)$ with every pair of agents $(i,j)$.
Note that $\tau_{ii}(n) = 0$, $1\le i \le D$, is typical for communication scenarios, but it is not necessary. 

We now state our last two assumptions, for which we use the notion of stochastic dominance.
\begin{definition}
\label{def:stochastic_dominance}
A non-negative random variable $\tau$ is said to be \emph{stochastically dominated} by a random variable $\overline{\tau}$, denoted by $\tau \lest \overline{\tau}$, if  $\Pr{\overline{\tau} > m} \ge \Pr{\tau > m } $ for all $m\ge0$.
\end{definition}
\begin{assumption}
\label{asm: AoI}
    $\tau_{ij}(n)$ are AoIPs, such that there exist some $p>0$ and a random variable $\overline{\tau}$ with $\tau_{ij}(n) \lest \overline{\tau}$, for all $n\ge1$ and $1\le i,j \le  D$,  such that
    $\Ew{\overline{\tau}^p} < \infty.$
\end{assumption}
\begin{assumption}
\label{asm: stepsize}
    $a(n) > 0$ for all $n\ge1 $, is a stepsize sequence, such that
    \begin{enumerate}
        \item $a(n) = \frac{a}{n}$, $n\ge1$, if $p \in (0,1]$
        \item $a(n) = an^{-\frac{1}{q}}$, $n\ge1$, with $1 < q < \min\{2,p\}$, if $p> 1$.  
    \end{enumerate}
    for some $a>0$ and with $p >0$ from \Cref{asm: AoI}.
\end{assumption}
\Cref{asm: AoI} and \Cref{asm: stepsize} formulate a trade-off between the AoIP moments and the decay of the stepsize sequence $a(n)$; a faster uniform tail decay of the AoI distributions allows slower decaying stepsize, thence potential faster convergence. Specifically, if \Cref{asm: AoI} holds with $p>1$ then stepsizes that decay slower than $\frac{1}{n}$ become available. \emph{Notably, the standard stepsize $a(n) = \frac{1}{n}$, $n \ge 1$, turns out to be sufficient for stability and convergence of \eqref{eq: main_iteration} under an arbitrary AoIP moment bound and most importantly for $p\in (0,1]$.} This was not known before this work. 

\begin{remark}
    \Cref{asm: stepsize} guarantees that the stepsize $a(n)$ is not summable but square summable, i.e.: $\sum_{n\ge1} a(n) = \infty$ and $\sum_{n \ge1} a(n)^2 < \infty.$ 
We only state the stepsizes in  \Cref{asm: stepsize} to simplify the presentation. All proofs hold provided that the stepsizes are not asymptotically larger than the stepsizes presented in \Cref{asm: stepsize}. E.g., for $p\in (0,1]$ we only require $a(n) \in \cO\left(\frac{1}{n}\right)$ in \Cref{asm: stepsize}.1. 
\end{remark}
\begin{remark}
It is always possible to construct a random variable that stochastically dominates a finite set of random variables provided that stochastically dominating random variables are given for each random variable of the finite set. Hence, assuming a single dominating random variable for all AoIPs as formulated in \Cref{asm: AoI} is without loss of generality.
\end{remark}
\begin{remark}
    AoIPs can be combined to create more complex AoI structures where multiple sources give rise to AoI at a monitor. For example, a more general class of age/delay processes can be defined as stochastic processes that take at every time step the value of one AoIP from a finite family of AoIPs. Our results for AoIPs to be presented in the next subsection extend to such combined AoIPs.
\end{remark}

\subsection{Main Results}

We will now state our main stability theorem and its convergence corollary. The detailed proofs are presented in \Cref{sec: main_results}. Below we will present self-contained results for AoIPs that will be important to control the effect of the local drift errors $e^i_n$ in \Cref{sec: main_results}. After that, we present an outline of our stability analysis.

\begin{theorem}[Distributed Borkar-Meyn Theorem]
    \label{thm: main_distributed}
    Under \Cref{asm: lipschitz}-\ref{asm: stepsize}, $\sup_n \norm{x_n} < \infty \text{ a.s.}$
\end{theorem}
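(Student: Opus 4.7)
My plan is to follow the scaling strategy of the traditional Borkar--Meyn Theorem, using the paper's two new ingredients (\Cref{lem: step_size_AoI_sum} and \Cref{lem: new_gronwall}) to absorb the AoI-induced drift errors. Let $t(n)\coloneqq \sum_{k=0}^{n-1} a(k)$, fix a horizon $T>0$, and pick rescaling indices $T_n$ with $t(T_{n+1})-t(T_n)\in [T,T+a(T_n)]$. Define $r(n)\coloneqq \max(\norm{x_{T_n}},1)$ and the rescaled iterates $\hat x_k^i\coloneqq x_k^i/r(n)$ for $T_n\le k< T_{n+1}$. On each such block, the rescaled iteration takes the form
\[
\hat x_{k+1}^i = \hat x_k^i + a(k)\bigl[h^i_{r(n)}(\hat x_k^1,\ldots,\hat x_k^D) + \varepsilon_k^i + \widehat M_{k+1}^i\bigr],
\]
where $h_c(x)\coloneqq h(cx)/c$, $\widehat M_{k+1}^i\coloneqq M_{k+1}^i/r(n)$, and $\varepsilon_k^i$ collects the AoI drift error plus the small correction arising when $k-\tau_{ij}(k)$ lies in an earlier block with scaling factor $r(m)\le r(n)$.

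The target is to show that on each block the scaled path tracks a solution of $\dot x=h_{r(n)}(x)$. Since $h_{r(n)}\to h_\infty$ pointwise and the $h_\infty$-flow contracts every unit-ball initial condition to a ball of radius $\delta<1$ within time $T$ (\Cref{asm: BMT}, with $T$ chosen large enough), we would obtain $\norm{\hat x_{T_{n+1}}}\le \delta+\eta_n$ with $\eta_n\to 0$ almost surely. This yields the recursion $r(n+1)\le \delta\, r(n) + 1 + o(1)$, iterating which forces $\sup_n r(n)<\infty$; a uniform-in-$k$ bound on $\max_{T_n\le k<T_{n+1}}\norm{\hat x_k}$, also obtainable during the tracking argument, then upgrades this to $\sup_n\norm{x_n}<\infty$.

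The crucial technical task is to verify that $\varepsilon_k^i$ and the scaled noise both vanish uniformly on each block almost surely, without assuming stability of $\{x_n\}$. Lipschitz continuity (\Cref{asm: lipschitz}) gives $\norm{\varepsilon_k^i}\le L \sum_j \norm{\hat x_{k-\tau_{ij}(k)}^j - \hat x_k^j}$, and unrolling the rescaled recursion across the random window $[k-\tau_{ij}(k),k]$ produces a linear recursive inequality for $\norm{\hat x_m-\hat x_k}$ whose lower summation limit is itself a random AoIP value --- exactly the structure tailored to \Cref{lem: new_gronwall}. The resulting bound is controlled by a quantity proportional to $\bigl(\sum_{\ell=k-\tau_{ij}(k)}^{k-1} a(\ell)\bigr)\exp\bigl(L\sum_{\ell=k-\tau_{ij}(k)}^{k-1} a(\ell)\bigr)$, which by \Cref{lem: step_size_AoI_sum} (itself consequent on \Cref{lem: aoi_bound} together with \Cref{asm: AoI} and \Cref{asm: stepsize}) tends to zero almost surely. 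A parallel $L_2$ recursion combined with \Cref{asm: noise}.2 and the martingale convergence machinery will show that the scaled noise contribution vanishes on each block as well.

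I expect the main obstacle to be decoupling the noise bound from stability. In the traditional BMT the conditional variance of $M_{n+1}$ is controlled by $1+\norm{x_n}^2$, which the rescaling handles directly; here \Cref{asm: noise}.2 involves $1+\norm{(x^1_{n-\tau_{i1}(n)},\ldots,x^D_{n-\tau_{iD}(n)})}^2$, so the delayed iterates must be controlled simultaneously with the iterate at time $n$. This forces yet another recursive $L_2$ inequality of the form resolved by \Cref{lem: new_gronwall}, and a secondary delicate point is that past values $x_{k-\tau_{ij}(k)}^j$ may sit in earlier scaling blocks, so the ratios $r(m)/r(n)$ must be tracked through the Gronwall resolution. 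Once both $\varepsilon_k^i$ and the scaled noise are shown to be $o(1)$ almost surely on each scaling interval, the standard linear-interpolation / ODE-tracking argument of BMT applies essentially verbatim, and the contraction $r(n+1)\le\delta r(n)+o(1)$ closes the proof.
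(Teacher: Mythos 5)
Your proposal keeps the traditional BMT scaling $r(n)\coloneqq\max(\norm{x_{T_n}},1)$, but the paper argues --- and this is the central technical point of the whole analysis --- that this scaling \emph{cannot} work in the presence of AoI. The reason: when the delay $\tau_{ij}(k)$ reaches back across a block boundary into an earlier segment $[T_m,T_{m+1})$, the term $\norm{x^j_{k-\tau_{ij}(k)}}/r(n)$ equals $(r(m)/r(n))\norm{\hat x^j_{k-\tau_{ij}(k)}}$. With the block-wise, non-monotone $r(\cdot)$ the ratio $r(m)/r(n)$ can be arbitrarily large (indeed, controlling it is equivalent to stability, so any attempt to bound it would be circular). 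This poisons both the Gronwall recursion for $\norm{\hat x_m-\hat x_k}$ and the rescaled noise variance bound from \Cref{asm: noise}.2, since each involves delayed iterates whose scaling you cannot relate to $r(n)$. You flag this as ``a secondary delicate point'' that ``must be tracked,'' but you do not resolve it, and the paper's position is that it is not resolvable with this scaling. The paper's fix is to replace $r(n)$ by the \emph{monotonically increasing} scaling sequence $s(m)\coloneqq\sup_{l\le m}\norm{\overline x(T_l)}\vee 1$, so that $s(m(k))\le s(m(n))$ automatically holds for $k\le n$ and every delayed iterate can be upper-bounded by its own rescaled version. This change in turn invalidates the contraction recursion $r(n+1)\le\delta r(n)+o(1)$ that you use to close the argument: a monotone $s(m)$ cannot contract. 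The paper therefore replaces it with a different contradiction --- if $s(m)\nearrow\infty$, then along the subsequence $\tilde m(k)$ where $s$ strictly increases, the ODE tracking plus \Cref{lem: liapunov_arg} forces $\norm{\hat x(T_{\tilde m(k+1)})}<1$, contradicting $s(\tilde m(k+1))>s(\tilde m(k))$.

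A second gap, independent of the first: your $L_2$ analysis glosses over the fact that the quantity to be bounded is an expectation of a sum over a \emph{random} time window, $\Ew{\sum_{k=n-\tau_{ij}(n)}^{n-1}a(k)\norm{\cdots}^2}$. One cannot simply interchange the expectation and the random-length sum. The paper handles this by combining \Cref{lem: step_size_AoI_sum} with Egorov's theorem to pass to an increasing sequence of probability subspaces $E_z$ on which $\sum_{k=n-\tau_{ij}(n)}^{n-1}a(k)\to 0$ uniformly, which permits replacing $\tau_{ij}(n)$ by a \emph{deterministic} upper bound $\Delta(n)$ inside the expectation; the full-space result then follows by continuity from below. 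Your sketch of ``a parallel $L_2$ recursion combined with the martingale convergence machinery'' does not account for this, and without it the step 1 ($L_2$ boundedness of $\hat x$) does not go through. Once these two issues are fixed --- monotone scaling and the Egorov reduction --- the remaining structure of your plan (Lipschitz bound on the drift error, recursive inequality resolved by \Cref{lem: new_gronwall}, ODE tracking via \Cref{asm: BMT}) matches the paper's.
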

\begin{corollary}
    \label{cor: convergence}
    Iteration \eqref{eq: main_iteration} converges almost surely to a potential sample path-dependent compact connected internally chain transitive invariant set of the ODE \eqref{eq: ODE_method}.
\end{corollary}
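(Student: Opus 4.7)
The plan is to reduce \Cref{cor: convergence} to the standard ODE method of Bena\"{\i}m \cite{benaim1996dynamical} by using \Cref{thm: main_distributed} to trivialize the AoI-induced drift errors, which dominate the technical difficulty of proving stability but become easy to control once the iterates are known to be bounded.

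First I would fix a sample path on which \Cref{thm: main_distributed} gives $\sup_n \norm{x_n} \le B<\infty$. Lipschitz continuity of $h$ (\Cref{asm: lipschitz}) then makes $\norm{h(x_n)}$ uniformly bounded, and by \Cref{asm: noise}.2 the conditional second moments $\Ew{\norm{M^i_{n+1}}^2 \mid \cF_n}$ are uniformly bounded as well. Combined with the square-summability of $a(n)$ (\Cref{asm: stepsize}), the martingale series $\sum_{k\ge1} a(k) M^i_{k+1}$ is thus $L^2$-bounded and converges almost surely. I would then show that the local drift errors $e^i_n$ of \eqref{eq: local_drift_errors} vanish: Lipschitz continuity gives $\norm{e^i_n} \le L \sum_{j} \norm{x^j_n - x^j_{n-\tau_{ij}(n)}}$, and unrolling \eqref{eq: main_iteration} yields
\[
\norm{x^j_n - x^j_{n-\tau_{ij}(n)}} \le \sum_{k=n-\tau_{ij}(n)}^{n-1} a(k)\,\norm{h^j(x^1_{k-\tau_{j1}(k)},\ldots,x^D_{k-\tau_{jD}(k)})} + \norm{\sum_{k=n-\tau_{ij}(n)}^{n-1} a(k)\,M^j_{k+1}}.
\]
The first summand is bounded pathwise by a constant times $\sum_{k=n-\tau_{ij}(n)}^{n-1} a(k)$, which tends to $0\as$ by \Cref{lem: step_size_AoI_sum}; the second summand is a tail increment of the convergent martingale series above, and since \Cref{lem: aoi_bound} implies $n-\tau_{ij}(n)\to\infty\as$, this increment also vanishes.

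With $e^i_n\to 0\as$, iteration \eqref{eq: main_iteration} reduces to a bounded stochastic approximation $x_{n+1} = x_n + a(n)[h(x_n)+M_{n+1}] + a(n)\,o(1)$ tracking \eqref{eq: ODE_method}. Applying the ODE method (\cite{benaim1996dynamical}, or equivalently \cite[Chapter 2, Theorem 2]{borkar2009stochastic}), the linear interpolation of $\{x_n\}$ is almost surely an asymptotic pseudo-trajectory of \eqref{eq: ODE_method}, whose $\omega$-limit is a non-empty compact connected internally chain transitive invariant set of $\dot x = h(x)$ --- precisely the conclusion of \Cref{cor: convergence}. The only non-trivial step is the martingale-window argument above: although the left endpoint $n-\tau_{ij}(n)$ is random and not a stopping time of $\{\cF_n\}$, the observation that an almost-surely convergent sequence of partial sums has vanishing increments over any window with diverging left endpoint resolves this immediately, given that \Cref{lem: aoi_bound} supplies $n-\tau_{ij}(n)\to\infty\as$
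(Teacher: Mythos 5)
Your proposal is correct and follows essentially the same route as the paper: invoke \Cref{thm: main_distributed} for stability, show the AoI drift errors $e_n$ vanish almost surely by combining \Cref{lem: step_size_AoI_sum} for the (now bounded) drift part with martingale-tail-increment reasoning for the noise part (using $n-\tau_{ij}(n)\to\infty$ from \Cref{lem: aoi_bound}), and then appeal to the standard ODE method. The paper reaches the $e_n\in o(1)$ conclusion by citing its already-established rescaled machinery (with $s(m)$ eventually constant once stability holds), whereas you re-derive it directly, but the underlying ideas and ingredients are identical.
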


The present paper was prompted in large part by the following lemma, \Cref{lem: aoi_bound}, and its successor, \Cref{lem: step_size_AoI_sum}, both of which are of independent interest. 
\begin{lemma}
\label{lem: aoi_bound}
Let $\tau(n)$ be an AoIP. Suppose there exists a random variable $\overline{\tau}$ with $\tau(n) \lest \overline{\tau}$ for all $n\ge1$ with $\Ew{\overline{\tau}^p}$ for some $p\in (0,\infty)$, then 
\begin{enumerate}
    \item $\Pr{\tau(n) > \varepsilon n  \io} = 0, \quad \text{ if } p \in (0,1]$.
    \item $\Pr{\tau(n) > \varepsilon n^{\frac{1}{p}} \io } = 0, \quad \text{ if } p > 1.$
\end{enumerate}
for all $\varepsilon \in (0,1)$.
\end{lemma}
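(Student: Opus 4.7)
The plan is to apply the first Borel--Cantelli lemma in both cases; since we only need summability of the probabilities $\Pr{\tau(n) > \cdot}$ and not independence, the dependence structure of $\{\tau(n)\}$ poses no issue. The opening move in either case is to pass from $\tau(n)$ to $\overline{\tau}$ via stochastic dominance, so that $\Pr{\tau(n) > t} \leq \Pr{\overline{\tau} > t}$ for all $n \geq 1$ and $t \geq 0$.

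For Case~2 ($p > 1$) the unit growth property is not needed. Using the identity $\sum_{n \geq 1} \Pr{Y > cn} \leq \Ew{Y}/c$ for nonnegative $Y$, applied to $Y = \overline{\tau}^p$ and $c = \varepsilon^p$, I obtain
\begin{equation*}
    \sum_{n \geq 1} \Pr{\overline{\tau} > \varepsilon n^{1/p}} \;=\; \sum_{n \geq 1} \Pr{\overline{\tau}^p > \varepsilon^p n} \;\leq\; \varepsilon^{-p}\, \Ew{\overline{\tau}^p} \;<\; \infty,
\end{equation*}
and the first Borel--Cantelli lemma closes the argument.

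For Case~1 ($p \in (0,1]$) the analogous direct bound $\sum_n \Pr{\overline{\tau} > \varepsilon n}$ diverges as soon as $p < 1$, since the best Markov estimate is $\sum_n (\varepsilon n)^{-p} \Ew{\overline{\tau}^p}$. The route I plan to take is to replace the summation over all $n$ by a summation over a geometric subsequence, using the unit growth property to transfer events between neighboring grid points. Fix $\rho \in (1,\, 1 + \varepsilon/2)$ and set $N_k \coloneqq \lceil \rho^k \rceil$. The backward form of unit growth, $\tau(n - j) \geq \tau(n) - j$, implies that whenever $\tau(m) > \varepsilon m$ for some $m \in [N_k, N_{k+1})$,
\begin{equation*}
    \tau(N_k) \;\geq\; \tau(m) - (m - N_k) \;>\; \varepsilon N_k - (N_{k+1} - N_k) \;\geq\; (\varepsilon - (\rho - 1))\, N_k - 1;
\end{equation*}
by the choice of $\rho$, for $N_k$ large this forces $\tau(N_k) > (\varepsilon/2) N_k$. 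Stochastic dominance and Markov then give $\Pr{\tau(N_k) > (\varepsilon/2) N_k} \leq (\varepsilon/2)^{-p} N_k^{-p} \Ew{\overline{\tau}^p}$, and $\sum_k N_k^{-p} \leq \sum_k \rho^{-kp} < \infty$. Applying Borel--Cantelli along $\{N_k\}$ and taking the contrapositive of the implication above yields $\tau(m) \leq \varepsilon m$ for all but finitely many $m$ almost surely, which is the desired conclusion.

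The main obstacle is the delicate balance in Case~1: the geometric spacing $\rho$ must be close enough to $1$ that the transport loss $(\rho - 1) N_k$ incurred when moving the event from $m$ back to $N_k$ stays strictly below the budget $\varepsilon N_k$, yet strictly bigger than $1$ so that the sparser subsequence produces a convergent series $\sum_k N_k^{-p}$ out of a moment bound that, on its own, is too weak to control $\sum_n n^{-p}$. Once this balance is struck, the unit growth property does all the heavy lifting by compressing the entire block $[N_k, N_{k+1})$ of potentially bad events into a single summable anchor event at $N_k$.
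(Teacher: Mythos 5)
Your proof is correct, and it takes a genuinely different route from the paper's in Case~1. The paper anchors Borel--Cantelli on the polynomial grid $n_k \approx k^{1/p}$: it first shows $\tau(n_k) \le \varepsilon n_k$ eventually, then uses unit growth to fill in the gaps $n_{k+1} - n_k = O(k^{1/p-1}) = O(n_k^{1-p})$, which yields the intermediate estimate $\tau(n) \le \varepsilon n + (1-\varepsilon)C n^{1-p}$, and finally absorbs the sublinear $n^{1-p}$ error term by relabeling $\varepsilon \mapsto \varepsilon/2$. You instead anchor on a geometric grid $N_k = \lceil \rho^k \rceil$ with $\rho - 1 < \varepsilon/2$ tied to the target threshold, so the gap $N_{k+1} - N_k \lesssim (\rho-1) N_k$ is controlled \emph{linearly} rather than by a sublinear power; unit growth then pushes any offending event inside the block $[N_k, N_{k+1})$ back onto the anchor $N_k$ at the shrunken threshold $\varepsilon/2$, and a single Borel--Cantelli application along $\{N_k\}$ (always summable, $\sum_k \rho^{-kp} < \infty$, regardless of how small $p$ is) finishes via the contrapositive. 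Your version is closer in spirit to classical blocking arguments used in Marcinkiewicz--Zygmund-type SLLN proofs, and it cleanly sidesteps two small awkwardnesses in the paper's argument: the estimate on the differences $(k+1)^{1/p} - k^{1/p}$ (which the paper handles in a commented-out binomial computation), and the implicit rounding in treating $k^{1/p}$ as an integer argument of $\tau$. The only price is that the grid now depends on $\varepsilon$, which is harmless since the claim is quantified over each fixed $\varepsilon$. One minor point of care you may want to make explicit in a final write-up: the step ``for $N_k$ large this forces $\tau(N_k) > (\varepsilon/2) N_k$'' relies on $\rho$ being chosen \emph{strictly} inside $(1, 1+\varepsilon/2)$, so that $\delta := \varepsilon - (\rho-1) - \varepsilon/2 > 0$ provides the slack needed to swallow the additive $-1$ once $N_k > 1/\delta$; as stated, the argument is right, but the slack is what makes it work. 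Case~2 matches the paper's remark that it is a direct Borel--Cantelli consequence, and you are correct that the unit growth property is not needed there.
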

\Cref{lem: aoi_bound} shows that if an AoIP has at least a dominating random variable with some moment bound, then the AoIP will not exceed any fraction of $n$ after some time. To show this, we use the first Borel-Cantelli Lemma and the stochastic dominance property to conclude that the AoIP has a subsequence that does not exceed any fraction of $n$ after some time. We will then use the unit growth property of AoIPs to show that the AoIP does not exceed any fraction of $n$ plus a term in the order of $n^{1-p}$ after some time. A simple argument based on the limit superior then completes the proof (see Appendix \ref{sec: appendix} for details.)

The key consequence of \Cref{lem: aoi_bound} is that under \Cref{asm: AoI} and \Cref{asm: stepsize}, the stepsize accumulated over intervals $[n-1, n-\tau_{ij}(n)]$ converges to zero almost surely. The lemma, therefore, connects the uniform tail decay of AoIPs with the algorithm stepsize decay.

\begin{lemma}
Under \Cref{asm: AoI} and \Cref{asm: stepsize} it follows that
\label{lem: step_size_AoI_sum}
$$\sum_{k=n-\tau_{ij}(n)}^{n-1} a(k) \to 0 \as.$$
\end{lemma}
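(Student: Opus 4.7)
The plan is to invoke Lemma \ref{lem: aoi_bound} for each of the (finitely many) AoIPs $\tau_{ij}(n)$ and then split into the two cases according to whether $p \le 1$ or $p > 1$, as prescribed by Assumption \ref{asm: stepsize}. Throughout, fix a pair $(i,j)$ and write $\tau(n) \coloneqq \tau_{ij}(n)$; since there are only $D^2$ pairs and each uses a single almost-sure event, the conclusion then holds jointly on a full-measure set.

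For $p \in (0,1]$, by Assumption \ref{asm: stepsize} we take $a(n) = a/n$, and Lemma \ref{lem: aoi_bound}.1 gives, for each fixed $\varepsilon \in (0,1)$, the event $\{\tau(n) \le \varepsilon n \text{ eventually}\}$ has probability $1$. Using the standard integral comparison for the harmonic-type sum
\begin{equation*}
    \sum_{k=n-\tau(n)}^{n-1} a(k) \;=\; a\sum_{k=n-\tau(n)}^{n-1} \frac{1}{k} \;\le\; a\, \log\!\left( \frac{n-1}{n-\tau(n)-1} \right),
\end{equation*}
and substituting the bound $\tau(n)\le\varepsilon n$, the right-hand side is eventually at most $a\log\bigl(1/(1-2\varepsilon)\bigr)$ for $n$ large enough. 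Now I pick a countable sequence $\varepsilon_k \searrow 0$ and take the intersection of the corresponding full-measure events; on this intersection, $\limsup_n \sum_{k=n-\tau(n)}^{n-1} a(k) \le a\log(1/(1-2\varepsilon_k))$ for every $k$, hence the $\limsup$ is $0$.

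For $p > 1$, Assumption \ref{asm: stepsize} gives $a(n) = an^{-1/q}$ with $1 < q < \min\{2,p\}$, and Lemma \ref{lem: aoi_bound}.2 yields $\tau(n) \le \varepsilon n^{1/p}$ eventually (a.s.) for each $\varepsilon$. Here a single bound on the number of summands times the maximum summand suffices:
\begin{equation*}
    \sum_{k=n-\tau(n)}^{n-1} a(k) \;\le\; a\,\tau(n)\,(n-\tau(n))^{-1/q} \;\le\; a\,\varepsilon\, n^{1/p}\bigl(n-\varepsilon n^{1/p}\bigr)^{-1/q}.
\end{equation*}
Since $1/p < 1$, we have $n-\varepsilon n^{1/p} \ge n/2$ for $n$ large, and since $1/q > 1/p$, the right-hand side is $O\bigl(n^{1/p - 1/q}\bigr) \to 0$.

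The main obstacle is really a conceptual one in the $p \le 1$ case: Lemma \ref{lem: aoi_bound} only guarantees $\tau(n) = o(n)$ with constant $\varepsilon$ fixed before the almost-sure statement, so a single application cannot drive the partial sum to zero because the harmonic sum over a constant fraction of $n$ is order $\log(1/(1-\varepsilon))$, which is only small, not zero. The resolution, as above, is that it is sufficient to intersect the Lemma \ref{lem: aoi_bound} events over a countable decreasing sequence of $\varepsilon$'s and read off the $\limsup$. The $p > 1$ case is quantitatively easier because the number of summands grows sublinearly, so a crude sup-bound on $a(k)$ over the summation window already suffices.
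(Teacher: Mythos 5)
Your proof is correct and takes essentially the paper's approach: apply \Cref{lem: aoi_bound}, bound the accumulated stepsize by a comparison estimate, and send $\varepsilon\searrow 0$ via a countable intersection (the paper packages that last step as a citation to a standard almost-sure-convergence lemma, Rosenthal's Lemma~5.2.2, rather than spelling out the intersection). A small polish in your $p>1$ case: the crude count-times-max bound combined with the strict gap $q<p$ already gives polynomial decay $n^{1/p-1/q}\to 0$ for one fixed $\varepsilon$, so no intersection over $\varepsilon_k$ is even needed there, whereas the paper's integral-plus-Bernoulli estimate still requires letting $\varepsilon\to 0$.
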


\begin{proof}
We drop the indices $i$ and $j$ in $\tau_{ij}(n)$ for this proof.
Consider $p\in(0,1]$, i.e. we pick $a(n) = \frac{a}{n}$. Since $a(n)$ is monotonically decreasing it follows that
\begin{align}
    \sum_{k=n-\tau(n)}^{n} a(k) \le \sum_{k=n-\tau(n)}^{n} \frac{a}{k} &\le \frac{a}{n-\tau(n)}  + \int_{n-\tau(n)}^n \frac{a}{t} dt     \\ &= \frac{a}{n-\tau(n)} + a \log\left( \frac{n}{n-\tau(n)} \right).
\end{align}
Then the lemma follows by continuity of the logarithm provided that
\begin{equation}
    \frac{n-\tau(n)}{n} \to 1 \as \iff \frac{\tau(n)}{n} \to 0 \as
\end{equation}

To show this, we state a lemma that is often used to prove the strong law of large numbers. Here, we restate it as required for the present setting.
\begin{lemma}[{\cite[Lemma 5.2.2.]{rosenthal2006first}}]
\label{lem: as_conv_lemma}
Let $X_1,X_2, \ldots$ be real-valued random variables. Suppose for each $\varepsilon>0$, we have $\Pr{\abs{X_n} \ge \varepsilon \io } = 0.$ Then $X_n$ converges to 0 almost surely.
\end{lemma}

Fix $\varepsilon \in (0,1) $.
By (A1) there is a random variable $\overline{\tau}$ with $\tau(n) \lest \overline{\tau}$ and $\Ew{\overline{\tau}^p}< \infty$. \Cref{lem: aoi_bound} then shows that 
\begin{align}
    \Pr{\tau(n) > \varepsilon n \io }= 0 
    \iff \quad  \Pr{\frac{\tau(n)}{n} > \varepsilon \io } = 0
\end{align}
As this holds for every $\varepsilon \in (0,1)$, it follows from the above lemma that $\frac{\tau(n)}{n} \to 0 \as$, which thus implies the statement to be shown. The case $p>1$ follows a similar line of argument and is relegated to  Appendix \ref{sec: appendix}.
\end{proof}

To understand the importance of \Cref{lem: step_size_AoI_sum} for the stability of distributed SAs, we will now present an outline of our stability proof. Notably, it turns out that the almost sure convergence in \Cref{lem: step_size_AoI_sum} reaches deeper into our stability analysis than one might initially expect.

\subsection{Stability Analysis Outline}
\label{sec: outline}

Our stability analysis is inspired by the traditional BMT and can be seen as a generalization to a distributed BMT. 
To better understand the required changes, we first sketch the proof of the traditional BMT, i.e. we consider \eqref{eq: main_iteration} with $\tau_{ij}(n) = 0$. 

\emph{Traditional BMT outline}:
First, create a piecewise linear interpolated trajectory $\overline{x}(t)$ from $x_k$. Then, separate the time axis $[0, \infty)$ into concatenated time segments $[T_m, T_{m+1}]$ of length approximately $T > 0$.
Next, create a rescaled trajectory $\hat{x}(t)$ by dividing $x(t)$ over each
$[T_m, T_{m+1}]$ by $\norm{x(T_m)}$, i.e. each $x(T_m)$
is scaled/projected to the unit ball. It is then easy to show that $\hat{x}(t)$ is stable almost surely and one can conclude by a stochastic approximation argument that $\hat{x}(t)$ tracks solutions to scaled ODEs with drift $h_c(\cdot)$, $c\in [1, \infty]$, from \Cref{asm: BMT}. The stability proof then follows by contradiction. Assuming that $x_n$ is unstable, there will be a subsequence of scaling factors $\norm{x(T_m)}$ diverging to infinity. A stochastic approximation argument then leads to corresponding rescaled segments of $\hat{x}(t)$ that asymptotically track the limiting ODE \eqref{eq: c_infty_ODE} with drift $h_\infty(\cdot)$ from \Cref{asm: BMT}. Since this limiting ODE is globally asymptotically stable to the origin, it follows that the aforementioned rescaled segments eventually drift towards the origin, which leads to a contradiction. We will now outline the proof of our distributed BMT.

\emph{Distributed BMT outline}: Rewrite the main iteration \eqref{eq: main_iteration} as
\begin{equation}
\label{eq: main_iteration_with_error}
   x_{n+1} = x_n + a(n) \left[ h(x_n) + e_n + M_{n+1}\right],
\end{equation}
for $n\ge1$, with additive drift error $e_n = (e^1_n, \ldots, e^D_n)$ and local drift errors $e^i_n$ as defined in \eqref{eq: local_drift_errors}. Equation \eqref{eq: main_iteration_with_error} has the form of a standard stochastic approximation iteration with additive drift error $e_n$ and martingale difference noise $M_{n+1}$. Our main task is to verify that $x_n$ is stable almost surely. Ones this is shown, in view of \eqref{eq: stability_error_bound} below, \Cref{lem: step_size_AoI_sum} will yield that $e_n \in o(1)$ almost surely. The convergence of $x_{n}$ will then follow from the stochastic approximation literature \cite[Section 2]{borkar2009stochastic}.

First, we have a bound for the drift errors using the Lipschitz-continuity of $h$ (\Cref{asm: lipschitz}):
\begin{align}
    \norm{e^i_n} &\le L \sum_{j=1}^D\norm{x^j_n - x^j_{n- \tau_{ij}(n)}}
\end{align}
Next, using triangular inequality it follows that
\begin{align}
    \norm{x^j_n - x^j_{n- \tau_{ij}(n)}}&\le  \sum_{k=n-\tau_{ij}(n)}^{n-1} \norm{x^j_{k+1} - x^j_{k}},\\
    &= \sum_{k=n-\tau_{ij}(n)}^{n-1} a(k) \left( \norm{h^j( x^1_{k-\tau_{j1}(k)}, \ldots, x^D_{k-\tau_{jD}(k)}) + M^j_k }  \right), \label{eq: stability_error_bound}
\end{align}
where the last step uses the main iteration \eqref{eq: main_iteration}.\footnote{Note that whenever $\tau_{ij}(n) = 0$ the sums on the right hand side (r.h.s.) are empty and thus equal to zero.} From inequality \eqref{eq: stability_error_bound}, one can suspect that $\sum_{k=n-\tau_{ij}(n)}^{n-1} a(k)  \to 0 \as $ is a sufficient condition to prove the stability of \eqref{eq: main_iteration}.  This is indeed immediate from the BMT provided that 1. the drift $h$ is bounded almost surely and 2. the noise $M_{n+1}$ is bounded almost surely. We do not make these assumptions.

We shall impose the natural condition that iteration \eqref{eq: main_iteration} (and thence \eqref{eq: main_iteration_with_error}) starts from a  prescribed $x_0$ with $(\Ew{\norm{x_0}^2})^\frac{1}{2} < \infty$.
\Cref{asm: lipschitz} then implies the linear growth of $h(\cdot)$:
\begin{equation}
    \label{eq: L1_bound_h}
    \norm{h(x)} \le K(1 + \norm{x})
\end{equation}
for all $x\in \R^d$ for some $K>0$ depending on $x_0$. To simplify the presentation, we assume without loss of generality that the same constant $K$ holds for both \eqref{eq: L1_bound_h} and the inequality in \Cref{asm: noise}. Equation \eqref{eq: stability_error_bound} therefore leads to
\begin{equation}
    \label{eq: stability_error_bound_new2}
    \norm{x^j_n - x^j_{n- \tau_{ij}(n)}} \le K \sum_{k=n-\tau_{ij}(n)}^{n-1} a(k) \left( 1+ \norm{x_k} +  \sum_{l=1}^D \norm{x_k^l -  x^l_{k-\tau_{jl}(k)}}\right)  + \norm{\sum_{k=n-\tau_{ij}(n)}^{n-1} a(k)  M^j_k}. 
\end{equation}
We can now make a few key observations from this inequality.

\emph{First}, in view of the traditional BMT, we will create a rescaled trajectory $\hat{x}(t)$ from $x_k$. Then we want to show that this rescaled trajectory is stable almost surely. With inequality \eqref{eq: stability_error_bound_new2}, we can now see that the rescaling sequence $\norm{x(T_m)}$ as used in the BMT does not work for a distributed BMT. The inequality shows that a bound for the local drift error $e_n$ (thence for $x_n$) depends on $x_k$ with $k \in \{n-1, \ldots n-\tau_{ij}(n) \}$. The problem is that these $x_k$ will be associated with different $T$-length intervals $[T_{m}, T_{m+1}]$ than $x_n$, whenever $\tau_{ij}(n)$ is large. Thus scaling both sides of \eqref{eq: stability_error_bound_new2} with the scaling factor associated with $x_n$ does not lead to variables on the r.h.s. of \eqref{eq: stability_error_bound_new2} that can be meaningfully related to the rescaled trajectory $\hat{x}(t)$. We instead propose that the interpolated trajectory is scaled over every $T$-length segment $[T_{m}, T_{m+1}]$ using the scaling sequence $s(m) \coloneqq \sup_{l\le m} \norm{x(T_l)}$. The crucial point of this construction is that the $s(m)$ is monotonically increasing, which solves the aforementioned problem of the original scaling sequence.

Using the new rescaling sequence $s(m)$, we define the rescaled trajectory $\hat{x}(t)$ as well as rescaled versions $\hat{e}_n$ and $\hat{M}_{n+1}$ of $e_n$ and $M_{n+1}$, respectively. 
The task is then to show that $\hat{x}(t)$ is stable almost surely, for which we proceed in the following steps:
\begin{enumerate}
    \item We prove an $L_2$ bound, $\sup_t \Ew{ \norm{\hat{x}(t)}^2} < \infty$ (\Cref{lem: main_l2_bound})
    \item We show that the accumulated rescaled noise iteration $\sum_{k=0}^{n-1} a(k) \hat{M}_{k+1}$ is convergent almost surely (\Cref{lem: martingale_convergence}).
    \item We show that $\sup_{t\ge 0} \norm{\hat{x}(t)} < \infty \as$ and $\norm{\hat{e}_n} \to 0\as   $ (\Cref{lem: main_L1_bound} and \Cref{cor: L1_error_to_zero}, respectively).
\end{enumerate}
While straightforward in the BMT, these steps are much more involved in the presence of stochastic drift errors due to AoI. \Cref{eq: stability_error_bound_new2} sheds light on these difficulties.

The \emph{second} observation from \eqref{eq: stability_error_bound_new2} is that we need to bound $\Ew{\sum_{k=n-\tau_{ij}(n)}^{n-1} \norm{x_k^l -  x^l_{k-\tau_{jl}(k)}}^2}$ to show that $\hat{x}(t)$ is bounded in $L_2$ . This requires that we take the expected value of a random number of random variables, which is generally difficult without additional assumptions (see e.g. Wald's lemma). To circumvent this, we use the almost sure convergence from \Cref{lem: step_size_AoI_sum}.
Specifically, with \Cref{lem: step_size_AoI_sum} it follows from Egorov's theorem that 
$\sum_{k=n-\tau_{ij}(n)}^{n-1} a(k)$ converges almost uniformly. We can therefore work with a deterministic upper bound $\Delta(n) \ge \tau_{ij}(n)$, such that  $\sum_{k=n-\Delta(n)}^{n-1} a(k)$ converges uniformly on an increasing sequence of probability subspaces. By construction, it will follow that 
\begin{equation}
    \label{eq: deteministic_bound}
    \Ew{\sum_{k=n-\tau_{ij}(n)}^{n-1} a(k) \norm{x_k^l -  x^l_{k-\tau_{jl}(k)}}^2 } \le \sum_{k=n-\Delta(n)}^{n-1} a(k) \Ew{\norm{x_k^l -  x^l_{k-\tau_{jl}(k)}}^2 }.
\end{equation}
and we can show the $L_2$ bound 1. on an increasing sequence of probability subspaces. With this, we then conclude that $\sum_{k=0}^{n-1} a(k) \hat{M}_{k+1}$ converges almost surely on the increasing sequence of probability subspaces. Then, 2. follows on the whole underlying probability space by continuity from below. 

The \emph{third} observation from \eqref{eq: stability_error_bound_new2} is that all local drift errors $e^i_n$ are interdependent, i.e. the bound of each $\norm{x^j_n - x^j_{n- \tau_{ij}(n)}}$ depends on all $\norm{x^l_k - x^l_{k- \tau_{jl}(n)}}$ for all $1\le l \le D$ and $k \in \{n-1, \ldots n-\tau_{ij}(n) \}.$
Here, our important observation is that if we sum both sides of \eqref{eq: stability_error_bound_new2} overall $1\le i,j \le D$ then a recursive inequality in the variable $\sum_{i,j=1}^D \norm{x^j_n - x^j_{n- \tau_{ij}(n)}}$ arises. Indeed, this recursive structure arises both in $L_2$ as well as is in norm, i.e. in step 1. and step 3. above. To evaluate the recursive structures, we combine \Cref{lem: step_size_AoI_sum} and a new Gronwall-type inequality (\Cref{lem: new_gronwall}).
Finally, by combining the evaluated recursive structure with the rescaled algorithm iteration, we will arrive at an iteration that shows $\sup_{t\ge 0} \norm{\hat{x}(t)} < \infty \as$.


With the established stability of the rescaled trajectory, we will then see that $\hat{x}(t)$ tracks solutions to ODEs with drift $h_{s(m)}(\cdot)$, with $h_c(\cdot)$ from \Cref{asm: BMT} and $s(m)$ the new rescaling sequence used to create $\hat{x}(t)$. It is now left to prove \Cref{thm: main_distributed} using that $h_\infty(\cdot)$ from \Cref{asm: BMT} is the drift of an asymptotically stable ODE.
As in the traditional BMT, we assume by contradiction that $x_k$ is unstable, which implies that $s(m)$ diverges to infinity. This now leads to a new contradiction: We will show that the rescaling factors have to eventually decrease once they exceed a certain threshold. This line of argument is new compared to the traditional BMT and was necessary since we defined the scaling sequence as monotonically increasing. Indeed, the line of argument in the traditional BMT is not applicable to monotonically increasing scaling sequences, but, as we illustrated above, the monotonically increasing scaling sequence is required to deal with the drift errors due to AoI. 
On the flip side, our new line of argument for the distributed BMT can be used to prove the traditional BMT using our monotonically increasing scaling sequence. This arguably leads to a simpler proof of the traditional BMT. We will now present the detailed stability analysis.

\section{Stability Analysis}
\label{sec: main_results}

Divide the time axis $[0,\infty)$ using the stepsize $a(n)$ as follows. Define time instants 
\begin{equation}
\label{eq: time_instants}
    t(0) \coloneqq 0, \quad  t(n) \coloneqq \sum_{i=1}^{n-1} a(i), \quad \text{ for all } n\ge1.
\end{equation}
Now define an interpolated trajectory $\overline{x}(t)$, by setting $\overline{x}(t(n)) \coloneqq x_{n}$, $n\ge0$ and define all other points $t \in [0,\infty)$ by linear interpolation.
Fix $T>0$, and define 
\begin{equation}
    T_0 = 0, \quad T_{m+1} \coloneqq \min \{t(n) : t(n) \ge T_m + T\}.
\end{equation}
Then $T_m = t(n(m)) = \sum_{i=0}^{n(m)-1} a(i)$ for an increasing sequence $n(m) \nearrow \infty$. 

Next, consider a rescaled/projected version of $\overline{x}(t)$ by defining the trajectory  
\begin{equation}
    \label{eq: projections}
    \hat{x}(t) \coloneqq \frac{\overline{x}(t)}{s(m)}, \qquad t \in [T_m,T_{m+1}),
\end{equation}
using the monotonically increasing scaling sequence
\begin{equation}
    \label{eq: scaling_seq}
    s(m) \coloneqq  \sup_{l \le m} \norm{\overline{x}(T_l)} \vee 1
\end{equation}
In addition, define the rescaled error
\begin{equation}
    \hat{e}_n \coloneqq  \frac{e_n}{s(m)}, \qquad n(m) \le n  < n(m+1),
\end{equation}
and the rescaled martingale noise 
\begin{equation}
    \hat{M}_{n+1} \coloneqq  \frac{M_{n+1}}{s(m)}, \qquad n(m) \le n < n(m+1).
\end{equation}
As outlined in the previous subsection, the first step is to show that $\hat{x}(t)$ is bounded in $L_2$ and to establish the convergence of the accumulated rescaled noise sequence 
\begin{equation}
    \label{eq: accumulated_noise}
    \hat{\zeta}_n \coloneqq \sum_{k=1}^{n-1} a(k) \hat{M}_{k+1}.
\end{equation}

\subsection{Convergence of the accumulated rescaled martingale noise}
\label{sec: martingale}

It will become useful to have a function $m(n)$ that selects for each discrete time $n\ge0$ the corresponding segment $[T_{m(n)}, T_{m(n)+1})$ in continuous time. In other words, $m(n)$ is the largest interval index $m$, such that $T_m \le  \sum_{i=0}^{n-1} a(i)$.

\Cref{asm: noise} now leads
\begin{align}
    \Ew{\norm{\hat{M}^i_{n+1}}^2 \mid \cF_n} &= \Ew{\frac{\norm{M^i_{n+1}}^2}{s(m(n))^2} \mid \cF_n}
    = \frac{\Ew{\norm{ M^i_{n+1}}^2 \mid \cF_n}}{s(m(n))^2} \\ &\le \frac{\Ew{\norm{ M_{n+1}}^2 \mid \cF_n}}{s(m(n))^2} \\ &\le \frac{K^2\left(1+ \norm{(x^1_{n- \tau_{i1}(n)}, \ldots, x^D_{n-\tau_{iD}(n)})}^2 \right)}{s(m(n))^2} \\
    &\le \frac{K^2\left(1+ \norm{x_n}^2 + \sum_{j=1}^D\norm{x^j_n - x^j_{n- \tau_{ij}(n)}}^2 \right)}{s(m(n))^2} \\
    &\le K^2\left(1+ \norm{\hat{x}(t(n)}^2 + \sum_{j=1}^D \left( \frac{\norm{x^j_n - x^j_{n- \tau_{ij}(n)}} }{s(m(n))} \right)^2 \right)
\end{align}
Notably, the second equality uses that $s(m(n))$ is $\cF_n$ measurable with $\cF_n$ as defined in \Cref{asm: noise}. By taking the expected value and the square root of the last inequality, we arrive at
\begin{align}
    \Ew{\norm{\hat{M}^i_{n+1}}^2 }^{\frac{1}{2}} \le 
    K\left(1+ \Ew{\norm{\hat{x}(t(n)}^2}^{\frac{1}{2}} + \sum_{j=1}^D \Ew{\left( \frac{\norm{x^j_n - x^j_{n- \tau_{ij}(n)}} }{s(m(n))} \right)^2}^{\frac{1}{2}} \right)
    \label{eq: L2_bound_M}
\end{align}
for all $n\ge 1$.  Further, using the Lipschitz-continuity of $h$, we can bound the rescaled additive drift errors \eqref{eq: local_drift_errors} in $L_2$ by
\begin{equation}
    \label{eq: init_error_bound}
    \Ew{\norm{\hat{e}^i_n}^2}^\frac{1}{2} \le L \sum_{j=1}^D\Ew{ \left(\frac{  \norm{x^j_n - x^j_{n- \tau_{ij}(n)}}}{s(m(n)} \right)^2}^{\frac{1}{2} }.
\end{equation}

Now divide both sides of the rewritten main iteration \eqref{eq: main_iteration_with_error} by $s(m(n))$, take the norm on both sides, and use  \eqref{eq: L1_bound_h}. Then,
\begin{equation}
    \norm{\hat{x}(t(n+1))} \le \norm{\hat{x}(t(n))} (1+a(n)K) + a(n)(1+ \norm{\hat{e}_n} + \norm{\hat{M}_{n+1}}).
\end{equation}
Finally, take $\Ew{(\cdot)^2}^\frac{1}{2}$ on both sides above and use \eqref{eq: L2_bound_M} and \eqref{eq: init_error_bound} componentwise to arrive at the following recursive $L_2$ bound for $\norm{\hat{x}(t(n))}$:
 \begin{align}
 \label{lem: distrete_L2_bound}
     \Ew{\norm{\hat{x}(t(n+1))}^2}^\frac{1}{2}  &\le \Ew{\norm{\hat{x}(t(n))}^2}^\frac{1}{2} (1+a(n)K_1) \nonumber \\ & \quad +  a(n)\left(K_1 + K_2 \Ew{\left( \frac{\norm{x^j_n - x^j_{n- \tau_{ij}(n)}} }{s(m(n))} \right)^2}^{\frac{1}{2}}\right).
 \end{align}
 with $K_1 \coloneqq  K(D+1) $ and $K_2 \coloneqq L + K$.

Next consider the local errors $x^j_n - x^j_{n- \tau_{ij}(n)}$ due to AoI. \Cref{eq: stability_error_bound_new2} leads to
\begin{align}
      \norm{x^j_n - x^j_{n- \tau_{ij}(n)}} \le K \sum_{k=n-\tau_{ij}(n)}^{n-1} a(k) \left( 1+ \norm{x_k} +  \sum_{l=1}^D \norm{x_k^l -  x^l_{k-\tau_{jl}(k)}}\right)  + \sum_{k=n-\tau_{ij}(n)}^{n-1} a(k)  \norm{M^j_k}. \label{eq: local_error_prelim_ineq_1}
\end{align}
To move forward, we have to take the expected value of \eqref{eq: local_error_prelim_ineq_1}. As discussed in the previous subsection, this is an expected value of a random number of random variables. \Cref{lem: step_size_AoI_sum} and Egorov's theorem now imply that $ \sum_{k=n-\tau_{ij}(n)}^{n-1} a(k)$ converges almost uniformly. We can therefore work with deterministic upper bounds for all $\tau_{ij}(n)$ on an increasing sequence of probability subspaces.

\begin{definition}[Almost uniform convergence]
Let $X_n, X$ be random variables on a probability space $(\Omega, \cF, \P)$.
Then $X_n$ are said to converge to $X$ almost uniformly if, for every $\varepsilon>0$, there exists an exceptional set $A \in \cF$ with $\Pr{A} < \varepsilon$ such that $X_n$ converges uniformly to X on the complement $E = \Omega \setminus A$.
\end{definition}
\noindent \textbf{Egorov's theorem}. Let $X_n, X$ be random variables. Then $X_n \to X$ almost surely if and only if $X_n \to X$ almost uniformly.

Let $(\Omega,\cF, \P)$ be the underlying probability space, i.e. $(\Omega,\cF, \P)$ is the common probability space on which the stochastic processes $\{x_n\}_{n\ge1}$, $\{M_{n+1}\}_{n\ge1}$ and  all $\{\tau_{ij}(n)\}_{n\ge1}$ are defined. Fix a sequence
$\{\varepsilon_z\}_{z\ge0} \subset (0,1)$ with $\varepsilon_z \to 0$. \Cref{lem: step_size_AoI_sum} shows that all
$\sum_{k=n-\tau_{ij}(n)}^{n-1} a(k) \to 0$ almost surely. 
Egorov's theorem thus implies that there are sets $E_0, E_1, \ldots, E_z, \ldots \in \cF$ with $\Pr{E_z} \ge 1- \varepsilon_z$, such that $\sum_{k=n-\tau_{ij}(n)}^{n-1} a(k) \to 0$ uniformly on each $E_z$. We may assume that $E_0 \subset E_1 \subset E_2 \ldots $ as uniform convergence on finite unions follows from the uniform convergence on the individual sets. 

Recall that the objective of this subsection is to show that $\Pr{ \hat{\zeta}_n \text{ converges} } = 1$.
Suppose that we can show that  $\hat{\zeta}_n$ converges on any restricted probability space $(E_z, \cF_z, \P_z)$ with sub-sigma algebra $\cF_z \coloneqq \{E_z \cap E \mid E\in \cF \}$ and restricted probability measure $\P_z \coloneqq \frac{\P \vert_{\cF_z}}{\Pr{E_z}}$
for all $z\ge 0$, i.e., suppose we can show that 
\begin{equation}
    \P_z \left(\hat{\zeta}_n \text{ converges} \right) = \frac{\P\left( E_z  \cap \{   \hat{\zeta}_n \text{ converges} \} \right)}{ \Pr{E_z}} = 1.
\end{equation}
Then, by the construction of $E_z$, we have
\begin{equation}
    \P\left( E_z  \cap \{  \hat{\zeta}_n \text{ converges} \} \right) = \Pr{E_z} \ge 1 - \varepsilon_z.
\end{equation}
Using continuity from below, as $E_z$ are increasing, it follows that
\begin{equation}
    \P\left( \bigcup_{z\ge 0} E_z  \cap \{ \hat{\zeta}_n \text{ converges} \} \right) = 1,
\end{equation}
since $\varepsilon_z \to 0$ as $z \to \infty$. Thus $\Pr{ \hat{\zeta}_n \text{ converges}} = 1$.

From the hitherto presented discussion, it follows that we can assume without loss of generality that all $\sum_{k=n-\tau_{ij}(n)}^{n-1} a(k) \to 0$ uniformly on $\Omega$. If this is not true, then the rest of this subsection shows  that $\hat{\zeta}_n$ converges on each restricted probability space $(E_z, \cF_z, \P_z)$ using that $\sum_{k=n-\tau_{ij}(n)}^{n-1} a(k) \to 0$ uniformly on each restricted probability space. Then the convergence of $\hat{\zeta}_n$ follows from the line of argument in the previous paragraph.
 
Consider the deterministic sequence
\begin{equation}
    \Delta(n) \coloneqq \sup_{\omega \in \Omega} \{\tau_{ij}(n)(\omega) \mid 1\le i,j \le D \}.
\end{equation}
The assumed uniform convergence thus implies that $\sum_{k=n-\Delta(n)}^{n-1} a(k) \to 0$ as $n \to \infty$.
Further, as $\tau_{ij}(n) \le \Delta(n)$ for all $1\le i,j \le D$ and all $n\ge 1$, \Cref{eq: local_error_prelim_ineq_1} leads to
\begin{align}
      \norm{x^j_n - x^j_{n- \tau_{ij}(n)}} \le K \sum_{k=n-\Delta(n)}^{n-1} a(k) \left( 1+ \norm{x_k} +  \sum_{l=1}^D \norm{x_k^l -  x^l_{k-\tau_{jl}(k)}}\right)  + \sum_{k=n-\Delta(n)}^{n-1} a(k)  \norm{M^j_k}. 
\end{align}
Next, divide the above inequality by $s(m(n))$ and use that $s(m(n))$ is by construction monotonically increasing. It follows that
\begin{align}
      \frac{\norm{x^j_n - x^j_{n- \tau_{ij}(n)}}}{s(m(n))} \le K \sum_{k=n-\Delta(n)}^{n-1} a(k) \left( 1+ \norm{\hat{x}_k} +  \sum_{l=1}^D \frac{\norm{x_k^l -  x^l_{k-\tau_{jl}(k)}}}{s(m(k))}\right)  + \sum_{k=n-\Delta(n)}^{n-1} a(k)  \norm{\hat{M}^j_k}. 
\end{align}
Since $\Delta(n)$ is deterministic, we can evaluate $\Ew{(\cdot)^2}^{\frac{1}{2}}$ on both sides above and apply \eqref{eq: L2_bound_M}, then
\begin{align}
    \Ew{\left( \frac{\norm{x^j_n - x^j_{n- \tau_{ij}(n)}}}{s(m(n))}\right)^2}^{\frac{1}{2}} &\le 2K \sum_{k=n-\Delta(n)}^{n-1} a(k) (1 + \Ew{\norm{ \hat{x}(t(k))}^2}^{\frac{1}{2}} )\nonumber \\
    & \qquad + 2K \sum_{k=n-\Delta(n)}^{n-1} a(k) \left( \sum_{l=1}^D \Ew{\left(\frac{\norm{x^l_k - x^l_{k- \tau_{jl}(k)}}}{s(m(k))}\right)^2}^{\frac{1}{2}} \right).
\end{align}
Finally, a summation over all $1\le i,j \le D$ leads to
\begin{align}
    \label{eq: error_iterative}
    \sum_{i,j = 1}^D \Ew{\left( \frac{\norm{x^j_n - x^j_{n- \tau_{ij}(n)}}}{s(m(n))}\right)^2}^{\frac{1}{2}} &\le  2K D^2 \sum_{k=n-\Delta(n)}^{n-1} a(k) \left(1 + \Ew{\norm{ \hat{x}(t(k))}^2}^{\frac{1}{2}} \right) \nonumber \\
    &+ 2KD \sum_{k=n-\Delta(n)}^{n-1} a(k) \sum_{i,j=1}^D \Ew{\left( \frac{\norm{x^j_k - x^j_{k- \tau_{ij}(k)}}}{s(m(k))}\right)^2}^{\frac{1}{2}}.
\end{align}
This is the announced recursive inequality of the AoI error in $L_2$. To evaluate this inequality, we propose a new backward Gronwall-type inequality with a varying lower time horizon. 
The main observation is that the sum $\sum_{k=n-\Delta(n)}^{n-1} a(k)$ leads to some weighted averaging, such that the left hand of \eqref{eq: error_iterative} can be bounded by a function of $\Ew{\norm{ \hat{x}(t(k))}^2}^{\frac{1}{2}}$, $k\le n-1$.

We give the new Gronwall-type inequality in greater generality than necessary for our analysis, since it may be of independent interest.  We present the proof in Appendix \ref{sec: appendix}.
\begin{lemma}
\label{lem: new_gronwall}
    Let $\{y_n\}$, $\{a_n\}$, $\{b_n\}$, $\{\Delta_n\}$ be non-negative sequences, $\{c_n\}$ be a  monotonically increasing non-negative sequence and $C,B >0$ be scalars, such that for all $n$,
    \begin{equation}
        y_n \le b_nc_n + C \sum_{k=n - \Delta_n}^{n-1} a_k y_k, \qquad   \sup_{n\ge 0} b_n \le B,
    \end{equation}
    \begin{equation}
        \label{eq: averaging_condition}
        N \coloneqq \inf \{ N \in \N : C \sum_{k=n - \Delta_n}^{n-1} a_k \le \frac{e^{Ct(N)} - 1}{e^{Ct(N)}} \text{ for all } n \ge N\} < \infty,
    \end{equation}
    with $t(n)$ as defined in \eqref{eq: time_instants}.
    Then
    \begin{equation}
        y_n \le c_n \left( b_n + B C e^{C t(N)} \left(\sum_{k=n - \Delta_n}^{n-1} a_k \right) \right).
    \end{equation}
\end{lemma}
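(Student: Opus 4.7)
The plan is to prove the bound in three steps: normalize the recursion by $c_n$, combine a standard discrete Gronwall argument with the averaging hypothesis \eqref{eq: averaging_condition} to obtain a uniform bound, and finally substitute the uniform bound back into the original recursion. For the first step, assuming $c_n > 0$ (the degenerate case $c_n \equiv 0$ forces $y_n \equiv 0$ by standard Gronwall), I would divide the hypothesis by $c_n$ and use the monotonicity of $c_n$ to get $c_k / c_n \le 1$ for every $k \le n$, so that, setting $u_n := y_n / c_n$,
\begin{equation}
    u_n \le b_n + C \sum_{k=n-\Delta_n}^{n-1} a_k u_k \le B + C \sum_{k=1}^{n-1} a_k u_k.
\end{equation}
Standard discrete Gronwall then yields the global growing bound $u_n \le B \exp(C t(n))$ for every $n$.

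Next I would upgrade this to the uniform bound $u_n \le B \exp(C t(N))$ valid for every $n$. For $n \le N$ the bound is immediate from the Gronwall step, since $t(n) \le t(N)$. For $n \ge N+1$ I would argue by strong induction: assuming $u_k \le B e^{C t(N)}$ for all $k \le n-1$, the averaging hypothesis \eqref{eq: averaging_condition} gives
\begin{equation}
    u_n \le B + C \Bigl( \sum_{k=n-\Delta_n}^{n-1} a_k \Bigr) B e^{C t(N)} \le B + \bigl(1 - e^{-C t(N)}\bigr) B e^{C t(N)} = B e^{C t(N)},
\end{equation}
closing the induction. With the uniform bound $y_k \le B e^{C t(N)} c_k$ in hand, substituting back into the original recursion and using $c_k \le c_n$ once more yields $y_n \le c_n \bigl( b_n + B C e^{C t(N)} \sum_{k=n-\Delta_n}^{n-1} a_k \bigr)$, as claimed.

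The main obstacle is the seamless patching between the growing Gronwall bound and the contraction-based uniform bound at the transition point $n = N$. This is the role of the slightly peculiar threshold $(e^{C t(N)} - 1)/e^{C t(N)} = 1 - e^{-C t(N)}$ in \eqref{eq: averaging_condition}: it is precisely the largest coefficient $\alpha < 1$ for which the fixed-point relation $B + \alpha \cdot B e^{C t(N)} \le B e^{C t(N)}$ holds, so the Gronwall bound at $n = N$ exactly reaches, but does not exceed, the target of the contraction argument for $n > N$, and the induction closes cleanly.
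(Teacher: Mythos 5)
Your proposal is correct and takes essentially the same route as the paper: a preliminary Gronwall bound $y_n \le B c_n e^{C t(n)}$, an inductive upgrade to the uniform bound $y_n \le B c_n e^{C t(N)}$ for $n \ge N$ using the averaging condition, and a final substitution into the original recursion. The only cosmetic difference is that you normalize by $c_n$ at the outset (working with $u_n = y_n/c_n$), whereas the paper carries $c_n$ through each step; your remark explaining why $1 - e^{-Ct(N)}$ is precisely the threshold that closes the induction is a useful clarification, not a change of method.
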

\begin{corollary}
    Consider the setting in \Cref{lem: new_gronwall} with $b_n \in o(1)$ and $\sum_{k=n - \Delta_n}^{n-1} a_k \to 0$, then $y_n \in o(c_n)$.
\end{corollary}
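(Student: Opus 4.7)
The corollary is an immediate consequence of Lemma~\ref{lem: new_gronwall}, and my plan is simply to unpack the right hand side of that bound under the extra hypotheses. Starting from the bound supplied by the lemma,
\[
y_n \le c_n \left( b_n + B C e^{C t(N)} \sum_{k=n-\Delta_n}^{n-1} a_k \right),
\]
the goal is to show that the factor in parentheses is $o(1)$. First I would note that $BC e^{Ct(N)}$ is a \emph{fixed} finite constant: $B$ and $C$ are the scalars given in the setting, and $e^{Ct(N)}<\infty$ because the setting of Lemma~\ref{lem: new_gronwall} already builds in $N<\infty$, so $t(N)=\sum_{i=1}^{N-1}a_i$ is a finite partial sum rather than something that could blow up.

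Given this constant, the two hypotheses of the corollary finish the job: $b_n\to 0$ by assumption, and $\sum_{k=n-\Delta_n}^{n-1}a_k\to 0$ by the second assumption, so the bracketed factor tends to zero. Dividing through by $c_n$ then gives $y_n/c_n\to 0$, i.e.\ $y_n\in o(c_n)$. The one minor bookkeeping point to dispatch is the trivial case where $c_n$ fails to be positive: since $\{c_n\}$ is non-negative and monotonically increasing, either $c_n>0$ for all sufficiently large $n$ (and the division above is justified for those $n$), or $c_n\equiv 0$, in which case the lemma's bound forces $y_n=0$ and the conclusion is vacuous. There is no real obstacle here; the substantive work sits entirely inside Lemma~\ref{lem: new_gronwall}, and this corollary merely records the form most convenient for the stability analysis, where $b_n$ plays the role of a vanishing residual and $\sum_{k=n-\Delta_n}^{n-1}a_k\to 0$ is precisely the AoI--stepsize coupling established in Lemma~\ref{lem: step_size_AoI_sum}.
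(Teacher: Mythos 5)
Your proof is correct and is exactly the immediate argument the paper has in mind: the paper states this corollary without proof as a direct consequence of \Cref{lem: new_gronwall}, and your unpacking of the lemma's bound, noting that $BCe^{Ct(N)}$ is a fixed finite constant and that both $b_n$ and the delayed stepsize sum vanish, is the intended route. The side remark on the degenerate case $c_n\equiv 0$ is a harmless bit of extra care not required by the paper.
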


\begin{lemma}
\label{lem: error_L2_bound}
There is constant $K_3>0$, such that
\begin{equation}
    \sum_{i,j = 1}^D \Ew{\left( \frac{\norm{x^j_n - x^j_{n- \tau_{ij}(n)}}}{s(m(n))}\right)^2}^{\frac{1}{2}}
    \le K_3 \left(1 + \sup_{k\le n-1 } \Ew{\norm{\hat{x}(t(k))}^2}^{\frac{1}{2}} \right) \left(\sum_{k=n-\Delta(n)}^{n-1} a(k) \right)
\end{equation}
\end{lemma}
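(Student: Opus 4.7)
The plan is to read Lemma \ref{lem: error_L2_bound} as a direct application of the new Gronwall-type inequality (Lemma \ref{lem: new_gronwall}) to the recursive $L_2$ inequality \eqref{eq: error_iterative}, which was established just before the statement. Set
$$y_n \coloneqq \sum_{i,j=1}^D \Ew{\left(\frac{\norm{x^j_n - x^j_{n-\tau_{ij}(n)}}}{s(m(n))}\right)^2}^{1/2}, \qquad c_n \coloneqq 1 + \sup_{k \le n-1} \Ew{\norm{\hat{x}(t(k))}^2}^{1/2}.$$
Since $c_n$ is monotonically increasing by construction, the ``forcing'' term in \eqref{eq: error_iterative} can be upper bounded by pulling $c_n$ out of the sum: $2KD^2 \sum_{k=n-\Delta(n)}^{n-1} a(k)(1 + \Ew{\norm{\hat{x}(t(k))}^2}^{1/2}) \le 2KD^2 \left(\sum_{k=n-\Delta(n)}^{n-1} a(k)\right) c_n$. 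This puts the inequality into the form
$$y_n \le b_n c_n + C \sum_{k=n-\Delta(n)}^{n-1} a(k)\, y_k, \quad \text{with } b_n \coloneqq 2KD^2 \sum_{k=n-\Delta(n)}^{n-1} a(k) \text{ and } C \coloneqq 2KD,$$
which matches the hypothesis of Lemma \ref{lem: new_gronwall} exactly, with $a_k = a(k)$ and $\Delta_n = \Delta(n)$.

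Next I would verify the two remaining hypotheses of Lemma \ref{lem: new_gronwall}. The working assumption, established in the preceding discussion via Egorov's theorem, is that $\sum_{k=n-\Delta(n)}^{n-1} a(k) \to 0$ uniformly on $\Omega$; hence there exists $B > 0$ such that $\sup_n b_n \le B$. For the averaging condition \eqref{eq: averaging_condition}, note that $C \sum_{k=n-\Delta(n)}^{n-1} a(k) \to 0$ as $n \to \infty$, while $\tfrac{e^{Ct(N)}-1}{e^{Ct(N)}} = 1 - e^{-Ct(N)} \nearrow 1$ as $N \to \infty$ because $t(N) \to \infty$ under Assumption \ref{asm: stepsize}. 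So the threshold $N$ from \eqref{eq: averaging_condition} is finite.

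Applying Lemma \ref{lem: new_gronwall} then yields
$$y_n \le c_n \left(b_n + BC\, e^{Ct(N)} \sum_{k=n-\Delta(n)}^{n-1} a(k)\right) = c_n \left(2KD^2 + 2KDB\, e^{Ct(N)}\right) \sum_{k=n-\Delta(n)}^{n-1} a(k),$$
so that the claim follows by setting $K_3 \coloneqq 2KD^2 + 2KDB\, e^{Ct(N)}$. The main obstacle in this argument is essentially conceptual rather than technical: one needs the fact that $\Delta(n)$ is \emph{deterministic}, so that the expectation can be pushed inside a summation with a fixed number of summands, and one needs $c_n$ to be monotonically increasing, so that the single $\sup$-valued factor can absorb all the per-index $L_2$ norms on the right-hand side of \eqref{eq: error_iterative}. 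Both of these features were built in already, so once the Gronwall hypotheses are checked the conclusion is immediate.
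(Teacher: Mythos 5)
Your proof is correct and follows essentially the same route as the paper: rewrite \eqref{eq: error_iterative} by pulling the $\sup$ out to obtain \eqref{eq: error_iterative2}, identify $y_n$, $b_n$, $c_n$, $C$, $a_k$, $\Delta_n$ exactly as the paper does, and apply \Cref{lem: new_gronwall}. The one thing you do that the paper leaves implicit is explicitly verify the boundedness $\sup_n b_n \le B$ and the finiteness of the threshold $N$ in \eqref{eq: averaging_condition}, which is a useful but minor elaboration.
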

\begin{proof}
    Equation \eqref{eq: error_iterative} leads to
    \begin{align}
    \label{eq: error_iterative2}
    \sum_{i,j = 1}^D \Ew{\left( \frac{\norm{x^j_n - x^j_{n- \tau_{ij}(n)}}}{s(m(n))}\right)^2}^{\frac{1}{2}} &\le  2K D^2 \left( \sum_{k=n-\Delta(n)}^{n-1} a(k) \right) \left(1 + \sup_{k\le n-1 } \Ew{\norm{ \hat{x}(t(k))}^2}^{\frac{1}{2}} \right) \nonumber \\
    &+ 2KD \sum_{k=n-\Delta(n)}^{n-1} a(k) \sum_{i,j=1}^D \Ew{\left( \frac{\norm{x^j_k - x^j_{k- \tau_{ij}(k)}}}{s(m(n))}\right)^2}^{\frac{1}{2}}.
\end{align}
    Now define $y_n \coloneqq \sum_{i,j = 1}^D \Ew{\left( \frac{\norm{x^j_n - x^j_{n- \tau_{ij}(n)}}}{s(m(n))}\right)^2}^{\frac{1}{2}}$,
    $b_n \coloneqq 2K D^2 \left( \sum_{k=n-\Delta(n)}^{n-1} a(k) \right)$, \\$c_n \coloneqq 1 + \sup_{k\le n-1 } \Ew{\norm{ \hat{x}(t(k))}^2}^{\frac{1}{2}}$, $C \coloneqq 2KD$ as well as  $a_n$ and $\Delta_n$ as evident. The lemma now immediately follows from our new Gronwall-type inequality.
\end{proof}

We are now ready to show that $\norm{\hat{x}(t)}$ is bounded in $L_2$. 

\begin{lemma}
\label{lem: main_l2_bound}
\begin{equation}
     \sup_{t\ge 0} \Ew{\norm{\hat{x}(t)}^2} < \infty 
\end{equation}
\end{lemma}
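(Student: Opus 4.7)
Denote $u_n := \Ew{\norm{\hat{x}(t(n))}^2}^{1/2}$, $U_n := \sup_{k\le n} u_k$, and $\delta_n := \sum_{k=n-\Delta(n)}^{n-1} a(k)$, which tends to zero by the uniform convergence discussed just above the lemma. Substituting \Cref{lem: error_L2_bound} into \eqref{lem: distrete_L2_bound} gives, for all $n$ large enough that \Cref{lem: error_L2_bound} applies,
$$u_{n+1} \le u_n\bigl(1 + a(n) K_1\bigr) + a(n)\bigl(K_1 + K_2 K_3 (1 + U_{n-1})\delta_n\bigr).$$
Two structural facts will drive the argument. First, by the very definition of the scaling sequence in \eqref{eq: scaling_seq}, $\norm{x_{n(m)}}/s(m) \le 1$ almost surely, so $u_{n(m)} \le 1$ at every segment boundary. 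Second, since $\delta_n \to 0$, I fix $M_0$ large enough that $\eta := K_2 K_3 \sup_{n \ge n(M_0)} \delta_n$ is arbitrarily small while $T' \ge \sum_{k=n(m)}^{n(m+1)-1} a(k)$ provides a uniform upper bound on the per-segment stepsize mass.

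Fix $m \ge M_0$ and let $S_n := \max_{n(m) \le k \le n} u_k$ on the segment. Using $U_{n-1} \le U_{n(m)-1} + S_{n-1}$ and $u_n \le S_n$, the recursion turns into
$$S_{n+1} \le S_n\bigl(1 + a(n)(K_1 + \eta)\bigr) + a(n)\bigl(K_1 + \eta + \eta U_{n(m)-1}\bigr),$$
and standard discrete Gronwall, seeded with $S_{n(m)} \le u_{n(m)} \le 1$, yields
$$V_m := S_{n(m+1)-1} \le C_1 + C_2 \, U_{n(m)-1},$$
with $C_1 := e^{(K_1+\eta)T'}\bigl(1 + T'(K_1+\eta)\bigr)$ and $C_2 := e^{(K_1+\eta)T'} T' \eta$. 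Shrinking $\eta$ via $M_0$, I arrange $C_2 < 1$.

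To close the induction across segments, I combine $U_{n(m+1)-1} = \max(U_{n(m)-1}, V_m)$ with the linear bound on $V_m$: if $U_{n(m)-1} \ge C_1/(1-C_2)$, then $V_m \le U_{n(m)-1}$ and $U_{n(m+1)-1} = U_{n(m)-1}$; otherwise $V_m \le C_1/(1-C_2)$ and $U_{n(m+1)-1} \le C_1/(1-C_2)$. Either way, $U_{n(m+1)-1} \le \max\bigl(U_{n(m)-1},\, C_1/(1-C_2)\bigr)$. Iterating in $m$ gives $\sup_{m \ge M_0} U_{n(m)-1} \le \max\bigl(U_{n(M_0)-1},\, C_1/(1-C_2)\bigr)$, and $U_{n(M_0)-1}$ is finite because $\Ew{\norm{x_0}^2} < \infty$ and finitely many Lipschitz iterates preserve finite second moments. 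Lifting the discrete bound via linear interpolation on each $[t(n), t(n+1))$ gives $\sup_{t\ge0} \Ew{\norm{\hat{x}(t)}^2} < \infty$. The main obstacle is the self-referential appearance of $U_{n-1}$ inside the AoI error bound of \Cref{lem: error_L2_bound}, which creates a feedback loop between $u_n$ and its running supremum; the resolution hinges on simultaneously exploiting the reset $u_{n(m)} \le 1$ at interval boundaries and the fact that the feedback coefficient $C_2$ can be forced strictly below one by choosing $M_0$ large.
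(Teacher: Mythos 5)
Your proof is correct and follows essentially the same strategy as the paper's: substitute the $L_2$ error bound of \Cref{lem: error_L2_bound} into \eqref{lem: distrete_L2_bound}, unroll within each $T$-segment exploiting the reset $u_{n(m)}\le 1$ and the Gronwall factor $e^{K_1(T+1)}$, and use that the feedback coefficient on $\sup_{k<n}u_k$ is a small multiple of the vanishing quantity $\sum_{k=n-\Delta(n)}^{n-1}a(k)$ so it can be driven below one. The paper stops at \eqref{eq: iteration_L2_bound} and remarks that "a formal proof can be given by induction"; your explicit introduction of $S_n$, the segment-boundary relation $V_m\le C_1+C_2U_{n(m)-1}$ with $C_2<1$, and the fixed-point iteration across segments is precisely that induction carried out in detail.
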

\begin{proof}
Fix $m\ge 0$ and $n(m) \le n < n(m+1)$. Insert the inequality from \Cref{lem: error_L2_bound} into \eqref{lem: distrete_L2_bound}, then 
\begin{align}
   \Ew{\norm{\hat{x}(t(n+1))}^2 }^{\frac{1}{2}}&\le  \Ew{\norm{\hat{x}(t(n))}^2 }^{\frac{1}{2}} (1+a(n)K_1) \\ &+  a(n) \left(K_1 + K_2 K_3\left(1+ \sup_{k\le n-1 } \Ew{\norm{ \hat{x}(t(k))}^2}^{\frac{1}{2}}\right) \left(\sum_{k=n-\Delta(n)}^{n-1} a(k) \right) 
   \right). \nonumber 
\end{align}
Keeping in mind that 
\begin{equation}
    \label{eq: sum_bound_T}
    \sum_{n=n(m)}^{n(m+1)-1} a(n) \le T+1, \quad \norm{\hat{x}(t(n(m)))} \le 1
\end{equation}
a simple recursion then shows that 
\begin{align}
    \label{eq: iteration_L2_bound}
    &\Ew{\norm{\hat{x}(t(n+1))}^2 }^{\frac{1}{2}} \le \exp({K_1(T+1)})( 1 +K_1(T+1) )\\ & \qquad + \exp({K_1(T+1)})K_2 K_3  \left( 1 + \sup_{k\le n-1 } \Ew{\norm{ \hat{x}(t(k))}^2}^{\frac{1}{2}}\right) \left(\sum_{k= n(m)}^n a(k)  \left(\sum_{l=k-\Delta(k)}^{k-1} a(l) \right)   \right), \nonumber
\end{align}
where we used that $1+a(n)K_1 \le \exp(a(n)K_1)$. As $\Ew{\norm{ \hat{x}(t(1))}^2}^{\frac{1}{2}} < \infty$, it follows from
\eqref{eq: iteration_L2_bound} that $\Ew{\norm{ \hat{x}(t(n))}^2}^{\frac{1}{2}} < \infty$ for every fixed $n\ge 0$. Furthermore, it follows from \Cref{lem: step_size_AoI_sum} and \eqref{eq: sum_bound_T} that $\sum_{k= n(m)}^n a(k)  \left(\sum_{l=k-\Delta(k)}^{k-1} a(l) \right) \in o(1)$.  The statement of the lemma is now immediate from these two conclusions and \eqref{eq: iteration_L2_bound}. A formal proof can be given by induction.
\end{proof}

We are now ready to prove the convergence of the accumulated rescaled noise iteration. 
\begin{lemma}
\label{lem: martingale_convergence}
    $\hat{\zeta}_n$ converges almost surely.
\end{lemma}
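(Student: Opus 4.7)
The plan is to identify $\hat{\zeta}_n$ as an $L^2$-bounded martingale with respect to the filtration $\{\cF_n\}$, and then apply the $L^2$ martingale convergence theorem. Throughout, per the discussion preceding \Cref{lem: error_L2_bound}, we work on a generic restricted probability space $(E_z, \cF_z, \P_z)$ on which $\sum_{k=n-\tau_{ij}(n)}^{n-1} a(k) \to 0$ uniformly (hence $\sum_{k=n-\Delta(n)}^{n-1} a(k) \to 0$); once convergence is established on every such $E_z$, the almost sure convergence on $\Omega$ follows by continuity from below exactly as already outlined.

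First, I would verify the martingale property. Since $a(k)$ is deterministic and $s(m(k))$ is $\cF_k$-measurable (it is a supremum of norms of past interpolated iterates), we have
\begin{equation}
    \Ew{a(k)\hat{M}_{k+1} \mid \cF_k} = \frac{a(k)}{s(m(k))}\,\Ew{M_{k+1}\mid \cF_k} = 0
\end{equation}
by \Cref{asm: noise}.1. Hence $\{\hat{\zeta}_n\}$ is a martingale in $\R^d$ with orthogonal increments, and by the standard orthogonality identity
\begin{equation}
    \Ew{\norm{\hat{\zeta}_n}^2} = \sum_{k=1}^{n-1} a(k)^2\,\Ew{\norm{\hat{M}_{k+1}}^2}.
\end{equation}

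Next, I would show that $\sup_{k \ge 1}\Ew{\norm{\hat{M}_{k+1}}^2} < \infty$. Starting from \eqref{eq: L2_bound_M} applied componentwise and using $(a+b+c)^2 \le 3(a^2+b^2+c^2)$, one obtains
\begin{equation}
    \Ew{\norm{\hat{M}_{k+1}}^2} \le C_1 \left( 1 + \Ew{\norm{\hat{x}(t(k))}^2} + \Bigl( \sum_{i,j=1}^D \Ew{\Bigl(\tfrac{\norm{x^j_k - x^j_{k-\tau_{ij}(k)}}}{s(m(k))}\Bigr)^2}^{1/2}\Bigr)^2 \right)
\end{equation}
for a suitable constant $C_1$. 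The first term is uniformly bounded by \Cref{lem: main_l2_bound}, while \Cref{lem: error_L2_bound} combined with the same $L^2$ bound and $\sum_{k=n-\Delta(n)}^{n-1} a(k) \to 0$ (hence bounded) shows that the bracketed error sum is uniformly bounded as well. Therefore $\sup_k \Ew{\norm{\hat{M}_{k+1}}^2} \le K_4$ for some finite constant $K_4$.

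Combining these two steps with the square-summability of the stepsize (\Cref{asm: stepsize}),
\begin{equation}
    \sup_n \Ew{\norm{\hat{\zeta}_n}^2} \le K_4 \sum_{k=1}^{\infty} a(k)^2 < \infty,
\end{equation}
so $\{\hat{\zeta}_n\}$ is an $L^2$-bounded martingale. The $L^2$ martingale convergence theorem (applied componentwise) then yields the almost sure convergence of $\hat{\zeta}_n$ on $E_z$, and the promotion from each $E_z$ to $\Omega$ via continuity from below (as already set up in \Cref{sec: martingale}) completes the argument. The most delicate point is not any single estimate but rather confirming that the uniform bound on $\Ew{\norm{\hat{M}_{k+1}}^2}$ is genuinely available: it hinges on the prior two lemmas plus the uniform convergence $\sum_{k=n-\Delta(n)}^{n-1} a(k) \to 0$, which is why we argued on the restricted spaces $E_z$ rather than directly on $\Omega$.
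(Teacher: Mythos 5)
Your proof is correct and follows essentially the same route as the paper: both arguments reduce to showing $\sum_{n\ge1} a(n)^2\,\Ew{\norm{\hat M_{n+1}}^2} < \infty$ via \eqref{eq: L2_bound_M}, \Cref{lem: error_L2_bound}, \Cref{lem: main_l2_bound}, and square-summability of the stepsize, then invoke a martingale convergence theorem. The only cosmetic difference is that you package the estimate as an $L^2$-bound on $\hat\zeta_n$ (via the orthogonal-increments identity) and cite Doob's $L^2$ martingale convergence theorem, whereas the paper passes the same finite expectation through Tonelli to get $\sum_n \Ew{\norm{a(n)\hat M_{n+1}}^2\mid\cF_n}<\infty$ a.s.\ and applies the conditional-quadratic-variation version (\Cref{martingale_conv}) — the two routes are interchangeable here.
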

\begin{proof}
By the convergence theorem for square-integrable martingales \cite[Appendix C, Theorem 11]{borkar2009stochastic}, it is enough to show that
$\sum_{n\ge1} \Ew{\norm{a(n) \hat{M}_{n+1}}^2 \mid \cF_n } < \infty \as$.
We have
\begin{align}
    &\Ew{\sum_{n\ge1} a(n)^2 \Ew{\norm{\hat{M}_{n+1}}^2 \mid \cF_n }} = \sum_{n\ge1} a(n)^2 \Ew{\norm{\hat{M}_{n+1}}^2 } \\
    &\le \sum_{n\ge1} a(n)^2 \left( KD\left(1 + \Ew{\norm{\hat{x}(t(n))^2 }}^{\frac{1}{2}} \right) 
    + K \sum_{i,j= 1}^D\Ew{\left(\frac{\norm{x^j_n - x^j_{n- \tau_{ij}(n)}}}{s(m(n))}\right)^2}^{\frac{1}{2}} \right)^2 
    \label{eq: acc_noise_bound},
\end{align}
using \eqref{eq: L2_bound_M}. \Cref{lem: error_L2_bound},
\Cref{lem: main_l2_bound} and the square summability of $a(n)$ therefore imply that \eqref{eq: acc_noise_bound} is finite and the statement follows.
\end{proof}
A corollary to \Cref{lem: martingale_convergence} will become useful.
\begin{corollary}
\label{cor: noise_L1_error}
    $\frac{1}{s(m(n))} \sum_{k= n-\tau_{ij}(n)}^{n-1} a(k) M^j_{k+1} \to 0 \as$
\end{corollary}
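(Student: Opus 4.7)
The plan is to reduce the stated convergence to the almost sure Cauchy property of the accumulated rescaled noise $\hat{\zeta}^j_n$ (which converges by \Cref{lem: martingale_convergence}) via a summation-by-parts (Abel) argument that absorbs the mismatch between the outer normalizer $s(m(n))$ and the ``correct'' normalizers $s(m(k))$ appearing in $\hat{M}^j_{k+1}$. Setting $n_0 := n-\tau_{ij}(n)$ and $w_{n,k} := s(m(k))/s(m(n))$, the key algebraic identity is
\begin{equation}
    \frac{1}{s(m(n))}\sum_{k=n_0}^{n-1} a(k)\, M^j_{k+1} \;=\; \sum_{k=n_0}^{n-1} w_{n,k}\,\bigl(\hat{\zeta}^j_{k+1}-\hat{\zeta}^j_{k}\bigr).
\end{equation}
By construction $s(m(\cdot))$ is monotonically non-decreasing (see \eqref{eq: scaling_seq}), hence $w_{n,k}\in (0,1]$ is non-decreasing in $k$ for each fixed $n$.

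Introduce the partial sums $V_k := \hat{\zeta}^j_{k+1}-\hat{\zeta}^j_{n_0}$ with the convention $V_{n_0-1}=0$. Abel summation gives
\begin{equation}
    \sum_{k=n_0}^{n-1} w_{n,k}\,(V_k - V_{k-1}) \;=\; w_{n,n-1}\,V_{n-1} \;-\; \sum_{k=n_0}^{n-2}\bigl(w_{n,k+1}-w_{n,k}\bigr) V_k .
\end{equation}
Using $w_{n,n-1}\le 1$, $w_{n,k+1}-w_{n,k}\ge 0$, and the telescoping identity $\sum_{k=n_0}^{n-2}(w_{n,k+1}-w_{n,k}) = w_{n,n-1}-w_{n,n_0}\le 1$, the right-hand side is bounded in absolute value by $2\,\sup_{n_0\le k\le n-1}|V_k|$. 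It therefore suffices to show that this supremum tends to zero almost surely.

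To this end, combine two ingredients available on a single full-measure event. First, \Cref{lem: martingale_convergence} states that $\hat{\zeta}^j_n$ converges almost surely, and is therefore almost surely Cauchy. Second, the argument in the proof of \Cref{lem: step_size_AoI_sum} (invoking \Cref{lem: aoi_bound}) yields $\tau_{ij}(n)/n \to 0$ a.s., so $n_0 = n-\tau_{ij}(n)\to\infty$ a.s. For a.e.\ $\omega$ and any $\varepsilon>0$, the Cauchy property gives an $N(\omega)$ with $|\hat{\zeta}^j_m(\omega)-\hat{\zeta}^j_l(\omega)|<\varepsilon$ whenever $m,l\ge N(\omega)$, while eventually $n_0(n)(\omega)\ge N(\omega)$; then for every $k\ge n_0(n)(\omega)$, both indices $k+1$ and $n_0(n)(\omega)$ exceed $N(\omega)$, so $|V_k(\omega)|<\varepsilon$. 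Hence $\sup_{k\ge n_0}|V_k|\to 0$ a.s., which in view of the bound above completes the proof.

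The main obstacle is precisely the mismatch between the index $n$ of the outer normalizer and the indices $k$ of the inner noise terms: one cannot simply identify the quantity with $\hat{\zeta}^j_n-\hat{\zeta}^j_{n_0}$. Abel summation is the natural device to exploit the monotonicity of $s(m(\cdot))$, turning the weights into a bounded-variation sequence whose total variation is at most $1$, so that the Cauchy tail of $\hat{\zeta}^j$ controls the whole expression uniformly.
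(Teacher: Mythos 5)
Your proof is correct, and it takes a genuinely different route from the paper's. The paper's proof argues on the full partial sum $\frac{1}{s(m(n))}\sum_{k=1}^{n} a(k) M^j_{k+1} = \frac{1}{s(m(n))}\sum_{k=1}^{n} s(m(k))a(k)\hat{M}^j_{k+1}$ and splits into two sample-path cases: if $s(m(n))$ stays bounded it invokes Abel's test for series (monotone bounded multiplier times a convergent series) to get convergence of the numerator; if $s(m(n))\nearrow\infty$ it invokes Kronecker's lemma; in either case the quantity converges, and $n-\tau_{ij}(n)\to\infty$ then lets one pass from the full sum to the tail. You instead attack the tail directly via summation by parts: writing the tail as $\sum_{k=n_0}^{n-1} w_{n,k}(\hat{\zeta}^j_{k+1}-\hat{\zeta}^j_k)$ with $w_{n,k}=s(m(k))/s(m(n))\in(0,1]$ non-decreasing in $k$, the total variation of the weights is at most $1$, so the expression is bounded by twice the Cauchy tail $\sup_{k\ge n_0}|\hat{\zeta}^j_{k+1}-\hat{\zeta}^j_{n_0}|$, which vanishes because $\hat{\zeta}^j$ converges a.s.\ and $n_0\to\infty$ a.s. The two approaches are close cousins (Kronecker's lemma is itself an Abel-summation argument), but yours avoids the case split entirely and works uniformly over sample paths; it is also slightly more self-contained in that it handles the varying lower summation limit head-on rather than subtracting two applications of the full-sum result. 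One small remark: the key property of $s(m(\cdot))$ you use is monotonicity, not any specific formula, which makes it clear this is exactly why the monotone scaling sequence \eqref{eq: scaling_seq} was introduced.
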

\begin{proof}
    We have that
    \begin{equation}
        \frac{1}{s(m(n))} \sum_{k= 1}^{n} a(k) M^j_{k+1}
        =  \frac{1}{s(m(n))} \sum_{k= 1}^{n}s(m(k)) a(k) \hat{M}^j_{k+1}.
    \end{equation}
    Now for every sample point, there are two scenarios: 
    \begin{enumerate}
        \item $s(m(n))$ is bounded, then $\sum_{k= 1}^{n}s(m(k)) a(k) \hat{M}^j_{k+1}$ converges by Abel's test for infinite series.
        \item $s(m(n))$ is unbounded, then $\frac{1}{s(m(n))} \sum_{k= 1}^{n}s(m(k)) a(k) \hat{M}^j_{k+1} \to 0$ by Kronecker's lemma.
    \end{enumerate}
    It follows that $\frac{1}{s(m(n))} \sum_{k= 1}^{n} a(k) M^j_{k+1}$ converges almost surely. As all $n-\tau_{ij}(n) \to \infty \as$, the lemma follows.
\end{proof}

\subsection{Stability of the rescaled trajectory}

We are now ready to show that $\hat{x}(t)$ is stable.
Recall the functions $h_c(x)$, $c\in [1,\infty]$,  defined in (A5). It is not difficult to verify that:
\begin{enumerate}
    \item $h_c$, $h_\infty$ are Lipschitz-continuous with the same Lipschitz constant $L>0$ as $h$.
    \item $\norm{h_c(x)} \le K (1 + \norm{x})$ for every $c \in [1,\infty]$.
\end{enumerate}

\begin{lemma}
\label{lem: main_L1_bound}
$\sup_{t\ge 0} \norm{\hat{x}(t)} < \infty \as.$
\end{lemma}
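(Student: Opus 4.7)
The strategy is to derive a pathwise recursive inequality for the running maximum $U_m:=\sup_{k\le n(m)}\norm{\hat{x}(t(k))}$ across the intervals $[T_m,T_{m+1}]$ and to close it using the AoI step-size decay (\Cref{lem: step_size_AoI_sum}) together with \Cref{lem: new_gronwall} applied pathwise. Inside a single interval the scaling is constant $s(m)$ and the iteration $\hat{x}(t(n+1))=\hat{x}(t(n))+a(n)[h_{s(m)}(\hat{x}(t(n)))+\hat{e}_n+\hat{M}_{n+1}]$ has drift satisfying $\norm{h_{s(m)}(x)}\le K(1+\norm{x})$. Setting $\xi_n:=\hat{\zeta}_n-\hat{\zeta}_{n(m)}$, which by \Cref{lem: martingale_convergence} is uniformly bounded by some a.s.-finite random $2Z$, subtracting it off and applying the discrete Gronwall inequality together with $\sum_{n(m)}^{n(m+1)-1} a(k)\le T+1$ and $\norm{\hat{x}(T_m)}\le 1$ yields, for some a.s.-finite constants $C_0,C_1$,
\begin{equation*}
\norm{\hat{x}(t(n))}\le C_0 + C_1\sum_{k=n(m)}^{n-1} a(k)\norm{\hat{e}_k}, \qquad n(m)\le n\le n(m+1).
\end{equation*}

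Next I would establish the pathwise analogue of \Cref{lem: error_L2_bound} for $y_n:=\sum_{i,j=1}^D\norm{x^j_n-x^j_{n-\tau_{ij}(n)}}/s(m(n))$, for which $\norm{\hat{e}_n}\le L y_n$. Dividing \eqref{eq: stability_error_bound_new2} by $s(m(n))$, using the monotonicity $s(m(k))\le s(m(n))$ for $k\le n$ together with $s(m(k))\ge 1$, summing over $i,j$, and putting $\Delta(n)(\omega):=\max_{ij}\tau_{ij}(n)(\omega)$ and $U_{n-1}:=\sup_{l\le n-1}\norm{\hat{x}(t(l))}$,
\begin{equation*}
y_n\le KD^2 (1+U_{n-1})\sum_{k=n-\Delta(n)}^{n-1} a(k) + N_1(n) + KD\sum_{k=n-\Delta(n)}^{n-1} a(k)\,y_k,
\end{equation*}
where $N_1(n)\to 0$ a.s. by \Cref{cor: noise_L1_error}. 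Since $c_n:=1+U_{n-1}$ is monotone non-decreasing and $\sum_{k=n-\Delta(n)}^{n-1} a(k)\to 0$ a.s. by \Cref{lem: step_size_AoI_sum} (so the averaging hypothesis \eqref{eq: averaging_condition} holds pathwise on a probability-one set), \Cref{lem: new_gronwall} applied sample-path by sample-path delivers $y_n\le(1+U_{n-1})\delta_n$ with $\delta_n\to 0$ a.s.

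Plugging the $y_n$-bound back into the iteration bound and estimating crudely,
\begin{equation*}
\norm{\hat{x}(t(n))}\le C_0 + C_1 L(T+1)(1+U_{n(m+1)})\sup_{k\ge n(m)}\delta_k, \qquad n(m)\le n\le n(m+1).
\end{equation*}
Choose $\varepsilon>0$ with $C_1 L(T+1)\varepsilon<1/2$ and let $M_0$ (a.s. finite) be such that $\sup_{k\ge n(M_0)}\delta_k\le\varepsilon$. Taking the supremum over $n\in[n(m),n(m+1)]$ for $m\ge M_0$ produces the dichotomy $U_{n(m+1)}\le\max\bigl(U_{n(m)},\,2C_0+1\bigr)$: either the new interval does not raise the running maximum, or $U_{n(m+1)}\le C_0+\tfrac{1}{2}(1+U_{n(m+1)})$, forcing $U_{n(m+1)}\le 2C_0+1$. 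Iterating from $M_0$ gives $\sup_m U_{n(m)}<\infty$ a.s., and piecewise linearity of $\hat{x}$ on each $[T_m,T_{m+1})$ together with the rebase $\norm{\hat{x}(T_{m+1})}\le 1$ extends this to $\sup_{t\ge 0}\norm{\hat{x}(t)}<\infty$ a.s.

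The main obstacle is the middle step: the recursion for $y_n$ has to be handled purely pathwise, because the restricted-probability-space trick from \Cref{sec: martingale} (which permitted the deterministic envelope $\Delta(n)$) is unavailable when the target is an almost-sure norm bound; the enabling ingredient is exactly \Cref{lem: step_size_AoI_sum}, which supplies the averaging hypothesis \eqref{eq: averaging_condition} pathwise a.s. A second, structural subtlety is that the monotone scaling $s(m)$ rules out the classical BMT contradiction via a divergent subsequence; the closing dichotomy $U_{n(m+1)}\le\max(U_{n(m)},2C_0+1)$ is the first concrete manifestation of the new line of argument announced in \Cref{sec: outline}.
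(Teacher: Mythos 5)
Your proof is correct and follows essentially the same route as the paper's: the discrete Gronwall bound inside each $T$-interval, the pathwise recursion for $y_n$ with the random envelope $\max_{ij}\tau_{ij}(n)$ closed by \Cref{lem: new_gronwall}, and a final half-contraction for the running maximum. The explicit dichotomy $U_{n(m+1)}\le\max(U_{n(m)},2C_0+1)$ is just a slightly more spelled-out version of the paper's inequality $\norm{\hat{x}(t(n))}\le K_5+\tfrac{1}{2}\sup_{k<n}\norm{\hat{x}(t(k))}$.
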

\begin{proof}
Fix $m>0$ and $n(m) \le n < n(m+1)$. The rescaled iteration can be written as
\begin{equation}
    \label{eq: projected_iteration}
    \hat{x}(t(n+1)) = \hat{x}(t(n)) + a(n) \left[ h_{s(m)}(\hat{x}(t(n))) + \hat{e}_n + \hat{M}_{n+1} \right],
\end{equation}
using the functions $h_c(\cdot)$ from (A5). Moreover, 
\begin{equation}
    \label{eq: projected_iteration_bound_1}
    \norm{\hat{e}_n} \le L \sum_{i,j=1}^D  \frac{\norm{x_n^j - x^j_{n-\tau_{ij}(n)}} }{s(m)}
\end{equation}
We will now define a point-wise upper bound for all $\tau_{ij}(n)$. Define
\begin{equation}
    \tau(n) \coloneqq \max \{\tau_{ij}(n) \mid 1 \le i,j \le D \}.
\end{equation}
Notice that $\tau(n)$ is a stochastic bound in contrast to the deterministic bound used in \Cref{lem: main_l2_bound}. 
As the number of iterations is finite ($D < \infty$), it follows from \Cref{lem: step_size_AoI_sum} that $\sum_{k= n-\tau(n)}^{n-1} a(k) \to 0 \as$.
Starting from \eqref{eq: stability_error_bound_new2}, we now obtain that
\begin{align}
    \label{eq: L1_error_iterative}
    \sum_{i,j=1}^D \frac{\norm{x_n^j - x^j_{n-\tau_{ij}(n)}}}{s(m(n))} &\le KD^2 \left(\sum_{k= n-\tau(n)}^{n-1} a(k) \right) (1+ \sup_{k \le n-1} \norm{\hat{x}(t(k))})\nonumber
    \\ &+ K D \sum_{k= n-\tau(n)}^{n-1} a(k) \sum_{i,j=1}^D \frac{\norm{x_k^j - x^j_{k-\tau_{ij}(k)}}}{s(m(k))}  \nonumber \\  &+ \sum_{i,j=1}^D \frac{1}{s(m(n))}\norm{\sum_{k= n-\tau_{ij}(n)}^{n-1} a(k) M^j_{k+1}},
\end{align}
where we again used the monotonicity of $s(m(n))$. Please notice the similarity to \eqref{eq: error_iterative2}. Further, the third term in \eqref{eq: L1_error_iterative} converges to zero by \Cref{cor: noise_L1_error}. Using the new Gronwall-type inequality \Cref{lem: new_gronwall} it follows that there is a constant $K_4>0$, such that
\begin{align}
\label{eq: projected_iteration_bound_2}
    &\sum_{i,j=1}^D \frac{\norm{x_n^j - x^j_{n-\tau_{ij}(n)}}}{s(m(n))} \le K_4\left( 1+ \sup_{k \le n-1} \norm{\hat{x}(t(k))}\right) \left(\sum_{k= n-\tau(n)}^{n-1} a(k) \right)
\end{align}

Iterating the rescaled iteration \eqref{eq: projected_iteration} now yields, for $ 0 < k \le n(m+1) - n(m)$,
\begin{equation}
    \begin{split}
        \hat{x}(t(n(m)+k)) &= \hat{x}(t(n(m))) + \sum_{i=0}^{k-1} a(n(m) + i)  h_{s(m)}(\hat{x}(t(n(m)+i))) \\ & + \sum_{i=0}^{k-1} a(n(m) + i) \hat{e}_{n(m)+i}  + \left(\hat{\zeta}_{n(m)+k} - \hat{\zeta}_{n(m)} \right),
    \end{split}
\end{equation}
with
\begin{equation}
    \norm{h_{s(m)}(\hat{x}(t(n(m)+i)))} \le K\left( 1 + \norm{\hat{x}(t(n(m)+i))} \right).
\end{equation}
Further, \Cref{lem: martingale_convergence} implies that $\sup_{n\ge1} \norm{\hat{\zeta}_n} < \infty \as.$ Hence, there is sample path dependent constant $B>0$ such that $\sup_{n\ge1} \norm{\hat{\zeta}_n} \le B$.
Then, since $\sum_{0 \le i < n(m+1) - n(m)} a(n(m) + i) \le T+1$ and $\norm{\hat{x}(t(n(m)))} \le 1$, it  follows that
\begin{equation}
    \begin{split}
        \norm{\hat{x}(t(n(m)+k))} &\le 1 + K(T+1)  + K\sum_{i=0}^{k-1} a(n(m) + i) \norm{\hat{x}(t(n(m)+i))} \\
        &+ \sum_{i=0}^{k-1} a(n(m) + i) \norm{\hat{e}_{n(m)+i}} + 2B
    \end{split}
\end{equation}
The traditional discrete Gronwall inequality, \Cref{gronwall_ineq}, now shows that
\begin{equation}
    \label{eq: L1_gronwall_bound}
    \norm{\hat{x}(t(n(m)+k))} \le \left( 1 + K(T+1) + 2B + \sum_{i=0}^{k-1} a(n(m) + i) \norm{\hat{e}_{n(m)+i}}\right) \exp(K(T+1))
\end{equation}
for $ 0 < k \le n(m+1) - n(m)$. The combination of \eqref{eq: projected_iteration_bound_1}, \eqref{eq: projected_iteration_bound_2} and \eqref{eq: L1_gronwall_bound} then yields that
\begin{align}
    &\norm{\hat{x}(t(n(m)+k))} \le \left( 1 + K(T+1) + 2B \right) \exp(K(T+1)) \\
    & + \exp(K(T+1)) \sum_{i=0}^{k-1} a(n(m) + i) L K_4 \left( 1+ \sup_{k \le n(m)+i-1} \norm{x_k} \right)\left(\sum_{k= n(m)+i-\tau(n(m)+i)}^{n(m)+i-1} a(k) \right)  \nonumber
\end{align}
\Cref{lem: step_size_AoI_sum} therefore implies that 
\begin{equation}
    \norm{\hat{x}(t(n)} \le K_5 + \frac{1}{2} \sup_{k < n} \norm{\hat{x}(t(k))}  
\end{equation}
for large $n$ and some $K_5>0$. The lemma now follows. As in \Cref{lem: main_l2_bound}, a formal proof can be given by induction.
\end{proof}
\begin{corollary}
\label{cor: L1_error_to_zero}
$\norm{\hat{e}_n} \to 0 \as$
\end{corollary}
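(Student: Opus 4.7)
The plan is to combine the rescaled drift-error bound already derived in the proof of \Cref{lem: main_L1_bound} with the almost sure stability of the rescaled trajectory itself. First I would recall the defining bound for the rescaled drift error: using the Lipschitz-continuity of $h$, exactly as in \eqref{eq: projected_iteration_bound_1}, we have
\begin{equation}
    \norm{\hat{e}_n} \le L \sum_{i,j=1}^D \frac{\norm{x^j_n - x^j_{n-\tau_{ij}(n)}}}{s(m(n))}.
\end{equation}

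Next I would plug in the recursive bound \eqref{eq: projected_iteration_bound_2}, which was obtained by applying the new Gronwall-type inequality (\Cref{lem: new_gronwall}) to the in-norm recursion \eqref{eq: L1_error_iterative}. This gives
\begin{equation}
    \norm{\hat{e}_n} \le L K_4 \left(1 + \sup_{k \le n-1} \norm{\hat{x}(t(k))}\right) \left(\sum_{k=n-\tau(n)}^{n-1} a(k)\right) \as,
\end{equation}
with $\tau(n) = \max_{i,j} \tau_{ij}(n)$. Both factors on the right are now controlled: by \Cref{lem: main_L1_bound}, the supremum $\sup_{k \le n-1} \norm{\hat{x}(t(k))}$ is finite almost surely; and since $D < \infty$, \Cref{lem: step_size_AoI_sum} applied to each pair $(i,j)$ yields $\sum_{k=n-\tau(n)}^{n-1} a(k) \to 0$ almost surely. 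Multiplying an almost surely bounded sample path by an almost surely vanishing sample path gives $\norm{\hat{e}_n} \to 0$ almost surely.

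There is essentially no obstacle to this corollary because the heavy lifting has already been done in \Cref{lem: main_L1_bound} (boundedness of $\hat{x}(t)$) and in \Cref{lem: step_size_AoI_sum} combined with \Cref{lem: new_gronwall} (the sharp control on the rescaled AoI error). The only minor point to check is the step from the pair-wise convergence $\sum_{k=n-\tau_{ij}(n)}^{n-1} a(k) \to 0$ to the joint convergence with the maximum AoI $\tau(n)$, which is immediate since the maximum is taken over the finite index set $\{(i,j) : 1 \le i,j \le D\}$ and each summand sequence is monotone in its upper AoI argument.
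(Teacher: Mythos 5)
Your proposal is exactly the paper's proof: combine \eqref{eq: projected_iteration_bound_1} with \eqref{eq: projected_iteration_bound_2}, then invoke \Cref{lem: main_L1_bound} for the almost sure boundedness factor and \Cref{lem: step_size_AoI_sum} (applied across the finitely many index pairs) for the vanishing stepsize-sum factor. Nothing is missing, and the argument matches the paper line for line.
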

\begin{proof}
    Combine \eqref{eq: projected_iteration_bound_1} and \eqref{eq: projected_iteration_bound_2}, then apply  \Cref{lem: step_size_AoI_sum} and \Cref{lem: main_L1_bound}.
\end{proof}

\subsection{Stability Theorem}

We can now conclude that the rescaled trajectory is a noisy approximation of solutions to ODEs with drift $h_{s(m)}(\cdot)$.
For any $m\ge0 $, let $x^m(t)$, $t\in [T_m,T_{m+1}]$, be the unique solution to the ODE
\begin{equation}
    \label{eq: scaledODE}
    \dot{x}(t) = h_{s(m)} (x(t))
\end{equation}
with initial condition $x^m(T_m) = \hat{x}(T_m)$.
Recall again that the rescaled iteration can be written as
\begin{equation}
\hat{x}(t(n+1)) = \hat{x}(t(n)) + a(n) \left[ h_{s(m(n))}(\hat{x}(t(n))) + \hat{e}_n + \hat{M}_{n+1} \right],
\end{equation}
for every $n \ge 1$.
\Cref{lem: martingale_convergence} and \Cref{cor: L1_error_to_zero} imply that the rescaled iteration $\hat{x}(t(n))$ has the form of a standard stochastic approximation iteration with convergent accumulated noise $\hat{\zeta}_n$ and vanishing additive error $\hat{e}_n$, respectively. Further, \Cref{lem: main_L1_bound} shows that the projected iteration remains bounded almost surely. 
A standard stochastic approximation argument, 
then shows that $\hat{x}(t)$ is a noisy approximation of solutions to the ODEs \eqref{eq: scaledODE}.
\begin{lemma}
\label{lem: SA_lemma}
$$\lim\limits_{m\to \infty} \sup\limits_{t\in [T_m,T_{m+1}]} \norm{\hat{x}(t) - x^m(t)} = 0 \as$$ 
\end{lemma}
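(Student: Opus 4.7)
My plan is to run the standard discrete-versus-continuous comparison argument on each segment $[T_m,T_{m+1}]$, treating the rescaled drift $h_{s(m)}$ as fixed on that segment. The favourable structure here is that (i) the scaling $s(m)$ is constant on $[T_m,T_{m+1})$, so $\hat{x}(t(n))$ satisfies a bona fide SA recursion with drift $h_{s(m)}(\cdot)$ and Lipschitz constant $L$ (independent of $m$), and (ii) the two "error" contributions (namely $\hat e_n$ and $\hat{M}_{n+1}$) have already been shown to be negligible in a strong sense in \Cref{cor: L1_error_to_zero} and \Cref{lem: martingale_convergence}. So the task is to package these ingredients into a Gronwall-type estimate uniformly on $[T_m,T_{m+1}]$ and let $m\to\infty$.

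Concretely, I would first write, for $n(m)\le n \le n(m+1)$,
\begin{equation*}
\hat{x}(t(n)) - x^m(t(n)) = \sum_{k=n(m)}^{n-1} a(k)\bigl[h_{s(m)}(\hat{x}(t(k))) - h_{s(m)}(x^m(t(k)))\bigr] + R_m(n),
\end{equation*}
where $R_m(n)$ collects three pieces: the Riemann-sum-versus-integral discretization of $\int_{T_m}^{t(n)} h_{s(m)}(x^m(s))\,ds$, the accumulated drift error $\sum_{k=n(m)}^{n-1} a(k)\hat e_k$, and the martingale tail $\hat\zeta_n - \hat\zeta_{n(m)}$. I would then show that $\sup_{n(m)\le n \le n(m+1)} \norm{R_m(n)} \to 0$ almost surely: the discretization piece is $O(\max_{n(m)\le k<n(m+1)} a(k))$ because $h_{s(m)}(x^m(\cdot))$ is uniformly bounded on bounded sets (using \Cref{lem: main_L1_bound} and $\norm{h_c(x)}\le K(1+\norm{x})$ together with Gronwall applied to the ODE itself on a $T$-length interval); the drift-error piece is bounded by $(T+1)\sup_{n(m)\le k<n(m+1)} \norm{\hat e_k} \to 0$ by \Cref{cor: L1_error_to_zero}; the martingale piece vanishes because $\hat\zeta_n$ converges almost surely, hence its oscillation over $[n(m),n(m+1)]$ goes to zero.

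With $R_m(n)$ controlled, the Lipschitz property of $h_{s(m)}$ (with constant $L$ independent of $m$) gives the recursive inequality
\begin{equation*}
\norm{\hat{x}(t(n)) - x^m(t(n))} \le \sup_{n(m)\le k \le n(m+1)}\norm{R_m(k)} + L\sum_{k=n(m)}^{n-1} a(k)\,\norm{\hat{x}(t(k)) - x^m(t(k))},
\end{equation*}
and the discrete Gronwall lemma (\Cref{gronwall_ineq}) yields $\norm{\hat{x}(t(n)) - x^m(t(n))} \le \sup_k \norm{R_m(k)}\cdot e^{L(T+1)}$ uniformly in $n\in[n(m),n(m+1)]$. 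Since the right-hand side tends to $0$ almost surely as $m\to\infty$, this establishes the tracking at the grid points $t(n)$.

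Finally, to extend the supremum from the grid $\{t(n)\}$ to all $t\in [T_m,T_{m+1}]$, I would use that $\hat{x}$ is piecewise linear between consecutive $t(n)$ with slopes bounded by $\norm{h_{s(m)}(\hat{x}(t(n))) + \hat e_n + \hat{M}_{n+1}}$, which is uniformly bounded on bounded sets via \Cref{lem: main_L1_bound} and the linear-growth bound on $h_c$, while $x^m$ is Lipschitz on $[T_m,T_{m+1}]$ with constant $K(1+\sup_{t\in[T_m,T_{m+1}]}\norm{x^m(t)})$, again uniform in $m$ because $\hat x(T_m)$ is bounded. Hence the interpolation error between grid points is $O(\max_{n(m)\le k <n(m+1)} a(k))\to 0$. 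The main obstacle I anticipate is the bookkeeping needed to make every bound uniform in $m$; the mechanism itself is identical to the standard stochastic approximation tracking argument (e.g.\ \cite[Chapter 2, Lemma 1]{borkar2009stochastic}), and once the uniformity of constants (Lipschitz constant $L$ of $h_c$, linear-growth constant $K$ of $h_c$, a.s.\ bound on $\hat{x}$) is verified, the conclusion follows.
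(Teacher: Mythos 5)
Your proposal is the standard SA tracking argument (Gronwall on the discrete-versus-continuous error, using boundedness of $\hat{x}$, vanishing $\hat e_n$, and convergence of $\hat\zeta_n$, with Lipschitz/growth constants of $h_c$ uniform in $c$), which is precisely what the paper invokes without spelling out — it simply states that a "standard stochastic approximation argument" applied to \Cref{lem: martingale_convergence}, \Cref{cor: L1_error_to_zero}, and \Cref{lem: main_L1_bound} yields \Cref{lem: SA_lemma}, deferring to \cite[Chapter 2]{borkar2009stochastic}. Your filled-in details correctly reproduce that argument, so the approach is essentially identical.
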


Assumption (A5) ensures the existence of a function $h_\infty(x)$, such that $h_c(x) \to h_\infty(x)$ uniformly on compact sets as $c\to\infty$. Further, $h_\infty$ has the origin as its unique globally asymptotically stable equilibrium. Therefore solutions to $\dot{x}(t) = h_\infty(x)$ will eventually reach a neighborhood of the origin after some $T >0$.  

Recall that the functions $h_c$ inherit the Lipschitz-continuity from $h.$ Thus the scaled ODEs have unique solutions for every initialization. For every $c \in (0,\infty]$, let $\phi_c(t,x)$ denote the unique solution of the ODE $\dot{x}(t) = h_c(x(t))$ with initial condition $x$.
The compact convergence of $h_c \to h_\infty$ now guarantees that for large scaling factors $c$ and initialization on the unit ball, the ODE solutions $\phi_c(t,x)$ will reach a neighborhood of the closed unit ball after $T>0$. This is stated as the following lemma; we refer to \cite[Chapter 3]{borkar2009stochastic} where something very similar has been shown.

\begin{lemma}
\label{lem: liapunov_arg}
There exist $c_0 > 0$ and $T>0$ such that for all initial conditions x on the closed unit ball, $\norm{\phi_c(x,t)} < \frac{1}{2}$ for $t\in [T,T+1]$ and $c>c_0$.
\end{lemma}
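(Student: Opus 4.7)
My plan is to derive the conclusion in three stages: first extract uniform attraction from the limit ODE, then upgrade $h_c \to h_\infty$ from pointwise to uniform on compact sets, and finally transfer this to the trajectories via Gronwall.

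\emph{Step 1 (Attraction of the limit ODE).} Since $h_\infty$ has the origin as its unique globally asymptotically stable equilibrium, standard Lyapunov theory implies uniform attraction from any compact set. In particular, there exists $T>0$ such that for every initial condition $x$ on the closed unit ball, the unique solution $\phi_\infty(x,t)$ satisfies $\norm{\phi_\infty(x,t)} < \tfrac{1}{4}$ for all $t \in [T, T+1]$. The uniformity over $x$ follows from continuous dependence on initial conditions combined with a compactness argument applied to the unit ball.

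\emph{Step 2 (Uniform convergence of vector fields).} The family $\{h_c\}_{c\ge 1}$ is equi-Lipschitz with constant $L$, and $h_c \to h_\infty$ pointwise by \Cref{asm: BMT}. Equi-Lipschitz pointwise convergence on a separable space upgrades to uniform convergence on compact sets: given any compact $K \subset \R^d$ and $\varepsilon>0$, cover $K$ by finitely many balls of radius $\varepsilon/(3L)$ centered at points where pointwise convergence can be controlled for $c$ past some threshold, and use the common Lipschitz bound to interpolate. Hence $h_c \to h_\infty$ uniformly on compacts.

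\emph{Step 3 (Trajectory comparison via Gronwall).} Fix $c > 1$ and $x$ on the closed unit ball. Writing the integral form, for $t \in [0,T+1]$,
\begin{equation}
    \phi_c(x,t) - \phi_\infty(x,t) = \int_0^t \bigl( h_c(\phi_c(x,s)) - h_\infty(\phi_\infty(x,s)) \bigr) ds.
\end{equation}
Add and subtract $h_c(\phi_\infty(x,s))$, and note that $\{\phi_\infty(x,s) : x \in \overline{B}(0,1),\ s\in[0,T+1]\}$ lies in some compact set $K$ by continuous dependence and finite horizon. Using the $L$-Lipschitz property of $h_c$ on the first difference and Step 2 on the second, we obtain
\begin{equation}
    \norm{\phi_c(x,t) - \phi_\infty(x,t)} \le L\int_0^t \norm{\phi_c(x,s) - \phi_\infty(x,s)} ds + (T+1)\,\varepsilon(c),
\end{equation}
where $\varepsilon(c) \coloneqq \sup_{y\in K}\norm{h_c(y) - h_\infty(y)} \to 0$ as $c\to\infty$. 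By the continuous Gronwall inequality, $\sup_{t\in[0,T+1]} \norm{\phi_c(x,t) - \phi_\infty(x,t)} \le (T+1)\varepsilon(c) e^{L(T+1)}$, which is uniform in $x$ on the unit ball.

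\emph{Conclusion.} Choose $c_0$ large enough that $(T+1)\varepsilon(c)e^{L(T+1)} < \tfrac{1}{4}$ for all $c > c_0$. Combining with Step 1, for every $x$ in the closed unit ball, every $t \in [T,T+1]$ and every $c>c_0$,
\begin{equation}
    \norm{\phi_c(x,t)} \le \norm{\phi_\infty(x,t)} + \norm{\phi_c(x,t) - \phi_\infty(x,t)} < \tfrac{1}{4} + \tfrac{1}{4} = \tfrac{1}{2}.
\end{equation}
I expect the main obstacle to be cleanly articulating the pointwise-to-uniform upgrade in Step 2; everything else is a textbook ODE comparison. The hypothesis that $h$ (and therefore each $h_c$) is globally Lipschitz with the \emph{same} constant $L$ is essential here, because without it one would lack the equicontinuity needed to promote pointwise to uniform convergence and would also lose the global Gronwall estimate in Step 3.
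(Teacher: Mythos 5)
Your proof is correct and follows the same route the paper defers to: the paper does not prove this lemma itself but refers to \cite[Chapter~3]{borkar2009stochastic}, where the argument is exactly your three-step scheme (uniform attraction under $h_\infty$ from the closed unit ball, equi-Lipschitz/Arzelà--Ascoli upgrade of pointwise to compact convergence of $h_c$---which the text just above the lemma already asserts as a consequence of \Cref{asm: BMT}---and a Gronwall trajectory comparison). The only place worth tightening is Step~1: for autonomous ODEs, global asymptotic stability of the origin yields uniform attraction from compact sets most cleanly via a converse Lyapunov function for $h_\infty$, which gives the uniform $T$ without the slightly hand-wavy appeal to ``continuous dependence plus compactness.''
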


We are now ready to prove our main stability theorem.
\begin{proof}[Proof of \Cref{thm: main_distributed}]
Fix $T>0$ from \Cref{lem: liapunov_arg} and for this $T$ a sample point where \Cref{lem: martingale_convergence} and \Cref{lem: SA_lemma} hold. We present a proof by
contradiction. Recall the scaling sequence $s(m) = \sup_{l \le m} \norm{\overline{x}(T_l)} \vee 1$ and suppose that $\sup\limits_{m\ge0} \norm{\overline{x}(T_m)} < \infty $ does not hold. Then, by construction, $s(m) \nearrow \infty$ monotonically. 

We will now consider those time intervals where the scaling sequence equals the norm of the trajectory a the beginning of the corresponding intervals, i.e. those $m$ where $s(m) = \norm{\overline{x}(T_m)}$. That is, we consider those time steps where $\norm{\hat{x}(T_{m})} = 1$. This yields a subsequence of interval indices $\{\tilde{m}(k)\}_{k\ge 0} \subset \{m\}_{m\ge0}$, such that $\norm{\hat{x}(T_{\tilde{m}(k)})} = 1$ and, for all $k\ge0$, and
\begin{equation}
    s(l) = \overline{x}(T_{\tilde{m}(k)})
\end{equation}
for all $l \in \{\tilde{m}(k), \ldots, \tilde{m}(k+1)-1\}$. In other words, $\tilde{m}(k)$ is the sequence of time steps that defines the rescaling sequence $s(m)$.


Recall now that $x^m(t)$, $t\in [T_m,T_{m+1}]$, are the unique solutions to $\dot{x}(t) = h_{s(m)} (x(t))$ with initial condition $x^m(T_m) = \hat{x}(T_m) = \frac{\overline{x}(T_m)}{s(m)}$. Since $s(m) \nearrow \infty$, there will be some $k'$, such that $s(m(k)) > c_0$ with $c_0$ from \Cref{lem: liapunov_arg} for all $k \ge k'$.

For these $k\ge k'$ we will now consider the last interval, $\tilde{m}(k+1)-1$, from the set of intervals $\{\tilde{m}(k), \ldots, \tilde{m}(k+1)-1 \} $ where $s(\tilde{m}(k))$ is used as the scaling sequence.
In particular, \Cref{lem: liapunov_arg} shows that
\begin{equation}
    \norm{x^{\tilde{m}(k+1)-1}(T_{\tilde{m}(k+1)})} < \frac{1}{2}.
\end{equation}
for $k\ge k'$. 
Furthermore, \Cref{lem: SA_lemma} shows that there is some $k''$, such that
\begin{equation}
    \sup\limits_{t\in [T_{\tilde{m}(k)},T_{\tilde{m}(k+1)}]} \norm{\hat{x}(t) - x^{m(k)}(t)} < \frac{1}{2}, \quad  k\ge k''.
\end{equation}
We can finally conclude that
\begin{align} \frac{s(\tilde{m}(k+1))}{s(\tilde{m}(k))} = \frac{\overline{x}(T_{\tilde{m}(k+1)})}{\overline{x}(T_{\tilde{m}(k)})} &= \lim\limits_{t \nearrow T_{\tilde{m}(k+1)}} \norm{\hat{x}(t)} \\
&\le \lim\limits_{t \nearrow T_{\tilde{m}(k+1)}} \norm{x^{\tilde{m}(k+1)-1}(t)} + \lim\limits_{t \nearrow T_{\tilde{m}(k+1)}} \norm{\hat{x}(t) -x^{\tilde{m}(k+1)-1}(t)} \\
&< \frac{1}{2} + \frac{1}{2} = 1
\end{align}
for all $k>\max(k',k'')$. This is the required contradiction since $s(m)$ was constructed as monotonically increasing.
Hence, $\sup_{m\ge0} s(m) < \infty $ almost surely and the theorem follows form \Cref{lem: main_L1_bound}.
\end{proof}
\begin{proof}[Proof of \Cref{cor: convergence}]
    Under (A1)-(A5) it follows from \Cref{thm: main_distributed} that $x_n$ is almost surely stable. Using \Cref{lem: SA_lemma} it then follows that $x_{n+1} = x_n + a(n) [h(x_n) + e_n + M_{n+1}]$ with $e_n \in o(1)$. As mentioned in the introduction, the convergence now follows from the SA literature, see e.g. \cite[Sec. 2.2]{borkar2009stochastic}.
\end{proof}

\subsection{A generalization of the Borkar-Meyn Theorem}
\label{sec: genBMT}

Inspired by our analysis of the distributed BMT using a monotonically increasing scaling sequence, we can now propose a weaker version of \Cref{asm: BMT}:
\begin{assumption}
    \label{asm: BMT_new}
    The functions $h_c(x) \coloneqq \frac{h(cx)}{c}, c\ge 1$, $x\in \R^d$, satisfy that there exists a sequence $c_n \nearrow \infty$, such that 
         $h_{c_{n}}(x) \to h_\infty(x)$ as $n\to \infty$ for some $h_\infty \in C(\R^d)$, where the ODE 
    \begin{equation}
        \dot{x}(t) = h_\infty(x(t)) 
    \end{equation}
    has the origin as its unique globally asymptotically stable equilibrium.
\end{assumption}
We believe that this version is practically attractive as it is often simpler to give a specific sequence $c_n$ such that $h_{c_n}(x)$ converges pointwise, whereas \Cref{asm: BMT} requires that any scaling sequence approaches the same limit. The reason for this is that the original proof of the BMT requires the scaling sequence $\overline{x}(T_m)$ as described in \Cref{sec: outline}. The essence of our new line of argument is that the scaling sequence is monotonically increasing and that it has to diverge to infinity whenever $x_n$ is unstable. This observation is the core idea behind the following generalization. The idea can also be applied to a generalization of the BMT for set-valued recursive inclusions \cite{ramaswamy2017generalization}.

\begin{theorem}
    \label{thm: genBMT}
    Under (A1), (A3)-(A6), $\sup\limits_{n\ge0} \norm{x_n} < \infty \as$.
\end{theorem}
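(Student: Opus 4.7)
The plan is to mirror the stability proof of \Cref{thm: main_distributed}, observing that all of its preparatory results --- the recursive $L^2$ bound \Cref{lem: main_l2_bound}, the accumulated noise convergence \Cref{lem: martingale_convergence}, the pathwise boundedness \Cref{lem: main_L1_bound}, the drift error decay \Cref{cor: L1_error_to_zero}, and the ODE tracking \Cref{lem: SA_lemma} --- depend only on \Cref{asm: lipschitz}, \Cref{asm: noise}, \Cref{asm: AoI}, \Cref{asm: stepsize} together with two structural facts about the scaling sequence: it is monotonically non-decreasing and it satisfies $s(m) \ge \norm{\overline{x}(T_m)}$. None of them invoke \Cref{asm: BMT} itself. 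Consequently, they remain valid verbatim under the hypotheses of \Cref{thm: genBMT} provided we retain a monotonically non-decreasing scaling sequence that dominates $\norm{\overline{x}(T_m)}$.

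The first step is therefore to replace $s(m)$ by a scaling sequence whose values lie in the countable set $\{c_n\}_{n\ge 1}$ supplied by \Cref{asm: BMT_new}. Assuming without loss of generality that $c_n \ge 1$ (otherwise discard the leading terms), define
\begin{equation*}
    \tilde{s}(m) \coloneqq \min\bigl\{ c_n : c_n \ge \sup_{l \le m} \norm{\overline{x}(T_l)} \bigr\} \vee c_1.
\end{equation*}
Then $\tilde{s}(m)$ is monotonically non-decreasing, dominates $\norm{\overline{x}(T_m)}$, and lies in $\{c_n\}$. Substituting $\tilde{s}(m)$ for $s(m)$ throughout \Cref{sec: main_results} leaves every estimate in place, so the rescaled trajectory still satisfies $\norm{\hat{x}(T_m)} \le 1$, admits a convergent accumulated rescaled noise process, remains pathwise bounded, and asymptotically tracks solutions of $\dot{x}(t) = h_{\tilde{s}(m)}(x(t))$ in the sense of \Cref{lem: SA_lemma}.

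The second step is to adapt \Cref{lem: liapunov_arg} to the weaker \Cref{asm: BMT_new}. Each $h_c$ inherits the Lipschitz constant $L$ of $h$, so the family $\{h_{c_n}\}$ is equi-Lipschitz and the pointwise limit $h_\infty$ is itself Lipschitz with constant $L$. Pointwise convergence therefore promotes to uniform convergence on compact sets by a standard Arzelà-Ascoli argument. Combining this with continuous dependence of ODE flows on the right-hand side and the global asymptotic stability of the origin for $\dot{x}(t) = h_\infty(x(t))$ produces $n_0 \in \N$ and $T > 0$ such that $\norm{\phi_{c_n}(x, t)} < 1/2$ for every $t \in [T, T+1]$, every $n \ge n_0$, and every $x$ on the closed unit ball.

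The third step is the contradiction. Assume $\sup_m \norm{\overline{x}(T_m)} = \infty$, so $\tilde{s}(m) \nearrow \infty$ and hence $\tilde{s}(m) = c_n$ for some $n \ge n_0$ once $m$ is large enough. Since $\norm{\hat{x}(T_m)} \le 1$, the upgraded Liapunov statement gives $\norm{x^m(T_{m+1})} < 1/2$, and \Cref{lem: SA_lemma} then yields $\lim_{t \nearrow T_{m+1}} \norm{\hat{x}(t)} < 1$, equivalently $\norm{\overline{x}(T_{m+1})} < \tilde{s}(m)$. This forces $\tilde{s}(m+1) = \tilde{s}(m)$ for every sufficiently large $m$, contradicting $\tilde{s}(m) \nearrow \infty$. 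Therefore $\sup_m \norm{\overline{x}(T_m)} < \infty$ almost surely, and \Cref{lem: main_L1_bound} delivers $\sup_n \norm{x_n} < \infty$ almost surely. The only genuine obstacle is the Arzelà-Ascoli upgrade of \Cref{lem: liapunov_arg}; once that is in hand, the proof is a direct restaging of the distributed BMT with the constrained scaling $\tilde{s}(m)$.
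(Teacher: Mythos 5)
Your proposal is correct and matches the paper's own proof almost step for step: replace the scaling sequence by the smallest $c_n$ dominating the running supremum so that it takes values in the prescribed subsequence, upgrade pointwise convergence of $h_{c_n}$ to compact-uniform convergence via Arzelà-Ascoli, transplant the preparatory lemmas (which indeed use only monotonicity and domination of the scaling sequence, never \Cref{asm: BMT} itself), and close with the Liapunov contradiction. The only cosmetic difference is in how the contradiction is phrased: the paper passes to the subsequence of "switching indices" $m(k)$ where $s$ strictly increases and shows the supposed increase at $m(k+1)$ is impossible, whereas you show directly that once $\tilde{s}(m)$ exceeds the Liapunov threshold, every step satisfies $\tilde{s}(m+1)=\tilde{s}(m)$, so the scaling sequence stabilizes. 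Your version is a shade more streamlined — it sidesteps the need to argue that an increase at a switching time is caused by the last term — but it is the same contradiction dressed slightly differently, and it relies on exactly the same upgraded \Cref{lem: liapunov_arg} and \Cref{lem: SA_lemma}.
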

\begin{proof}
The new \Cref{asm: BMT_new} implies that there is a sequence $c_n \nearrow \infty $ such that $h_{c_n}(x) \to h_\infty(x)$ pointwise.  The Arzelà–Ascoli theorem implies that convergence is uniform on compact subsets of $\R^d$. \Cref{lem: liapunov_arg} now also holds for the sequence $c_n$ with the limit $h_\infty$, i.e. there exist $c_0 > 0$ and $T>0$ such that for all initial conditions $x$ on the closed unit ball, $\norm{\phi_{c_n}(x,t)} < \frac{1}{2}$ for $t\in [T,T+1]$ and $c_n>c_0$. Now fix this $T>0$ and follow the construction at the beginning of \Cref{sec: main_results}. 
We will now define a new scaling sequence $s(m)$ to replace the scaling sequence in \eqref{eq: scaling_seq}. Specifically, we define
\begin{equation}
    s(m) \coloneqq \inf_{n\ge 0} \{ c_n \mid \sup_{l\le m} \norm{\overline{x}(T_l)} \le c_n \} \vee 1
\end{equation}
Observe, that $s(m)$ is again by construction monotonically increasing
with the property that $s(m) \ge \norm{\overline{x}(T_m)}.$ 
It follows that the analysis presented in this section holds for this new scaling sequence. It is left to verify the analog to the proof by contradiction for the new scaling sequence.

Fix a sample point where the analogs of \Cref{lem: martingale_convergence} and \Cref{lem: SA_lemma} hold for the new $T>0$ and the new scaling sequence.
Now suppose that $\sup\limits_{m\ge0} \norm{\overline{x}(T_m)} < \infty $ does not hold, then  $ \sup_{l\le m} \norm{\overline{x}(T_l)} \to \infty$.
Observe that $s(m)$ is in essence a subsequence of $c_n$, where some elements get repeated. By construction, $s(m)$ thus inherits the convergence properties from $\{c_n\}$ and
\begin{equation}
    h_{s(m)}(x) \to h_\infty(x)
\end{equation}
uniformly on compact sets. As before, let $x^m(t)$, $t\in [T_m,T_{m+1}]$, be the unique solution to the ODE
\begin{equation}
    \dot{x}(t) = h_{s(m)} (x(t))
\end{equation}
with initial condition $x^m(T_m) = \hat{x}(T_m)$. It follows from the new \Cref{lem: liapunov_arg} that there exists some $M>0$ such that for all initial conditions on the closed unit ball, $\norm{x^m(t)} < \frac{1}{2}$ for $t\in [T,T+1]$ and $m>M$.

Next, we consider those timesteps where $s(m)$ increases, i.e.
let $\{m(k)\}_{k\ge1} \subset \{m \}_{m\ge 1}$, be those timesteps where $s(m(k)) > s(m(k)-1)$. In other words,
\begin{equation}
    s(l) = s(m(k))
\end{equation}
for all $l \in \{m(k), \ldots, m(k+1)-1 \} $. Consider now the first $m(k)$, such that $ m(k)> M$ and $ \sup\limits_{t\in [T_m,T_{m+1}]} \norm{\hat{x}(t) - x^m(t)} < \frac{1}{2}$ for $m > m(k)$ by \Cref{lem: SA_lemma}. As before, the contradiction now arises when we consider the last interval where $s(m(k))$ is used as a scaling factor. We have that
\begin{align} 
 1 <  \frac{\overline{x}(T_{m(k+1)})}{s(m(k))} 
&= \lim\limits_{t \nearrow T_{m(k+1)}}  \norm{\hat{x}(t)} \\ &\le \lim\limits_{t \nearrow T_{m(k+1)}} \norm{x^{m(k+1)-1}(t)} + \lim\limits_{t \nearrow T_{m(k+1)}} \norm{\hat{x}(t) -x^{m(k+1)-1}(t)} \\
&< \frac{1}{2} + \frac{1}{2} = 1.
\end{align}
The contradiction follows since $T_{m(k+1)}$ is by construction a timestep where $s(m)$ strictly increases, hence $\overline{x}(T_{m(k+1)})$ must be greater than the previous scaling. We conclude that $\sup_{m\ge0} s(m) < \infty $ almost surely and the theorem follows as before.
\end{proof}
We will now move on to applications. 

\section{Application I: Decentralized Optimization}
\label{sec: app1}

Consider the unconstrained stochastic optimization problem.
\begin{equation}
    \min\limits_{x \in \R^{d}} F(x)
\end{equation}
with objective
\begin{equation}
    F(x) \coloneqq \int\limits_\Xi f(x;\xi) d\P(\xi)
\end{equation}
for some random function $f:\R^d \times \Xi \rightarrow \R$. Suppose the global optimization variable $x = (x_1, \ldots, x_D) \in \R^d$, with $d \coloneqq \sum_{i=1}^{D} d_i$, is the concatenation of local variables $x_i$, where each local variable is controlled by an associated agent~$i$. To solve the problem, each agent calculates sample partial derivatives $\nabla_{x_i} f(\cdot, \xi)$ to adapt their local variable, i.e. the agents generate sequences $x_n^i$. We assume that the agents do not know the distribution of $\xi$, but at every time slot $n$ they observe \emph{local i.i.d.\ realizations} $\xi_n^i$ of $\xi$. Further, since the agents are physically distributed they have only access to old versions $x^j_{n-\tau_{ij}(n)}$ with AoIPs $\tau_{ij}(n)$. Each agent~$i$ then runs the following SGD iteration starting from some initial $x^i_1 \in \R^{d_i}$:
\begin{equation}
    \label{eq: SGD_iteration}
    x_{n+1}^i = x_n^i - a(n) \nabla_{x_i}f(x^1_{n-\tau_{1i}(n)}, \ldots, x^D_{n-\tau_{Di}(n)}, \xi^i_n) ,
\end{equation}
We will now formulate conditions to apply the theory developed herein. We denote the Hessian matrix of $f$ with respect to $x$ by $\mathbf{H}_x f(x;\xi)$; further, $\norm{\mathbf{H}_x f(x;\xi)}$ denotes matrix norm induced by the considered norm on $\R^d$. 

\begin{assumption}
\label{asm: SGD}
     \begin{enumerate}
        \item $f(x;\xi)$ is twice differentiable in $x$ for $\P$-almost all $y$,
        \item $\sup_{x} \norm{ \Ew{ \mathbf{H}_x f(x;\xi)}} \coloneqq L < \infty$,
        \item $\Ew{\norm{\nabla_x f(x; \xi)}^2} \le K(1+\norm{x}^2$)  for all $x\in \R^d$ for some $K>0$.
     \end{enumerate}
\end{assumption}
\begin{assumption}

\label{asm: SGD2}
    $$ \liminf\limits_{\norm{x} \to \infty} \lambda_{\min} \left(\Ew{ \mathbf{H}_x f(x;\xi)} \right)  > 0, $$
\end{assumption}
where $\lambda_{\min}(\cdot)$ denotes the smallest eigenvalue.
\begin{theorem}
\label{thm: SGD}
Under \Cref{asm: AoI}, \Cref{asm: stepsize}, \Cref{asm: SGD} and \Cref{asm: SGD2} the SGD iterations \eqref{eq: SGD_iteration} converge almost surely to a limit $x_\infty$, such that $\nabla_x F(x_\infty) = 0$.
\end{theorem}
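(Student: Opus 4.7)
The plan is to recognize \eqref{eq: SGD_iteration} as an instance of the distributed stochastic approximation \eqref{eq: main_iteration} with drift $h(x) \coloneqq -\nabla_x F(x)$ and, for each $i$, martingale noise
\begin{equation*}
    M^i_{n+1} \coloneqq \E_\xi\!\left[\nabla_{x_i} f(y^i_n;\xi)\right] - \nabla_{x_i} f(y^i_n;\xi^i_n), \quad y^i_n \coloneqq (x^1_{n-\tau_{1i}(n)}, \ldots, x^D_{n-\tau_{Di}(n)}),
\end{equation*}
and to then invoke \Cref{thm: genBMT} together with \Cref{cor: convergence}. I would verify each hypothesis separately. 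Differentiating under the integral (justified by \Cref{asm: SGD}.1--2) shows that the Jacobian of $h$ equals $-\E_\xi[\mathbf{H}_x f(x;\xi)]$, which by \Cref{asm: SGD}.2 is bounded in operator norm by $L$; hence \Cref{asm: lipschitz} holds with constant $L$. The martingale hypothesis \Cref{asm: noise} is then immediate: $\{\xi^i_n\}$ is i.i.d.\ and independent of $\cF_n$, so $M^i_{n+1}$ is a martingale difference, and $\E[\norm{M^i_{n+1}}^2 \mid \cF_n] \leq \E_\xi[\norm{\nabla_{x_i} f(y^i_n;\xi)}^2] \leq K(1+\norm{y^i_n}^2)$ by \Cref{asm: SGD}.3.

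The heart of the proof is the verification of the weak BMT hypothesis \Cref{asm: BMT_new}. The family $h_c(x) = -\nabla_x F(cx)/c$ is uniformly Lipschitz (with constant $L$), hence equicontinuous on compacts, so Arzel\`a--Ascoli combined with a diagonal extraction furnishes a sequence $c_n \nearrow \infty$ and a Lipschitz limit $h_\infty$ with $h_{c_n} \to h_\infty$ locally uniformly. It remains to show that $\dot x = h_\infty(x)$ has the origin as its unique globally asymptotically stable equilibrium; I propose $V(x) \coloneqq \tfrac{1}{2}\norm{x}^2$ as a strict Lyapunov function. By \Cref{asm: SGD2}, pick $\alpha > 0$ and $R > 0$ with $\lambda_{\min}\!\left(\E_\xi[\mathbf{H}_x f(y;\xi)]\right) \geq \alpha$ whenever $\norm{y} > R$. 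Writing
\begin{equation*}
    \nabla_x F(cx) \cdot x = \nabla_x F(0) \cdot x + \int_0^c x^\top \E_\xi[\mathbf{H}_x f(sx;\xi)] x \, ds,
\end{equation*}
splitting the integral at $s = R/\norm{x}$, and using the global operator-norm bound $L$ on the Hessian over the piece near zero, yields $\frac{\nabla_x F(cx) \cdot x}{c} \geq \alpha\norm{x}^2 - C(1+\norm{x})/c$ for some $C>0$. Passing to the limit along $c_n$ gives $h_\infty(x) \cdot x \leq -\alpha \norm{x}^2$ for every $x \in \R^d$, so $\dot V \leq -2\alpha V$ along any solution of $\dot x = h_\infty(x)$, which establishes global exponential stability of the origin.

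With every hypothesis verified, \Cref{thm: genBMT} yields $\sup_n \norm{x_n} < \infty$ almost surely, and \Cref{cor: convergence} then shows that $x_n$ converges almost surely to a sample-path-dependent compact connected internally chain transitive invariant set $\Lambda$ of $\dot x = -\nabla_x F(x)$. Since $F$ is a strict Lyapunov function for this ODE (because $\tfrac{d}{dt} F(x(t)) = -\norm{\nabla_x F(x(t))}^2$), invariance forces $F$ to be constant on $\Lambda$ and therefore $\Lambda \subset \{\nabla_x F = 0\}$; as the ODE reduces to $\dot x = 0$ on $\Lambda$, every point of $\Lambda$ is an equilibrium. Connectedness of $\Lambda$, combined with the fact that the critical set is bounded by \Cref{asm: SGD2} and generically discrete, then reduces $\Lambda$ to a single point $x_\infty$ with $\nabla_x F(x_\infty) = 0$. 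The principal technical obstacle is the verification of \Cref{asm: BMT_new}: extracting a pointwise limit $h_\infty$ from the uniformly Lipschitz family $\{h_c\}$ and converting the spectral lower bound of \Cref{asm: SGD2} at infinity into the Lyapunov inequality $h_\infty(x)\cdot x \leq -\alpha \norm{x}^2$ that holds for \emph{all} $x$; the integral representation above together with the uniform spectral bound is the key device that makes this uniform estimate possible.
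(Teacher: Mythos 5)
Your proposal is correct and follows the paper's overall architecture (rewrite \eqref{eq: SGD_iteration} as an instance of \eqref{eq: main_iteration}, verify \Cref{asm: lipschitz}, \Cref{asm: noise}, \Cref{asm: BMT_new}, then invoke \Cref{thm: genBMT} and \Cref{cor: convergence}), but the key step --- extracting the scaling limit and proving that $\dot x = h_\infty(x)$ is globally asymptotically stable to the origin --- is done by a genuinely different device.

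The paper's \Cref{lem: scaling_limit} applies the mean value theorem to $\nabla_x F(c_n x) - \nabla_x F(z)$, obtaining a matrix $\mathbf{A}(c_n x, z)$ in the convex hull of $\E_\xi[\mathbf{H}_x f(\cdot;\xi)]$ along the segment $\overline{c_n x\,z}$, and then argues that $\mathbf{A}$ has smallest eigenvalue $>\varepsilon$ when $\norm{z}>r$ and $\norm{c_n x}>r$, finishing via LaSalle. This convex-hull step is delicate: the segment $\overline{c_n x\,z}$ may enter the ball of radius $r$ even when both endpoints lie outside it, so one has to choose $z$ on the ray through $x$ to make the argument airtight, which the paper leaves implicit. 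Your approach bypasses this entirely. You write $\nabla_x F(cx)\cdot x$ via the fundamental theorem of calculus along the ray $\{sx : 0\le s\le c\}$, split the integral at $s = R/\norm{x}$, bound the inner piece by $LR\norm{x}$ via \Cref{asm: SGD}.2, and bound the outer piece below by $\alpha\norm{x}^2(c - R/\norm{x})$ via \Cref{asm: SGD2}. The ray never has to stay outside the ball --- the contribution near the origin is an $O(1)$ term that dies after dividing by $c$. Passing to the limit along $c_n$ gives the uniform dissipation inequality $h_\infty(x)\cdot x \le -\alpha\norm{x}^2$, and $V(x)=\tfrac12\norm{x}^2$ gives global \emph{exponential} stability at the origin, which is a bit more than the LaSalle argument delivers and renders \Cref{lem: liapunov_arg} transparent. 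The geometry of working along a fixed ray is exactly what makes this uniform estimate possible; your verification is cleaner than the paper's.

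One small over-claim to flag: the final reduction from ``$x_n$ converges to a compact connected invariant subset $\Lambda$ of the critical set of $F$'' to ``$x_n$ converges to a single critical point $x_\infty$'' is not rigorous as stated --- your appeal to the critical set being ``generically discrete'' is a hand-wave. However, the paper's own proof commits the identical shortcut (it only establishes that the invariant set lies inside the critical set, while the theorem statement promises convergence to a point), so this is a shared gap in the theorem statement rather than a defect of your argument relative to the paper's. To get a genuine limit point one would need an additional hypothesis, e.g., that the critical points of $F$ are isolated, which does hold in the quadratic example \eqref{eq:quadratic_obj}.
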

Next, we discuss \Cref{thm: SGD} and its assumptions. 

\subsection{Discussions}

With \Cref{asm: AoI} we assume as before that a dominating variable exists that has some bounded moment based on which \Cref{asm: stepsize} selects the required decaying stepsize. For \Cref{asm: SGD}, we have that each of the following two conditions in addition to \Cref{asm: SGD}.1 and \ref{asm: SGD}.2 imply \Cref{asm: SGD}.3:
\begin{enumerate}
    \item $\xi$ has finite support, i.e. $\abs{\Xi} < \infty$.
    \item $\norm{\nabla_x f(x;\xi)} \le \abs{g(\xi)}(1+\norm{x})$ for some measurable function $g: \Xi \to \R$, such that  $\Ew{g(\xi)^2} < \infty$.
\end{enumerate}
Observe that \Cref{asm: SGD}.3 even allows that the gradient $\nabla_x F(x)$ is unbounded. Such objectives were not covered by the unconstrained optimization setting in \cite{zhou2022distributed}. \Cref{asm: SGD2} is the stability condition that we use to apply the theory developed herein. Some useful conditions that imply \Cref{asm: SGD2} are:
\begin{enumerate}
    \item $F(x) = G(x) + \kappa \norm{x}^2$ for some $L_2$ regularization coefficient $\kappa>0$, where $G(x)$ satisfies \Cref{asm: SGD} and that $G(x) \in o(\norm{x}^2) $.
    \item $F(x) = G(x) + L \norm{x}^2$, where $G(x)$ satisfies \Cref{asm: SGD} with $L>0$ and $G(x)$ is coercive, i.e. $G(x) \to \infty$ as $\norm{x} \to \infty$.
\end{enumerate}
The intuition is that for arbitrary Lipschitz-continuous, coercive functions one needs sufficient regularization to stabilize the SGD iteration in the presence of large delays.

The important class of problems that becomes available for distributed SGD under large delays are quadratic objectives:
 \begin{example}
 Consider
 \begin{equation}
 \label{eq:quadratic_obj}
    f(x;\xi) = x^\top A(\xi) x + b(\xi)^\top x,
 \end{equation}
 where $\Ew{A(\xi)} \in \R^{d\times d}$ is a positive definite matrix with $\Ew{\norm{A(\xi)}^2} < \infty$ 
 and further $\Ew{\norm{b(\xi)}^2} < \infty$. \Cref{asm: SGD} and \Cref{asm: SGD2} hold for \eqref{eq:quadratic_obj} and \Cref{thm: SGD} thus shows \eqref{eq: SGD_iteration} would converge to the global minimum of $F(x) = x^\top \Ew{A(\xi)} x + \Ew{b(\xi)}^\top x$.
 \end{example}

\subsection{Proof of Theorem 3}

We apply the theory developed herein. First, observe that \Cref{asm: SGD}.3 implies that $\nabla_xf(x,y)$ is uniformly integrable for every open neighborhood of some $x\in \R^d$. It thus follows from Lebesgue’s dominated convergence theorem that $\nabla_x F(x) = \Ew{\nabla_x f(x;\xi)}$. Define $h(x) \coloneqq  -\nabla_xF(x)$ and $h_c(x) \coloneqq -\frac{\nabla_x F(cx)}{c}$.

We need two simple auxiliary Lemmas, whose proofs are given in Appendix \ref{sec: appendix}. First, using the boundedness of the expected Hessian it follows that $\nabla_x F(x)$ is Lipschitz-continuous.
\begin{lemma}
\label{lem: lipschitz_in_mean}
    Under \Cref{asm: SGD}.1 and \ref{asm: SGD}.2 it follows that $\nabla_x F(x)$ (hence $h(x)$) is Lipschitz-continuous, with Lipschitz constant $L>0$ from \Cref{asm: SGD}.2.
\end{lemma}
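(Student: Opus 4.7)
The plan is to exploit the standard fundamental-theorem-of-calculus identity
\begin{equation}
\nabla_x F(x) - \nabla_x F(y) = \int_0^1 \mathbf{H}_x F\bigl(y + t(x-y)\bigr)(x-y)\, dt,
\end{equation}
once it is justified that the Hessian of $F$ equals $\E[\mathbf{H}_x f(\,\cdot\,;\xi)]$, and then to bound the resulting integrand using \Cref{asm: SGD}.2.

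First I would build on the identity $\nabla_x F(x) = \E[\nabla_x f(x;\xi)]$ that has already been established just before the lemma statement via Lebesgue's dominated convergence theorem (using \Cref{asm: SGD}.3). Starting from this identity, I would repeat the interchange-of-differentiation-and-integration argument one more level deeper: for any coordinate direction, the difference quotient of $\nabla_x f(\,\cdot\,;\xi)$ can be written, by \Cref{asm: SGD}.1, as a mean-value expression involving entries of $\mathbf{H}_x f$. \Cref{asm: SGD}.2 provides a uniform bound on the norm of $\E[\mathbf{H}_x f(\,\cdot\,;\xi)]$; combined with the pointwise twice-differentiability from \Cref{asm: SGD}.1, this lets me invoke dominated convergence on the difference quotient and conclude $\mathbf{H}_x F(x) = \E[\mathbf{H}_x f(x;\xi)]$, with $\sup_x \norm{\mathbf{H}_x F(x)} \le L$.

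Given this bound on the Hessian of $F$, the Lipschitz continuity of $\nabla_x F$ follows from the displayed line-integral identity above: taking norms, pulling the norm inside the integral, and using $\norm{\mathbf{H}_x F(z)} \le L$ for every $z$ yields
\begin{equation}
\norm{\nabla_x F(x) - \nabla_x F(y)} \le \int_0^1 \norm{\mathbf{H}_x F(y + t(x-y))}\,\norm{x-y}\, dt \le L \norm{x-y}.
\end{equation}
The claim for $h(x) = -\nabla_x F(x)$ is then immediate since negation preserves Lipschitz constants.

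The main obstacle, and the only non-cosmetic step, is the interchange that promotes the bound on $\E[\mathbf{H}_x f]$ to a bound on the Hessian of $F$ itself; once that is in hand the rest is a one-line application of the mean value theorem. In a self-contained write-up I would either (i) invoke a standard ``differentiation under the integral sign'' theorem whose hypotheses are met by \Cref{asm: SGD}.1--\ref{asm: SGD}.2, or (ii) argue directly by bounding the relevant difference quotients of $\nabla_x f$ componentwise in terms of the operator norm of $\mathbf{H}_x f$ along a segment and invoking dominated convergence using \Cref{asm: SGD}.2 as the dominating envelope (after noting that the segment of integration is compact, so the supremum of $\E[\norm{\mathbf{H}_x f}]$ along it is finite and uniformly bounded by $L$).
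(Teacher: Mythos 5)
Your proposal takes a genuinely different route from the paper's, and it contains a gap at precisely the point you flag as the only non-cosmetic step.

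The paper applies the mean value theorem for vector-valued functions \emph{pointwise in $\xi$} to $\nabla_x f(\,\cdot\,;\xi)$ (which is legitimate under \Cref{asm: SGD}.1), producing $\nabla_x f(x';\xi) - \nabla_x f(x'';\xi) = \mathbf{A}(x',x'',\xi)(x'-x'')$ with $\mathbf{A}(x',x'',\xi) \in \convh\{\mathbf{H}_x f(z;\xi) : z\in\overline{x'x''}\}$, and only afterwards takes the expectation. This order of operations is deliberate: it lets the paper go straight from the linearity of $\E$ and \Cref{asm: SGD}.2 to the Lipschitz bound, without ever having to prove that $F$ is twice differentiable or that $\mathbf{H}_x F = \E[\mathbf{H}_x f]$. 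Your proposal inverts this order: it first promotes the pointwise Hessian bound to a statement about $\mathbf{H}_x F$ via differentiation under the integral, and then applies the FTC identity to $\nabla_x F$. That is a different decomposition of the argument, not a cosmetic rewording.

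The gap is in the interchange step, and your strategy (ii) as written does not close it. \Cref{asm: SGD}.2 bounds $\sup_x\norm{\E[\mathbf{H}_x f(x;\xi)]}$, i.e. the \emph{norm of the expectation}. To invoke dominated convergence on the difference quotients of $\nabla_x f(\,\cdot\,;\xi)$ you need a $\xi$-integrable dominating envelope for the $\xi$-\emph{pointwise} quantity $\sup_{z\in\overline{x'x''}}\norm{\mathbf{H}_x f(z;\xi)}$, which is controlled (if at all) by $\E[\norm{\mathbf{H}_x f}]$ or a pointwise bound on $\norm{\mathbf{H}_x f(\,\cdot\,;\xi)}$, not by $\norm{\E[\mathbf{H}_x f]}$. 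Your parenthetical "the supremum of $\E[\norm{\mathbf{H}_x f}]$ along it is finite and uniformly bounded by $L$" silently replaces $\norm{\E[\cdot]}$ by $\E[\norm{\cdot}]$, which \Cref{asm: SGD}.2 does not license; the two can differ badly (the Hessian could be huge with $\xi$-dependent sign while its mean is small). So under the stated hypotheses the conclusion $\mathbf{H}_x F = \E[\mathbf{H}_x f]$ is not justified, and with it the FTC representation of $\nabla_x F$ is unavailable. This is exactly what the paper's order of operations is designed to avoid: by applying the MVT before taking expectations, the only object whose norm must be controlled is $\E[\mathbf{A}(x',x'',\xi)]$, which is handled directly by \Cref{asm: SGD}.2. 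If you want to salvage your route, replace the appeal to \Cref{asm: SGD}.2 as a dominating envelope by an additional integrability hypothesis of the form $\E[\sup_{z}\norm{\mathbf{H}_x f(z;\xi)}]<\infty$ locally, or better, apply the fundamental-theorem-of-calculus identity to $\nabla_x f(\,\cdot\,;\xi)$ itself (rather than to $\nabla_x F$) and then take expectations, which collapses your argument into the paper's.
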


Next, \Cref{asm: SGD2} requires that $F$ acts like a convex function at large scale. Then, the Arzelà-Ascoli theorem guarantees that we can extract a convergent subsequence to satisfy \Cref{asm: BMT_new}. This leads to the second lemma.
\begin{lemma}
\label{lem: scaling_limit} Under \Cref{asm: SGD}.2, \ref{asm: SGD} and \Cref{asm: SGD2}
there is a scaling sequence $c_n \ge 1$ with $c_n \nearrow \infty$, such that
\begin{equation}
    h_{c_n}(x) \to h_\infty(x)
\end{equation}
uniformly on compact sets and $\dot{x}(t) = h_\infty(x(t))$ has the origin as its unique globally asymptotically stable equilibrium.
\end{lemma}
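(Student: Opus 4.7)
The plan is to combine the Arzelà–Ascoli theorem with a fundamental-theorem-of-calculus argument along rays, using \Cref{asm: SGD2} to control the sign of $\langle x, h_\infty(x)\rangle$ at infinity and thereby construct a quadratic Lyapunov function for the limit ODE.

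First, I would invoke \Cref{lem: lipschitz_in_mean} to see that each $h_c(x)=-\nabla_x F(cx)/c$ is itself $L$-Lipschitz, since $\|h_c(x)-h_c(y)\|\le (L/c)\|cx-cy\|=L\|x-y\|$. Hence $\{h_c\}_{c\ge1}$ is equi-Lipschitz (so equicontinuous) and pointwise bounded via $\|h_c(x)\|\le \|\nabla_x F(0)\|/c+L\|x\|$. Picking any sequence $c_n'\nearrow \infty$ and applying Arzelà–Ascoli together with a diagonal argument over a compact exhaustion of $\R^d$, I extract a subsequence $c_n$ such that $h_{c_n}\to h_\infty$ uniformly on compacts for some $L$-Lipschitz $h_\infty\in C(\R^d)$ (Lipschitzness passes to the limit, which will guarantee well-posedness of the limit ODE).

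To identify the sign of $\langle x,h_\infty(x)\rangle$, I use the representation
\begin{equation}
\frac{\nabla_x F(cx)}{c}=\frac{\nabla_x F(0)}{c}+\int_0^1 \mathbf{H}_x F(tcx)\,x\,dt,
\end{equation}
with $\mathbf{H}_x F=\E[\mathbf{H}_x f(\,\cdot\,;\xi)]$ (interchange justified under \Cref{asm: SGD}.1–\ref{asm: SGD}.2). Let $\lambda>0$ be the liminf from \Cref{asm: SGD2} and fix $R>0$ with $\mathbf{H}_x F(y)\succeq (\lambda/2)I$ for all $\|y\|\ge R$. For $x\ne0$, split the $t$-integral at $t_c:=R/(c\|x\|)$: on $[0,t_c]$ use the uniform bound $\|\mathbf{H}_x F\|\le L$, contributing at most $LR\|x\|/c\to0$; on $[t_c,1]$ use the eigenvalue bound to get $\langle x,\mathbf{H}_x F(tcx)x\rangle\ge(\lambda/2)\|x\|^2$. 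Pairing with $x$, sending $c=c_n\to\infty$, and using the compact convergence, I conclude
\begin{equation}
\langle x,h_\infty(x)\rangle\le -\tfrac{\lambda}{2}\|x\|^2 \qquad \text{for all } x\in\R^d,
\end{equation}
the case $x=0$ being immediate from $h_c(0)=-\nabla_x F(0)/c\to 0$.

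Finally, the Lyapunov candidate $V(x)=\tfrac12\|x\|^2$ satisfies $\dot V=\langle x,h_\infty(x)\rangle\le-\lambda V$ along any solution of $\dot x=h_\infty(x)$, so $\|x(t)\|\le e^{-\lambda t/2}\|x(0)\|$, giving global exponential (hence asymptotic) stability of the origin; uniqueness of the equilibrium follows because $h_\infty(x)=0$ forces $\langle x,h_\infty(x)\rangle=0$, whence $x=0$. The main technical point is the integral splitting: the interval $[0,t_c]$ on which the Hessian lower bound from \Cref{asm: SGD2} is unavailable has length $t_c\to 0$, and this is precisely what kills the bad contribution while preserving the good one in the limit $c_n\to\infty$; the rest is bookkeeping with the Lipschitz and pointwise-boundedness estimates already at hand.
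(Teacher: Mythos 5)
Your proof is correct and, while it shares the Arzelà--Ascoli/compact-convergence step with the paper, the mechanism for identifying the sign of $\langle x,h_\infty(x)\rangle$ is genuinely different. You write $\nabla_x F(cx)/c$ via the fundamental theorem of calculus as a radial integral of the expected Hessian along $[0,cx]$ and split that integral at $t_c=R/(c\|x\|)$, so the sub-interval where the eigenvalue bound of \Cref{asm: SGD2} is unavailable has length $t_c\to 0$; this kills the bad contribution and leaves $\langle x,h_\infty(x)\rangle\le -(\lambda/2)\|x\|^2$, after which $V(x)=\tfrac12\|x\|^2$ gives global exponential stability directly. The paper instead invokes McLeod's vector-valued mean value theorem to represent $\nabla_x F(c_n x)-\nabla_x F(z)$ via a convex combination $\mathbf{A}(c_n x,z)$ of Hessians along the segment $\overline{z(c_n x)}$, concludes $\lambda_{\min}(\mathbf{A}(x))>0$, and appeals to LaSalle's invariance principle. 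Your radial FTC argument is arguably cleaner on one technical point: because the ray from $0$ to $cx$ is radial, the region where the Hessian bound fails is automatically a shrinking initial interval, whereas the paper's MVT argument tacitly requires choosing $z$ on the ray through $x$ so that the segment $\overline{z(c_n x)}$ avoids the ball $\{\|y\|\le r\}$ -- a step the paper does not spell out. Your version also yields the sharper quantitative conclusion (exponential decay with explicit rate $\lambda$) and the uniqueness-of-equilibrium argument for free, rather than deferring to LaSalle.
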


\begin{proof}[Proof of \Cref{thm: SGD}]
Rewrite the SGD iterations as
\begin{align}
    x_{n+1}^i 
    &= x_n^i + a(n)\left[ h(x^1_{n-\tau_{i1}(n)}, \ldots, x^D_{n-\tau_{iD}(n)}) + M^i_{n+1} \right],
\end{align}
with 
\begin{equation}
    M^i_{n+1} \coloneqq \nabla_x F(x^1_{n-\tau_{i1}(n)}, \ldots, x^D_{n-\tau_{i1}(n)})-\nabla_{x_i}  f(x^1_{n-\tau_{i1}(n)}, \ldots, x^D_{n-\tau_{i1}(n)}, \xi^i_n).
\end{equation}
\Cref{lem: lipschitz_in_mean} shows that $h(x)$ Lipschitz continuous and thus satisfies \Cref{asm: lipschitz}. Next, since $\xi^i_n$ are i.i.d., \Cref{asm: SGD}(c) yields that \Cref{asm: noise} holds for $M_{n+1}$. Finally, \Cref{lem: scaling_limit} shows that \Cref{asm: BMT_new} holds. Hence, \Cref{thm: genBMT} shows that $x_n$ is stable almost surely and converges almost surely to a potential sample path-dependent compact connected internally chain transitive invariant set of the ODE $\dot{x}(t) = h(x(t))$. By \Cref{asm: SGD2}, $F$ is coercive and thus itself a Lyapunov function for $\dot{x}(t) = h(x(t))$, hence the only possible invariant set is the set of critic points of $F$.
\end{proof}

\section{Application II: Stochastic Approximation with Momentum}
\label{sec: app2}
 Momentum methods have been extensively studied for gradient-based schemes since the seminal works of Polyak \cite{polyak1964some} and Nesterov \cite{nesterov1983method}. During the last decade, momentum-based methods have seen more and more attention due to their success in deep learning and machine learning \cite{sutskever2013importance,wilson2017marginal}. Notably, momentum has been shown to improve the rate of convergence and has also been shown to help in the avoidance of saddle points \cite{jin2017escape}. 

One of the most studied schemes is SGD with Polyaks heavy-ball momentum known as the stochastic heavy ball (SHB) method. SHB was studied extensively in the last years \cite{Gadat2018heavyball,liu2020improved,sebbouh2021almost,liu2022almost}. See \cite{barakat2021convergence} for a recent detailed analysis of momentum-based gradient schemes using a dynamical systems approach. In \cite{liu2022almost} it was shown for the first time that SHB converges almost surely to a stationary point of non-convex objective functions. 
The assumptions made in \cite{liu2022almost} are the typical assumptions made in a stochastic approximation analysis \cite[Sec. 2]{borkar2009stochastic}. The natural question is therefore whether general stochastic approximations with heavy ball momentum also converge almost surely to an equilibrium.

Stochastic approximation iterations with momentum have also recently seen more attention due to their use in reinforcement learning. \cite{devraj2019matrix} propose a matrix momentum SA method with optimal asymptotic covariance for a class of linear stochastic approximations.  \cite{mou2020linear} studies Polyak-Ruppert averages for linear SA with heavy-ball momentum. In \cite{avrachenkov2022online} a specific one-dimensional SA estimator with heavy-ball momentum was studied for estimating change rates of web pages. In \cite{deb2022gradient, deb2021n} a BMT style Theorem was proven for multi-time scale stochastic approximations. The authors then apply this framework to linear temporal difference learning with heavy-ball momentum which can be viewed as a multi-time scale stochastic approximation. Notably, the above analyses of linear SA with heavy-ball momentum use time-varying momentum parameter $\beta_n \nearrow \infty$.

To our knowledge, the current literature lacks stability and convergence conditions for general stochastic approximation iterations with heavy-ball momentum. We therefore consider the following iteration recalled from \eqref{eq: momentum_intro}:
\begin{equation}
\begin{split}
    x_{n+1} &= x_n + a(n) m_k \\
    m_k &= \beta m_{k-1} + (1-\beta)g(x_k)
\end{split} 
\end{equation}
with $m_0 = 0$, $\beta \in [0,1)$ and $g(x_k) \coloneqq h(x_k) + M_{k+1} $. As mentioned in the introduction, the idea is to rewrite the moving average of the past drift terms using a determinism AoI sequence: 
\begin{equation}
\label{eq: momentum_intro_recalled}
    x_{n+1} = x_n + a(n) (1-\beta)\left[\sum_{i=n-\tau(n)}^n \beta^{n-i} g(x_i) \right] +  a(n) (1-\beta)\left[\sum_{i=1}^{n-\tau(n)-1}\beta^{n-i} g(x_i) \right],
\end{equation}
with deterministic AoI $\tau(n) = \lceil \frac{n}{\log(n+1)}\rceil$. The iteration can now be studied using the tools presented in this paper. This is possible since the second summation is merely a vanishing error. This leads to our last result.

\begin{theorem}[BMT for SA with heavy-ball momentum]
    \label{thm: momentum}
    Suppose that 
    \begin{itemize}
    \item $h$ is Lipschitz continuous and satisfies \Cref{asm: BMT} (or \Cref{asm: BMT_new}),
    \item $M_{n+1}$ is a zero mean square integrable martingale Difference sequence with \\$\Ew{\norm{M_{n+1}}^2 \mid \cF_n} \le K^2 (1+\norm{x_n})$, where $\cF_n \coloneqq \sigma(x_1, M_{2}, \ldots, M_{n} )$.
    \item $a(n) \in \cO(\frac{1}{n})$
\end{itemize}
then the stochastic heavy-ball iteration \eqref{eq: momentum_intro} is stable almost surely.
\end{theorem}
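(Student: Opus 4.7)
The plan is to rewrite the heavy-ball iteration as a distributed SA with a deterministic AoI $\tau(n) = \lceil n/\log(n+1)\rceil$, and then invoke the machinery of \Cref{thm: main_distributed} (or \Cref{thm: genBMT}). First, unrolling the momentum with $m_0 = 0$ yields $m_n = (1-\beta)\sum_{i=1}^n \beta^{n-i} g(x_i)$; substituting and splitting at $i = n-\tau(n)$ as in \eqref{eq: momentum_intro_rewritten}, and using the identity $(1-\beta)\sum_{i=n-\tau(n)}^n \beta^{n-i} = 1-\beta^{\tau(n)+1}$, one obtains the standard SA form
\[
x_{n+1} = x_n + a(n)\bigl[h(x_n) + e_n + \mathcal{M}_{n+1}\bigr],
\]
with $\mathcal{M}_{n+1} \coloneqq (1-\beta)\sum_{i=n-\tau(n)}^{n}\beta^{n-i}M_{i+1}$ and
\[
e_n = \tilde e_n - \beta^{\tau(n)+1} h(x_n) + B_n, \qquad \tilde e_n \coloneqq (1-\beta)\sum_{i=n-\tau(n)}^{n-1}\beta^{n-i}\bigl(h(x_i)-h(x_n)\bigr),
\]
where $B_n \coloneqq (1-\beta)\sum_{i=1}^{n-\tau(n)-1}\beta^{n-i}g(x_i)$ is the ``old'' tail.

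The second step is to dispose of the three components of $e_n$ that are not the AoI-style error $\tilde e_n$. Because $\beta^{n-i}\le\beta^{\tau(n)+1}$ for $i\le n-\tau(n)-1$ and $\tau(n)\asymp n/\log n$, the factor $\beta^{\tau(n)+1}$ decays faster than any polynomial. Combined with the linear growth $\norm{h(x)}\le K(1+\norm{x})$ and orthogonality of martingale increments for the noise tail in $B_n$, $B_n$ is dominated by $o(1)\cdot(1+\sup_{i\le n}\norm{x_i})$ in $L_2$ and almost surely; after rescaling by $s(m(n))$, using $\norm{x_n}/s(m(n))\le 1$, both $B_n$ and $\beta^{\tau(n)+1}h(x_n)$ vanish. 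For the noise $\mathcal{M}_{n+1}$, decompose $\mathcal{M}_{n+1} = (1-\beta)M_{n+1} + A_n$ with $A_n\in\cF_n$ the sum of past increments. The genuine martingale-difference piece $(1-\beta)M_{n+1}$ inherits the $L_2$ bound of \Cref{asm: noise} in its weaker linear-in-norm form. For $A_n$, swapping the order of summation gives $\sum_n a(n) A_n = \sum_i c_i M_{i+1}$ with $c_i = \cO(a(i))$ by the geometric weights, so this is a convergent square-integrable martingale series analogous to $\hat\zeta_n$ in \Cref{lem: martingale_convergence}.

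Third, apply the stability scheme of \Cref{sec: main_results} to the rewritten iteration. The deterministic analogue of \Cref{lem: step_size_AoI_sum} reads $\sum_{k=n-\tau(n)}^{n-1}a(k) = \cO(1/\log n)\to 0$, since $a(n)\in\cO(1/n)$ and $\tau(n) = \lceil n/\log(n+1)\rceil$. Using Lipschitz continuity and $\norm{x_i - x_n}\le \sum_{k=i}^{n-1}\norm{x_{k+1}-x_k}$, a swap of summation order yields
\[
\norm{\tilde e_n} \;\le\; L \sum_{k=n-\tau(n)}^{n-1}\beta^{n-k}\,\norm{x_{k+1}-x_k},
\]
so $\norm{\tilde e_n}$ is a geometrically-weighted sum of past iteration increments, the deterministic counterpart of \eqref{eq: L1_error_iterative}. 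Build the rescaled trajectory $\hat x(t)$ using the monotone scaling $s(m)$ from \eqref{eq: scaling_seq}; the recursive inequalities for $\tilde e_n$ now carry deterministic geometric weights in place of random AoI stepsize sums, and \Cref{lem: new_gronwall} still applies because the averaging condition holds: the weighted stepsize sum $(1-\beta)\sum_{i=n-\tau(n)}^{n-1}\beta^{n-i}a(i)$ is dominated by $a(n-\tau(n))\to 0$. The $L_2$ bound, the a.s.\ boundedness of $\hat x(t)$, vanishing of the combined error (in the spirit of \Cref{cor: L1_error_to_zero}), and the contradiction argument from \Cref{thm: main_distributed} then give $\sup_n\norm{x_n}<\infty$ a.s.

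The main obstacle I anticipate is controlling the $\cF_n$-measurable piece $A_n$ inside the recursive inequality; unlike $(1-\beta)M_{n+1}$, it cannot be treated as a mean-zero perturbation. The clean route is to absorb $A_n$ into the error term and bound $\Ew{\norm{A_n}^2}$ directly via orthogonality of $\{M_{i+1}\}$ and the geometric weights, which produces the same shape of bound as for $\tilde e_n$ after rescaling. A secondary care point is that the theorem's noise bound is linear in $\norm{x_n}$ rather than quadratic, but this is strictly weaker than \Cref{asm: noise} and so the $L_2$ propagation estimates of \Cref{sec: martingale} go through a fortiori.
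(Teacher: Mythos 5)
Your proposal follows the same route as the paper's: split the moving average at the deterministic AoI $\tau(n)=\lceil n/\log(n+1)\rceil$, show the ``old'' tail is a vanishing error, recast the remainder as an SA iteration with AoI-type drift error, derive the recursive inequality, and feed it into the Gronwall/BMT machinery of \Cref{sec: main_results}. The only cosmetic divergence is that you keep the stepsize $a(n)$ and carry the extra correction $-\beta^{\tau(n)+1}h(x_n)$, whereas the paper defines $c(n)=\sum_{i=n-\tau(n)}^{n}\beta^{n-i}$ and absorbs the normalization into $\tilde a(n)=a(n)(1-\beta)c(n)$ so that $h(x_n)$ appears with coefficient one; these are algebraically equivalent via $1-\beta^{\tau(n)+1}=(1-\beta)c(n)$.

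Where you genuinely improve on the paper is the noise term. The paper's proof of \Cref{thm: momentum} asserts that $\tilde M_{n+1}=\frac{1}{c(n)}\sum_{i=n-\tau(n)}^n\beta^{n-i}M_{i+1}$ ``is also a zero-mean martingale difference sequence with respect to $\cF_n$,'' but this is false as stated: for $i<n$, $M_{i+1}$ is $\cF_n$-measurable, so $\Ew{\tilde M_{n+1}\mid\cF_n}=\frac{1}{c(n)}\sum_{i=n-\tau(n)}^{n-1}\beta^{n-i}M_{i+1}\ne 0$ in general. Your decomposition $\mathcal M_{n+1}=(1-\beta)M_{n+1}+A_n$, with $A_n\in\cF_n$, isolates the genuine martingale difference, and your Fubini reindexing $\sum_n a(n)A_n=\sum_i c_i M_{i+1}$ with deterministic $c_i\in\cO(a(i))$ (because of the geometric weights and $a(n)$ decreasing) yields a bona fide square-integrable martingale series. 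One thing you should make explicit, since it is the same subtlety the paper navigates in \Cref{cor: noise_L1_error}: in the stability proof you must control the \emph{rescaled} series $\sum_n a(n)A_n/s(m(n))$, and after the swap the coefficient of $M_{i+1}$ involves $1/s(m(n))$ for $n\ge i$, which is not $\cF_i$-measurable; use the monotonicity of $s(m)$ to dominate $1/s(m(n))\le 1/s(m(i))$ and argue via Abel's test or Kronecker's lemma as in \Cref{cor: noise_L1_error}. With that filled in, your sketch is correct.
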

\begin{corollary}
    \label{cor: momentum}
    Under the assumptions of \Cref{thm: momentum}, the stochastic heavy-ball iteration \eqref{eq: momentum_intro} converges almost surely to a potential sample path-dependent compact connected internally chain transitive invariant set of $\dot{x}(t) = h(x(t))$.
\end{corollary}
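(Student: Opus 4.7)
The plan is to leverage \Cref{thm: momentum} for almost sure stability and then recast the stochastic heavy-ball recursion in the standard stochastic-approximation form
\begin{equation*}
x_{n+1}=x_n+a(n)\bigl[h(x_n)+b_n+U_n\bigr]
\end{equation*}
with $b_n\to 0$ almost surely and $\sum_k a(k)U_k$ convergent almost surely. Together with almost sure boundedness of $\{x_n\}$ and Lipschitz continuity of $h$, this triggers the standard ODE-method convergence result (as invoked for \Cref{cor: convergence} and in \cite[Sec.~2.2]{borkar2009stochastic}), yielding convergence to a sample-path-dependent compact connected internally chain transitive invariant set of $\dot x(t)=h(x(t))$.

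The reduction to standard SA form proceeds along the decomposition \eqref{eq: momentum_intro_rewritten} already used in \Cref{thm: momentum}. First I would verify that, under stability, $\sup_n\norm{m_n}<\infty$ almost surely: $m_n$ is a convex combination of $g(x_i)=h(x_i)+M_{i+1}$; Lipschitz continuity of $h$ controls the drift part under stability, and the deterministic-weight geometric sum $(1-\beta)\sum_{i=1}^n \beta^{n-i}M_{i+1}$ converges a.s.\ by square-integrable martingale convergence (square-summable weights, bounded conditional second moments under stability). Boundedness of $m_n$ then shows that the ``old'' tail $(1-\beta)\sum_{i=1}^{n-\tau(n)-1}\beta^{n-i}g(x_i)$ has a.s.\ geometric decay $\cO(\beta^{\tau(n)+1})$ up to polynomial factors; since $\tau(n)=\lceil n/\log(n+1)\rceil\to\infty$, this contribution is $o(1)$ a.s.

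This isolates the ``new'' AoI-style sum $(1-\beta)\sum_{i=n-\tau(n)}^n\beta^{n-i}g(x_i)$. Splitting $g(x_i)=h(x_i)+M_{i+1}$, the drift part is handled by partitioning the window $[n-\tau(n),n]$ at a logarithmic threshold $N(n)=\lceil -c\log n/\log\beta\rceil$: beyond $N(n)$ the cumulative geometric weight is $\cO(n^{-c})$ and $\norm{h(x_i)}$ is bounded, so that part is $o(1)$; inside $N(n)$, the iteration identity plus $a(n)\in\cO(1/n)$ and $\sup_j\norm{m_j}<\infty$ give $\norm{x_n-x_i}=\cO(\log(n)/n)\to 0$ uniformly, hence by Lipschitz continuity $h(x_i)=h(x_n)+o(1)$ and the weighted drift average equals $h(x_n)+b_n$ with $b_n\to 0$ a.s. The noise contribution $(1-\beta)\sum_{i=n-\tau(n)}^n\beta^{n-i}M_{i+1}$ is reorganized inside the cumulative SA noise $\sum_{k\le n}a(k)U_k$ by Abel summation, yielding a single series $\sum_i W_{n,i}M_{i+1}$ with deterministic weights $W_{n,i}$ that increase in $n$ to a limit $W_{\infty,i}=\cO(a(i))$; square-summability of $a(n)$ with the conditional second moment bound on $M_{n+1}$ under stability then gives almost sure convergence of this accumulated martingale series, and the tail discrepancy $W_{\infty,i}-W_{n,i}$ vanishes geometrically, upgradable to pathwise convergence by a Borel--Cantelli argument.

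The main obstacle I anticipate is precisely the Abel-summation estimate for the noise. The subtlety is that the inner AoI window $\tau(k)=\lceil k/\log(k+1)\rceil$ grows without bound, so each martingale increment $M_{i+1}$ appears in unboundedly many outer summands $k\ge i$. The saving feature is that the geometric factor $\beta^{k-i}$ truncates the inner $k$-sum irrespective of how $\tau(k)$ behaves, giving $\sum_{k\ge i}a(k)\beta^{k-i}=\cO(a(i))$ uniformly in $n$; confirming this bound together with the corresponding tail control on $W_{\infty,i}-W_{n,i}$ is the only nontrivial step. Once in hand, the rest of the reduction is routine bookkeeping and the standard ODE-method convergence finishes the corollary.
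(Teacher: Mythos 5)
Your overall reduction --- establish stability from \Cref{thm: momentum}, recast the heavy-ball recursion as a standard SA iteration with vanishing additive error and almost surely convergent accumulated noise, then invoke the ODE method --- matches the paper's high-level strategy. However, there is a genuine error at the first step of your reduction. The claim that $(1-\beta)\sum_{i=1}^n\beta^{n-i}M_{i+1}$ converges almost surely ``by square-integrable martingale convergence,'' and hence that $\sup_n\norm{m_n}<\infty$ almost surely, is false: the weights $\beta^{n-i}$ depend on $n$ and re-concentrate on the most recent increment at every step, so this is a geometric moving average (the AR(1)-type recursion $Y_n=\beta Y_{n-1}+(1-\beta)M_{n+1}$), not a partial sum of a fixed square-summable weighted martingale. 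Under bounded conditional second moments it is asymptotically stationary and does \emph{not} converge; for nondegenerate i.i.d.\ Gaussian $M_{i+1}$ one has $\sup_n\norm{Y_n}=+\infty$ almost surely. Since your old-tail estimate $\cO(\beta^{\tau(n)+1})$ and your inner-window estimate $\norm{x_n-x_i}=\cO(\log n/n)$ both rest on $\sup_n\norm{m_n}<\infty$, the gap propagates into both.

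The repair is the one the paper already supplies in \Cref{lem: momentum_expgrowth}: do not ask for boundedness of $m_n$, only for sub-linear growth. A Borel--Cantelli / martingale-convergence argument from $\Ew{\norm{M_{n+1}}^2\mid\cF_n}\le K^2(1+\norm{x_n})$ under stability yields $\norm{M_{n+1}}=o(n)$ a.s., hence $\norm{m_n}=o(n)$; because $\tau(n)\gg\log n$, the old tail $\cO(\beta^{\tau(n)}\,n)$ is still $o(1)$ and your inner-window bound still vanishes, just at a slower rate. You should also note that the paper's own proof of the corollary is far shorter than your two-threshold decomposition plus Abel-summation bookkeeping: with stability in hand it simply re-invokes the recursive bound \eqref{eq: momentum_first_ineq} from the proof of \Cref{thm: momentum}, bounds $\tilde{e}_n$ uniformly via the stability radius, uses \Cref{lem: aoi_bound} and \Cref{lem: step_size_AoI_sum} to send the accumulated stepsize over the $\tau(n)$-window to zero, and reads off $\norm{e_n}\to0$ from \eqref{eq: momentum_zeros_ineq}. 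Your Abel reorganization with weights $W_{n,i}$ is in itself a reasonable and arguably more explicit way to handle the fact that $\tilde{M}_{n+1}$ is not a martingale difference in $\cF_n$, but that extra care is not where the proposal breaks.
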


\begin{remark}
    The following proof can be extended to every $a(n)$ that is not summable but square summable. For this choose $\tau(n) = \lceil \frac{n}{\sum_{k=1}^{n} a(n) }\rceil$
\end{remark}

\subsection{Proof of Theorem 4 and Corollary 5}

Define $c(n) \coloneqq \sum_{i=n-\tau(n)}^n \beta^{n-i}$ and define the tail error
\begin{equation}
    \delta_n \coloneqq \frac{1}{c(n)}  \sum_{i=1}^{n-\tau(n)-1} \beta^{n-i} g(x_i).
\end{equation}
It is now not difficult to show that
\begin{equation}
\label{eq:c(n)_properties}
    \sup_{n\ge0} c(n) = \lim\limits_{n\to \infty} c(n) = \frac{1}{1-\beta} \text{ and } \sup_{n\ge 1} \frac{1}{c(n)} \le 1.
\end{equation}
We can further show that the tail error vanishes almost surely. The proof is given in the Appendix.
\begin{lemma}
    \label{lem: momentum_expgrowth}
    $\norm{\delta_n} \in o(1).$
\end{lemma}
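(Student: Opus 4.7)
The key observation I would exploit is that with $\tau(n) = \lceil n/\log(n+1)\rceil$, the geometric weight $\beta^{\tau(n)} = \exp(-\abs{\log\beta}\,\tau(n))$ decays faster than any polynomial in $n$. The plan is therefore in two steps: establish an almost sure polynomial growth envelope for $\norm{x_n}$, and then observe that this envelope is crushed by the super-polynomial decay of $\beta^{\tau(n)}$ in the definition of $\delta_n$.

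First I would establish a crude almost sure polynomial bound $\norm{x_n} \in \cO(n^r)$ for some deterministic $r>0$. Unrolling $m_k = (1-\beta)\sum_{i=1}^k \beta^{k-i} g(x_i)$ and applying Lipschitz continuity of $h$ gives
\begin{equation}
    \norm{m_k} \le L\sup_{i\le k}\norm{x_i} + C + \sup_{i\le k}\norm{M_{i+1}}.
\end{equation}
Setting $X_n \coloneqq \sup_{i\le n}\norm{x_i}$, the main iteration combined with $a(n) \in \cO(1/n)$ yields the recursive inequality
\begin{equation}
    X_{n+1} \le X_n\bigl(1+La(n)\bigr) + a(n)\bigl(C + \sup_{i\le n}\norm{M_{i+1}}\bigr),
\end{equation}
so the discrete Gronwall inequality delivers $X_n \le n^{La}\bigl(X_0 + \cO(\log n) + \sup_{i\le n}\norm{M_{i+1}}\bigr)$. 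The martingale supremum I would control by a stopping-time bootstrap: freezing the iteration at the first time $\norm{x_n}$ exceeds $n^r$ makes the conditional second moment of $M_{n+1}$ polynomial, so Chebyshev and Borel--Cantelli give $\sup_{i\le n}\norm{M_{i+1}} \in \cO\bigl(n^{(r+1)/2+\varepsilon}\bigr)$ on the stopped path. Choosing $r$ with $La + (r+1)/2 + \varepsilon < r$ makes the Gronwall output strictly smaller than the threshold $n^r$ for all large $n$, so the stopping time is infinite almost surely and the unstopped iterates satisfy $\norm{x_n} \in \cO(n^r)$ a.s.

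Given this envelope, the conclusion is quick. For $i \le n-\tau(n)-1$, the pointwise bound $\norm{g(x_i)} \le L\norm{x_i} + C + \norm{M_{i+1}}$ together with Step~1 (and the same Borel--Cantelli argument applied to $M_{i+1}$) yields an almost sure polynomial envelope $\norm{g(x_i)} \le P(i) \le P(n)$. Combining $1/c(n)\le 1$ from \eqref{eq:c(n)_properties} with the geometric tail estimate $\sum_{i=1}^{n-\tau(n)-1}\beta^{n-i} \le \beta^{\tau(n)+1}/(1-\beta)$ I obtain
\begin{equation}
    \norm{\delta_n} \le \frac{\beta^{\tau(n)+1}}{1-\beta}\, P(n),
\end{equation}
and the right-hand side tends to zero almost surely because $\beta^{\tau(n)}$ decays faster than any polynomial.

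The main obstacle will be Step~1: the conditional second moment of $M_{n+1}$ depends on $\norm{x_n}$ itself, creating a circular dependency. The stopping-time truncation is designed to break this loop, and it is critical that the moment bound in \Cref{thm: momentum} is linear in $\norm{x_n}$ rather than quadratic; this is precisely what permits the fixed point $r > 2La + 1$ to exist and close the Gronwall bound. The subsequent passage from \Cref{thm: momentum} to \Cref{cor: momentum} should then follow exactly as in \Cref{cor: convergence}, since after rewriting \eqref{eq: momentum_intro_recalled} the iteration has the form of a distributed SA with deterministic AoI $\tau(n)$ and an $o(1)$ additive error $\delta_n$, to which the tools of \Cref{sec: main_results} apply directly.
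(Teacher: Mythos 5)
Your proposal identifies exactly the right mechanism — an almost-sure polynomial growth envelope for $\norm{x_n}$ that is then crushed by the super-polynomial decay of $\beta^{\tau(n)}$ with $\tau(n) = \lceil n/\log(n+1)\rceil$ — and this is the same core idea the paper uses. Where you diverge is in how the envelope is obtained. The paper works in $L_2$ first: a Gronwall bound on $\Ew{\norm{x_n}^2}$ requires no truncation because one can take expectations through the recursion directly, and then the martingale convergence theorem (\Cref{martingale_conv}) applied to $\sum_n a(n)M_{n+1}/n$ followed by Kronecker-type reasoning yields an almost-sure polynomial bound on $\norm{M_{n+1}}$. Your route instead attacks the almost-sure envelope directly via a stopping-time bootstrap plus Chebyshev and Borel--Cantelli. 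Both get there, but the paper's $L_2$-first ordering is what dissolves the circularity you flag at the end: since the martingale moment bound is evaluated in expectation rather than pathwise, there is never a stage where one needs $\norm{x_n}$ to already be controlled in order to control $M_{n+1}$. Your stopping-time device also works, but it needs more care than the sketch shows — the Gronwall output on the stopped path carries a sample-point-dependent constant (from the finite number of early times where the Borel--Cantelli tail has not yet kicked in), so the comparison $X_n < n^r$ only holds for $n$ beyond a random threshold, and one must argue separately that this threshold is reached before $\sigma$; the cleanest fix is to run the bootstrap on windows $[N_0,\infty)$ with $N_0 \to \infty$ and take a union. That said, you correctly spotted that the linear (not quadratic) dependence of the noise bound on $\norm{x_n}$ in \Cref{thm: momentum} is what makes the fixed point $r > 2La+1$ available — the paper exploits the same fact. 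Net comparison: the paper's version is shorter and circularity-free; yours is more elementary (no appeal to the martingale convergence theorem) but the bookkeeping around the stopping time is genuinely more delicate.
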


Next, write \eqref{eq: momentum_intro_recalled} as
\begin{equation}
    \label{eq: momentum_analysis_iteration}
    x_{n+1} = x_n + \tilde{a}(n) \left[ h(x_n) + \tilde{M}_{n+1} + e_n + \delta_n\right].
\end{equation}
with $\tilde{a}(n) \coloneqq a(n) (1-\beta)c(n)$ and
\begin{equation}
    e_n \coloneqq \frac{1}{c(n)} \sum_{i=n-\tau(n)}^{n-1} \beta^{n-i} \left( h(x_i) - h(x_n) \right), \quad \tilde{M}_{n+1} \coloneqq \frac{1}{c(n)}\left( \sum_{i=n-\tau(n)}^n  \beta^{n-i} M_{i+1}\right).
\end{equation}
Equation \eqref{eq: momentum_analysis_iteration} can now be analyzed with  the tools and techniques presented in \Cref{sec: main_results}. Specifically, \eqref{eq: momentum_analysis_iteration} is a SA iteration with Martingale difference noise $\tilde{M}_{n+1}$, an error $e_n$ due to the deterministic AoI sequence and the additional error $\delta_n \in o(1)$.

\begin{proof}[Proof of \Cref{thm: momentum}]
    First, observe that $\tilde{M}_{n+1}$ is also a zero-mean martingale difference sequence with respect to $\cF_n$. Further, using the Lipschitz continuity of $h$, it follows that
    \begin{equation}
        \label{eq: momentum_zeros_ineq}
        \norm{e_n} \le L \sum_{i=n-\tau(n)}^{n-1} \beta^{n-i} \norm{x_n-x_i}. 
    \end{equation}
    Analogously to \Cref{sec: main_results}, we will now derive recursive inequalities in $L_2$ and in norm. We will illustrate this for the $L_2$ case. Define $\tilde{e}_n \coloneqq \sum_{i=n-\tau(n)}^{n-1} \beta^{n-i} \norm{x_n-x_i}$.

    With \eqref{eq: momentum_analysis_iteration} and the unit growth property of $h$ we have that
    \begin{align}
        \label{eq: momentum_first_ineq}
        \norm{x_n - x_i} \le \sum_{j=i}^{n-1} \tilde{a}(j) K(1+\norm{x_j}) + \norm{\sum_{j=i}^{n-1} \tilde{a}(j) \tilde{M}_{j+1}} + \sum_{j=i}^{n-1} \tilde{a}(j) L \tilde{e}_j + \sum_{j=i}^{n-1} \tilde{a}(j) \norm{\delta_j}.
    \end{align}
    Next, use the $L_2$ bound of $M_{n+1}$ for each term in $\tilde{M}_{j+1}$ and expand the resulting terms:
    \begin{equation}
        \Ew{\norm{\tilde{M}_{j+1}}^2}^\frac{1}{2} \le \frac{K}{1-\beta}\left(1+ \Ew{\norm{x_j}^2}^\frac{1}{2} + \sum_{i=j-\tau(j)}^{j-1} \beta^{j-i} \Ew{\norm{x_j-x_i}^2}^\frac{1}{2}\right)
    \end{equation}
    Define $\tilde{e}^{L_2}_n \coloneqq \sum_{i=n-\tau(n)}^{n-1} \beta^{n-i} \Ew{\norm{x_n-x_i}^2}^{\frac{1}{2}} $. Then 
    \begin{equation}
        \Ew{\norm{x_n - x_i}^2}^\frac{1}{2} \le \sum_{j=i}^{n-1} \tilde{a}(j) K_1(1+ \Ew{\norm{x_j}^2}^\frac{1}{2})
        + \sum_{j=i}^{n-1} \tilde{a}(j) K_2 \tilde{e}^{L_2}_j + \sum_{j=i}^{n-1} \tilde{a}(j) \Ew{\norm{\delta_j}^2}^\frac{1}{2}.
    \end{equation}
    for some constants $K_1, K_2 >0$.
    Thus, we have that
    \begin{align}
       \tilde{e}^{L_2}_n &\le \sum_{i=n-\tau(n)}^{n-1} \beta^{n-i}  \left(\sum_{j=i}^{n-1} \tilde{a}(j) K_1(1+ \Ew{\norm{x_j}^2}^\frac{1}{2})
        + \sum_{j=i}^{n-1} \tilde{a}(j) K_2 \tilde{e}^{L_2}_j + \sum_{j=i}^{n-1} \tilde{a}(j) \Ew{\norm{\delta_j}^2}^\frac{1}{2}\right) \\
        &\le K_1 (1+ \sup_{j\le n-1}\Ew{\norm{x_j}^2}^\frac{1}{2}) \left(\sum_{i=n-\tau(n)}^{n-1} \beta^{n-i} \left(\sum_{j=i}^{n-1} \tilde{a}(j) \right)\right) \nonumber \\
        &+ K_2 \left(\sum_{i=n-\tau(n)}^{n-1}  \sum_{j=i}^{n-1} \beta^{n-i} \tilde{a}(j) \tilde{e}^{L_2}_j \right) + \left(\sum_{i=n-\tau(n)}^{n-1}  \sum_{j=i}^{n-1} \beta^{n-i} \tilde{a}(j) \Ew{\norm{\delta_j}^2}^\frac{1}{2}\right), \\
        &\le \frac{K_1}{1-\beta} (1+ \sup_{j\le n-1}\Ew{\norm{x_j}^2}^\frac{1}{2}) \left(\sum_{j=n-\tau(n)}^{n-1}  \tilde{a}(j) \right) \nonumber\\
        &+ \frac{K_2}{1-\beta} \left(\sum_{j=n-\tau(n)}^{n-1}  \tilde{a}(j) \tilde{e}^{L_2}_j \right) + \frac{1}{1-\beta}\left(\sum_{j=n-\tau(n)}^{n-1}  \tilde{a}(j) \Ew{\norm{\delta_j}^2}^\frac{1}{2}\right),
    \end{align}
    where the last inequality follows from \eqref{eq:c(n)_properties}. 
   The similarity to the recursive inequality \eqref{eq: error_iterative} should be clear. The same inequality will hold when the sequences are scaled by the monotone scaling sequence used in \Cref{sec: main_results}. We can thus apply our new Gronwall-type inequality \Cref{lem: new_gronwall}. As $\Ew{\norm{\tilde{e}_n}^2}^{\frac{1}{2}} \le  \tilde{e}^{L_2}_n$, evaluating the recursion for $\tilde{e}^{L_2}_n$ leads to the required $L_2$ bound for $\tilde{e}_n$. Furthermore, from \eqref{eq: momentum_first_ineq} we also arrive at a recursive inequality $\tilde{e}_n$ itself. The stability analysis in \Cref{sec: main_results} then goes through using the $L_2$ bound and the norm bound for $\tilde{e}_n$ along exactly the same line of argument. The tail error $\delta_n$ only adds neglectable terms in the analysis. We omit the details to avoid redundancies.
\end{proof}

\begin{proof}[Proof of \Cref{cor: momentum}]
    \Cref{thm: momentum} shows that there is sample path dependent radius $R>0$, such that $x_n \in \cB_R(0)$ for all $n\ge 1$. It thus follows that $\tilde{e}_n$ as defined in 
 \Cref{thm: momentum} satisfies $\tilde{e}_n \le \frac{2B}{1-\beta}$. Further, using stability the martingale convergence theorem yields that $\sum_{j=1}^{n} \tilde{a}(j) \tilde{M}_{j+1}$ converges almost surely. \Cref{lem: aoi_bound} and \Cref{lem: step_size_AoI_sum} then imply that $\sum_{j= i}^{n-1} \tilde{a}(j) \to 0$ for all $ n - \tau(n) \le i \le n-1 $. It follows that the right-hand side of \eqref{eq: momentum_first_ineq} converges to zero almost surely and thus \eqref{eq: momentum_zeros_ineq} shows that $\norm{e_n} \to 0$ almost surely. Equation \eqref{eq: momentum_analysis_iteration} is therefore a standard SA iteration with vanishing additive error and the corollary follows.
\end{proof}

\appendix

\section{Background}
\begin{lemma}[Discrete Gronwall Inequality {\cite[Appendix B]{borkar2009stochastic}} ]
    \label{gronwall_ineq}
    Let $x_n$, $a_n$ nonnegative (respectively positive) sequences and $C,L>0$ scalars such that for all $n$, 
    $$x_{n+1} \le C + L \sum_{m=0}^n a_m x_m,$$
    then $x_{n+1} \le  C \exp(L \sum_{m=0}^{n} a_m)$.
\end{lemma}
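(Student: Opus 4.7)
The plan is to prove this by straightforward induction on $n$, combined with the elementary inequality $1 + t \le e^{t}$ for $t \ge 0$ to convert a product of factors into the exponential of a sum. The crucial identity that will make the induction work cleanly is a discrete telescoping identity for a suitable product of factors $(1 + La_m)$.

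First I would introduce the auxiliary products
\begin{equation*}
    P_n \coloneqq \prod_{m=0}^{n-1}(1 + L a_m), \qquad P_0 \coloneqq 1,
\end{equation*}
and aim to prove by induction that $x_n \le C\, P_n$ for every $n \ge 0$. The base case $x_0 \le C$ is obtained by reading the hypothesis at $n = -1$ with the usual empty-sum convention (i.e.\ the implicit starting assumption of the lemma).

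For the inductive step, suppose $x_m \le C P_m$ for every $m \le n$. Substituting into the hypothesis gives
\begin{equation*}
    x_{n+1} \;\le\; C + L\sum_{m=0}^{n} a_m x_m \;\le\; C\!\left(1 + L\sum_{m=0}^{n} a_m P_m\right).
\end{equation*}
The key observation is the telescoping identity $P_{m+1} - P_m = L a_m P_m$, which follows directly from the definition of $P_m$. Summing it from $m = 0$ to $n$ collapses to $L\sum_{m=0}^{n} a_m P_m = P_{n+1} - P_0 = P_{n+1} - 1$, and therefore $x_{n+1} \le C\, P_{n+1}$, completing the induction.

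To finish, I would apply the pointwise bound $1 + L a_m \le \exp(L a_m)$ factor by factor to conclude
\begin{equation*}
    P_{n+1} \;=\; \prod_{m=0}^{n}(1 + L a_m) \;\le\; \exp\!\left(L\sum_{m=0}^{n} a_m\right),
\end{equation*}
which together with $x_{n+1} \le C P_{n+1}$ yields the stated inequality. The proof is entirely routine; there is no real obstacle, only the mild bookkeeping of the base case, which is handled by the empty-sum reading of the hypothesis.
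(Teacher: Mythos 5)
Your proof is correct, and the telescoping identity $P_{m+1}-P_m = La_m P_m$ is exactly the right engine for the induction. The paper itself does not supply a proof of this lemma — it is quoted as a background result from Appendix B of the Borkar reference — so there is no in-paper argument to compare against; your argument is the standard one and matches what one finds in that reference (iterate the bound to produce the product $\prod_{m\le n}(1+La_m)$, then apply $1+t\le e^t$ term by term). You are also right to flag the base-case bookkeeping: the conclusion for $n=0$, namely $x_1\le Ce^{La_0}$, cannot follow from $x_1\le C+La_0 x_0$ alone unless $x_0\le C$, so the hypothesis must indeed be read as holding for $n=-1$ (empty sum), which is the intended convention. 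That reading is implicit in the paper's own use of the lemma, where the iteration being bounded always starts from a quantity already known to be at most the additive constant (e.g.\ $\norm{\hat x(t(n(m)))}\le 1$).
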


\begin{lemma}[Martingale Convergence Theorem {\cite[Appendix C]{borkar2009stochastic}} ]
    \label{martingale_conv}
    Let $\{(X_n, \cF_n)\}_{n\ge1}$ be a martingale, if $\Ew{\norm{X_n}^2}<\infty$ for all $n\ge1$, then $X_n$ converges almost surely on the set
    $$ \sum_{n \ge 1} \Ew{\norm{X_{n+1} - X_n}^2 \mid \cF_n} < \infty.$$
\end{lemma}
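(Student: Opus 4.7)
The plan is to prove this classical result by a localization argument that reduces convergence on the random set $\{A_\infty < \infty\}$ — where $A_n \coloneqq \sum_{m=1}^{n-1} \Ew{\norm{X_{m+1} - X_m}^2 \mid \cF_m}$ — to a family of globally $L^2$-bounded stopped martingales. The key observation is that $\{A_n\}$ is predictable (each summand is $\cF_m$-measurable with $m \le n-1$, so $A_n$ is $\cF_{n-1}$-measurable) and monotonically nondecreasing. Hence $A_\infty \coloneqq \lim_n A_n$ is well defined in $[0,\infty]$, and the random times $T_k \coloneqq \inf\{n \ge 1 : A_{n+1} > k\}$ (with $\inf \emptyset \coloneqq \infty$) are bona fide $\{\cF_n\}$-stopping times. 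Moreover $\{A_\infty < \infty\} = \bigcup_{k \ge 1}\{T_k = \infty\}$: if $A_\infty < \infty$ then any integer $k > A_\infty$ certifies $T_k = \infty$, and conversely $T_k = \infty$ forces $A_\infty \le k$. It therefore suffices to show a.s.\ convergence of $X_n$ on each $\{T_k = \infty\}$ and take a countable union.

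For each fixed $k$, the stopped process $X_n^{(k)} \coloneqq X_{n \wedge T_k}$ is again an $\{\cF_n\}$-martingale by optional stopping, with increments $X_{m+1}^{(k)} - X_m^{(k)} = \Ind{\{m < T_k\}}(X_{m+1}-X_m)$. Expanding $\norm{X_n^{(k)}}^2$ via Pythagoras — the cross terms vanish by conditioning on $\cF_m$, using that $\Ind{\{m<T_k\}} = \Ind{\{A_{m+1} \le k\}}$ is $\cF_m$-measurable — and then applying the tower property to the diagonal terms yields
\begin{equation*}
\Ew{\norm{X_n^{(k)}}^2} = \Ew{\norm{X_1}^2} + \sum_{m=1}^{n-1} \Ew{\Ind{\{m<T_k\}}\norm{X_{m+1}-X_m}^2} = \Ew{\norm{X_1}^2} + \Ew{A_{n\wedge T_k}}.
\end{equation*}
By the definition of $T_k$ one has $A_{n\wedge T_k} \le k$ pointwise, so $\sup_n \Ew{\norm{X_n^{(k)}}^2} \le \Ew{\norm{X_1}^2} + k < \infty$; i.e.\ $X^{(k)}$ is $L^2$-bounded.

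Doob's $L^2$-martingale convergence theorem (an $L^2$-bounded martingale converges a.s.\ and in $L^2$, via Doob's maximal inequality applied to the Cauchy tails of the increments) then gives an a.s.\ limit $X_\infty^{(k)}$ of $X_n^{(k)}$. On the event $\{T_k = \infty\}$ one has $X_n^{(k)} = X_n$ for every $n$, so $X_n$ itself converges a.s.\ on $\{T_k = \infty\}$. Unioning over $k$ delivers a.s.\ convergence on $\bigcup_k\{T_k = \infty\} = \{A_\infty < \infty\}$, which is the asserted set.

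The main obstacle is precisely the ``on a set'' nature of the statement: since $A_\infty$ can be infinite with positive probability, no global $L^2$ bound is available and one must truncate. The decisive technical point is the predictability of $\{A_n\}$, which is what lets $T_k$ be an honest stopping time and what guarantees the sharp bound $A_{n\wedge T_k} \le k$ (rather than $k$ plus a jump), so that the Pythagorean computation yields a uniform $L^2$ bound on the stopped martingale. Everything else — optional stopping, Pythagoras for martingale increments, and $L^2$ convergence — is standard once this truncation is in place.
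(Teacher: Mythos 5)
Your proof is correct, and it is precisely the classical localization argument (predictable increasing process, stopping-time truncation to an $L^2$-bounded stopped martingale, Doob's $L^2$ convergence, then union over $k$) that underlies this result in the cited source \cite[Appendix C]{borkar2009stochastic}; the paper itself states the lemma as a known textbook fact without proof. The only cosmetic remark is that the Pythagorean expansion for $X^{(k)}$ already follows from $X^{(k)}$ being a martingale, so the $\cF_m$-measurability of $\Ind{\{m<T_k\}}$ is really only needed in the tower-property step that identifies the diagonal sum with $\Ew{A_{n\wedge T_k}}$ — but your argument is sound as written.
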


\section{Missing proofs}
\label{sec: appendix}

\begin{proof}[Proof of \Cref{lem: aoi_bound}]
We present a proof for the difficult case, $p \in (0,1]$. The proof for $p>1$ follows from the first Borel-Centelli lemma and can be found in \cite{redder2022_3DPG} in the context of distributed reinforcement learning.

Fix, $\varepsilon \in (0,1)$. Using the stochastic dominance property it follows that
\begin{align}
    \sum_{n\ge1} \Pr{\tau(n^{\frac{1}{p}}) > \varepsilon n^{\frac{1}{p}}} \le \sum_{n\ge1} \Pr{\overline{\tau} > \varepsilon n^{\frac{1}{p}}} = \sum_{n\ge1} \Pr{\frac{1}{\varepsilon^p}\overline{\tau}^p >  n} \le 1 +  \frac{1}{\varepsilon^p}\Ew{\overline{\tau}^p}.
\end{align}
The last inequality can be found in \cite[Theorem 3.2.1]{chung2001course}.
Since,  $\Ew{\tau^p}  < \infty$ it follows from the first Borel-Cantelli lemma that $\tau(n^{\frac{1}{p}}) \le \varepsilon n^{\frac{1}{p}} $
for all $n\ge N(\varepsilon)$ with a sample path dependent constant $N(\varepsilon) \ge 0.$
In other words,
\begin{equation}
    \label{eq:powers}
    \tau(n) \le \varepsilon n , \qquad \text{for } n = k^{\frac{1}{p}} \text{ with} k\ge N(\varepsilon).
\end{equation}
Next, consider two subsequent integers $n'$ and $n''$ that satisfy \eqref{eq:powers}.
Specifically, let $k\ge N(\varepsilon) $ and fix $n' = k^{\frac{1}{p}}$ and $n'' = (k+1)^{\frac{1}{p}}$.
By the unit growth of the AoIP it follows that the AoI of the time steps between $n'$ and $n''$ satisfy
\begin{equation}
\label{eq:bound_1}
    \tau(n'+i) \le \tau(n') +i \le  \varepsilon n' + i 
\end{equation}
for every $i \in \{0, \ldots, n'' - n'\}$, Further, $i \le C k^{\frac{1}{p} - 1}$ for a constant $C >0$ that only depends on $p$.
Since $k = (n')^p$, it follows that
\begin{equation}
\label{eq:bound_2}
     i \le  C  (n')^{1-p} \le  C  (n'+i)^{1-p} 
\end{equation}
By combining \eqref{eq:bound_1} and \eqref{eq:bound_2} it follows that
\begin{align}
    \tau(n'+i) \le \varepsilon n' + i  
    = \varepsilon (n' + i) + (1-\varepsilon) i \le  \varepsilon(n' + i) + (1-\varepsilon) C (n'+i)^{1-p} 
\end{align}
Since this holds for all pairs $n'$, $n''$ with $i \in \{0, \ldots, n'' - n'\}$, it follows that
\begin{equation}
    \tau(n) \le \varepsilon n + (1-\varepsilon)  C n^{1-p}
\end{equation}
for all $n \ge N(\varepsilon)^{\frac{1}{p}}$. We have therefore shown that
\begin{equation}
    \label{eq: initial_limsup}
    \Pr{\tau(n) > \varepsilon n + (1-\varepsilon)C n^{1-p} \io } = 0.
\end{equation}
for every $\varepsilon \in (0,1)$.
Now fix $\varepsilon' \in (0,1)$ and let $\varepsilon = \frac{1}{2} \varepsilon'$.
Then $(1-\varepsilon)C n^{1-p} \le \varepsilon n$ for all $n \ge N$ for some $N \in \N$. Hence,
\begin{equation}
    \{ \tau(n) > \varepsilon n + (1-\varepsilon)C n^{1-p} \} \supset \{ \tau(n) > 2\varepsilon n\}
\end{equation}
for $n \ge N$.
By definition of the limit supremum, it then follows from \eqref{eq: initial_limsup} that
\begin{align}
    \Pr{\tau(n) > \varepsilon'n \io } 
    &\le \Pr{\tau(n) > \varepsilon n + (1-\varepsilon)C n^{1-p} \io } = 0.
\end{align}
\end{proof}

\begin{proof}[Proof of case $p>1$ in \Cref{lem: step_size_AoI_sum}]
W.l.o.g. consider $p \in (1,2)$ and let $a(n) = a n^{-\frac{1}{p}}$.
We have
\begin{align}
    \sum_{k=n-\tau(n)}^{n} a(k) &\le  a(n-\tau(n))^{-\frac{1}{p}} + a \int_{n-\tau(n)}^n t^{-\frac{1}{p}} dt \\ &= a(n-\tau(n))^{-\frac{1}{p}} + \frac{a}{\left(1-\frac{1}{p}\right)} \left( n^{1 -\frac{1}{p}} - (n-\tau(n))^{1-\frac{1}{p}} \right) \\
    &= a(n-\tau(n))^{-\frac{1}{p}} + \frac{a}{\left(1-\frac{1}{p}\right)} n^{1 -\frac{1}{p}}\left(1  - \left(1-\frac{\tau(n)}{n}\right)^{1-\frac{1}{p}} \right) \label{eq: sum_bound}
\end{align}
Bernoulli's inequality for real negative exponents \cite{wiki:Bernoulli's_inequality} shows that
\begin{align}
    \left(1-\frac{\tau(n)}{n}\right)^{1-\frac{1}{p}} &= \left(1-\frac{\tau(n)}{n}\right)  \left(1-\frac{\tau(n)}{n}\right)^{-\frac{1}{p}} \\ &\ge \left(1-\frac{\tau(n)}{n}\right)\left(1+ \frac{1}{p}\frac{\tau(n)}{n}\right) \ge \left( 1-\left(1-\frac{1}{p}\right)\frac{\tau(n)}{n} \right) \label{eq: bernoulli}
\end{align}
Hence, \eqref{eq: sum_bound} and \eqref{eq: bernoulli} show that
\begin{equation}
    \sum_{k=n-\tau(n)}^{n} a(k) \le a(n-\tau(n))^{-\frac{1}{p}} + an^{-\frac{1}{p}}\tau(n)
\end{equation}
Finally, by (A1) there is a random variable $\overline{\tau}$ with $\tau(n) \lest \overline{\tau}$ and $\Ew{\overline{\tau}^p}< \infty$. \Cref{lem: aoi_bound} then shows $\Pr{n^{-\frac{1}{p}} \tau(n) > \varepsilon  \io} = 0$ for every $\varepsilon \in (0,1)$. It therefore follows from Lemma used to proof the case $p\in (0,1]$ that $n^{-\frac{1}{p}} \tau(n) \to 0 \as$
\end{proof}

\begin{proof}[Proof of \Cref{lem: new_gronwall}]
    Let $n\ge 0$, then for all $m\le n$,
    \begin{equation}
        y_m \le c_n B + C \sum_{k=0}^{n-1} a_k y_k. 
    \end{equation}
    The traditional discrete Gronwall inequality,  \Cref{gronwall_ineq}, thus implies that
    \begin{equation}
    \label{eq: initial_gronwall}
        y_n \le c_n B e^{C T(n)}.
    \end{equation}
    Since $\sum_{k=n - \tau_n}^{n-1} a_k \to 0$ it follows that $N < \infty$. It now follows that
    \begin{align}
        y_{N+1} &\le c_{N+1}b_{N+1} + C \sum_{k=N + 1 - \tau_{N+1}}^{N} a_k y_k \\
        &\le c_{N+1}b_{N+1} +  c_N B e^{C T(N)} C \sum_{k=N + 1 - \tau_{N+1}}^{N} a_k \\
        &\le c_{N+1}b_{N+1} + c_N B(e^{CT(N)} - 1) \le c_{N+1} Be^{CT(N)}.
    \end{align}
    The second inequality uses \eqref{eq: initial_gronwall} and that both $c_n$ and $T(n)$ are increasing. The third inequality applies the definition of $N$. The last inequality uses again that $c_n$ is increasing and that $b_{N+1} \le B$.
    It now follows by induction that 
    \begin{equation}
        y_n \le c_n B e^{CT(N)} 
    \end{equation}
    for all $n\ge 0$. By using this inequality in the initial inequality we obtain
    \begin{align}
        y_n &\le b_nc_n + C \sum_{k=n - \tau_n}^{n-1} a_k y_k, \\
        &\le b_n c_n + C \sum_{k=n - \tau_n}^{n-1} a_k  c_k B e^{CT(N)} \\
        &\le c_n \left( b_n + B C e^{C T(N)} \left(\sum_{k=n - \tau_n}^{n-1} a_k \right) \right),  
    \end{align}
    which proves the desired inequality. 
\end{proof}

\begin{proof}[Proof of \Cref{lem: lipschitz_in_mean}]
    Let $x',x'' \in \R^d$. Since $f$ is twice differentiable in $x$ by \Cref{asm: SGD}.1, we can apply the mean value theorem (MVT) for vector-valued multivariate functions to the first coordinate of $f$, \cite{mcleod1965mean}. This yields
    \begin{equation}
    \label{eq: mean_value}
        \nabla_x f(x'; y) - \nabla_x f(x''; y) = \mathbf{A}(x',x'',y) \left( x' - x''\right) ,
    \end{equation}
    for $\P$-almost all $y$ with a matrix 
        $\mathbf{A}(x',x'',y) \in \convh \{ \mathbf{H}_x f(x,y) \mid x \in \overline{x'x''} \},$
    i.e., $\mathbf{A}(x',x'',y)$ is a convex combination of $\mathbf{H}_xf(x,y)$ evaluated along the line $\overline{x'x''}$.
    Now take the expected value and norm on both sides of \eqref{eq: mean_value}, then
    \begin{equation}
        \norm{\Ew{\nabla_x f(x'; y)} - \Ew{\nabla_x f(x''; y)}} \le  \norm{\Ew{\textbf{A}(x',x'',y)}}\norm{x - x'},
    \end{equation}
    Finally, apply \Cref{asm: SGD}.2 to conclude that
    \begin{equation}
        \label{eq: Lip_2_ineq}
        \norm{\Ew{\nabla_x f(x'; y)}  - \Ew{\nabla_x f(x''; y) }}  \le   L \norm{x' - x''}.
    \end{equation}
\end{proof}

\begin{proof}[Proof of \Cref{lem: scaling_limit}]
Consider a scaling sequence $c_m \ge 1$ with $c_m \nearrow \infty$.
The Lipschitz continuity of $h(x)$ yields that $\{ h_{c_m}(x) : n\ge 1\}$ is an equicontinuous, pointwise bounded family of continuous functions. A general version of the Arzelà-Ascoli theorem, see e.g. \cite{billingsley2013convergence}, now yields that the family is relatively compact in the subspace consisting of continuous functions, equipped with the topology of compact convergence. In other words, there exists a subsequence $m(n)$, such that $h_{c_{m(n)}}(x)$ convergence compactly to some limit $h_\infty(x)$. Define $c_n \coloneqq c_{m(n)}$. It is left to show that $\dot{x}(t) = h_\infty(x(t))$ has the origin as its unique globally asymptotically stable equilibrium.

Let $x, z \in \R^d$. Similar to \Cref{lem: lipschitz_in_mean}, the mean value theorem yields that
    \begin{equation}
        \frac{\nabla_x F(c_n x)}{c_n} - \frac{\nabla_x F(z)}{c_n} = \mathbf{A}(c_n x,z) \left( x - \frac{z}{c_n}\right) ,
    \end{equation}
    with 
        $\mathbf{A}(x',x'') \in \convh \{ \Ew{\mathbf{H}_x f(x,y)} \mid x \in \overline{x'x''} \},$ where we already exchanged the order of the convex combination and the expectation.
Thus, 
\begin{equation}
    \label{eq: limit_h_inf}
    h_\infty(x) \coloneqq - \lim\limits_{n\to \infty} \mathbf{A}(c_n x,z) x
\end{equation}
for every $z\in \R^d$. \Cref{asm: SGD2} now requires that
\begin{equation}
    \sup \{ \inf  \{ \lambda_{\min} \left(\Ew{ \mathbf{H}_x f(x;\xi)} \right) : \norm{x} > r \}: r>0\}   > 0,
\end{equation}
which implies that there is some $r>0$ and some $\varepsilon>0$, such that $\lambda_{\min} \left(\Ew{ \mathbf{H}_x f(x;\xi)} \right) > \varepsilon$ for $\norm{x} > r$. Thus for all $\norm{z} > r$ and $\norm{c_n x} > r$ it follows that $\mathbf{A}(c_n x,z)$ is a convex combination of positive definite matrices with the smallest eigenvalue greater than $\varepsilon$. Elementary properties of positive definite matrices thus yield that that $\lambda_{\min} \left(\mathbf{A}(c_n x,z) \right) > \varepsilon $ for $\norm{z} > r$ and $\norm{c_n x} > r$. Finally, we conclude from \eqref{eq: limit_h_inf} that for every $x\in \R^d$, $h_\infty(x) = - \lim\limits_{n\to \infty} \mathbf{A}(x) x$ for some matrix $\mathbf{A}(x) \in \R^{d\times d}$ with $\lambda_{\min} \left(\mathbf{A}(x) \right) > 0 $. Thus $\dot{x} (t) = h_\infty(x(t))$ is globally asymptotically stable to the origin by LaSalle's invariance principle.
\end{proof} 

\begin{proof}[Proof of \Cref{lem: momentum_expgrowth}]
    Recall that the heavy ball iteration is given by
    \begin{equation}
        x_{n+1} = x_n + a(n) (1-\beta)\left[\sum_{i=1}^n \beta^{n-i} g(x_i) \right],
    \end{equation}
    with $g(x_n) \coloneqq h(x_n) + M_{n+1} $.
    Then using Gronwalls inequality \Cref{gronwall_ineq}, $a(n) \in \cO(\frac{1}{n})$ and $\Ew{\norm{M_{n+1}}^2} \le K^2(1+ \Ew{\norm{x_n}^2})$, it follows that 
    \begin{equation}
        \Ew{\norm{x_n}^2} \le C n
    \end{equation}
    for some constant $C>0$. With this, it follows from the martingale convergence theorem \Cref{martingale_conv} that
    $\sum_{n=1}a(n) \frac{M_{n+1}}{n}$ converges almost surely. Since $\sum_{n\ge 1} a(n) = \infty$, it follows that $M_{n+1} \in o(n)$.
    The lemma now follows from
\begin{align}
    \norm{\delta_n} &\le \frac{(1-\beta)}{c(n)}  \sum_{i=1}^{n-\tau(n)-1} \beta^{n-i}\norm{g(x_i)} 
     \in (1-\beta) o\left( \frac{(1-\beta)}{c(n)}  \sum_{i=1}^{n-\tau(n)-1} \beta^{n-i} n \right) = o(1).
\end{align}
\end{proof}


\section*{Acknowledgments.}
Adrian Redder was supported by the German Research Foundation - SFB901.
An important part of this work was done while Adrian Redder visited Arunselvan Ramaswamy at Karlstad University.

\bibliographystyle{apalike}
\bibliography{references}

\end{document}